\theoremstyle{plain}
\newtheorem{thm}{Theorem}[section]
\newtheorem{lem}[thm]{Lemma}
\newtheorem{cor}[thm]{Corollary}
\newtheorem{prop}[thm]{Proposition}
\theoremstyle{definition}
\newtheorem{rem}[thm]{Remark}
\newtheorem{defn}[thm]{Definition}
\newtheorem{ex}[thm]{Example}
\numberwithin{equation}{section}
\def\F{{\mathbb F}}
\def\Q{{\mathbb Q}}
\def\R{{\mathbb R}}
\def\Z{{\mathbb Z}}
\def\C{{\mathbb C}}
\def\P{{\mathbb P}}
\def\id{\mathop{\mathrm{id}}\nolimits}
\def\Ker{\mathop{\mathrm{Ker}}\nolimits}
\def\GL{\mathop{\mathrm{GL}}\nolimits}
\def\Tr{\mathop{\mathrm{Tr}}\nolimits}
\def\det{\mathop{\mathrm{det}}\nolimits}
\def\dim{\mathop{\mathrm{dim}}\nolimits}
\def\div{\mathop{\mathrm{div}}\nolimits}
\def\Stab{\mathop{\mathrm{Stab}}\nolimits}
\def\L{\mathscr{L}}
\def\F{\mathscr{F}}
\def\W{\mathscr{W}}
\def\OO{\mathscr{O}}
\def\V{\mathscr{V}}
\def\D{\mathscr{D}}
\def\l{\langle}
\def\r{\rangle}
\def\exp{\mathop{\mathrm{exp}}\nolimits}
\def\SO{\mathop{\mathrm{SO}}\nolimits}
\def\W{W}
\def\ZZ{Z}
\def\U{\mathrm{U}}
\def\O{\mathrm{O}}
\def\V{V}
\def\T{T}
\def\CC{C}
\newcommand{\defeq}{\vcentcolon=}
\begin{document}

\title[Irregular cusps of ball quotients]
{Irregular cusps of ball quotients}

\author{Yota Maeda}
\address{Sony Group Corporation, 1-7-1 Konan, Minato-ku, Tokyo, 108-0075, Japan/Department of Mathematics, Faculty of Science, Kyoto University, Kyoto 606-8502, Japan}
\email{y.maeda@math.kyoto-u.ac.jp}


\subjclass[2010]{}
\keywords{ball quotient, toroidal compactification, Kodaira dimension, modular form, Hermitian form}

\date{\today}

\maketitle

\begin{abstract}
We study the branch divisors on the boundary of the canonical toroidal compactification of ball quotients.
We show a criterion, the low slope cusp form trick, for proving that ball quotients are of general type.
Moreover, we classify when irregular cusps exist in the case of the discriminant kernel and construct concrete examples for some arithmetic subgroups.
As another direction of study, when a complex ball is embedded into a Hermitian symmetric domain of type IV, we determine when regular or irregular cusps map to regular or irregular cusps studied by Ma.
\end{abstract}

\section{Introduction}
When calculating the order of modular forms on modular curves at cusps, we need to consider whether the cusp is regular or not.
If it is irregular, then the order of the modular forms is defined as half the order determined by its Fourier expansion at the cusp.
More precisely, irregular cusps of modular curves are cusps whose width are strictly smaller than the period for Fourier expansion; this is explained in detail in \cite{DS}.
In the case of orthogonal modular varieties, Ma \cite{irregular} defined and studied irregular cusps.
He classified the structures of discriminant groups for the case of discriminant kernel when irregular cusps may exist on the orthogonal modular varieties and constructed examples.
Finally, he proved the low slope cusp form trick, which is a modification of the low weight cusp form trick \cite[Theorem 1.1]{GHS} when the irregular cusps arise, and used it to show that some orthogonal modular varieties are of general type.

In this paper, we work on ball quotients.
First, we define irregular cusps on them.
Unlike the case of orthogonal modular varieties, in our situation, there may exist branch divisors with branch index 2,3,4 or 6 as explained in section \ref{section:irregular}.
Considering the effects of these cusps, as a main result, we give a sufficient condition for a ball quotient to be of general type in terms of modular forms, called the low slope cusp form trick.
On the other hand, we shall give an example of a ball quotient of non-negative Kodaira dimension in section \ref{section:non_negative}.
This is done by constructing a cusp form, satisfying a weaker condition appearing in this trick.
Second, we consider the relationship between regular/irregular cusps on ball quotients and regular/irregular cusps on orthogonal modular varieties when a Hermitian symmetric domain of type I is embedded into one of type IV.
In this situation, we prove that regular cusps map to regular cusps and determine whether irregular cusps map to regular or irregular cusps.
Third, we classify the structures of the discriminant group when the discriminant kernel may have irregular cusps in section \ref{discriminant_kernel_case} and Appendix \ref{app:A}.
Finally, we construct concrete examples of irregular cusps of any index for any imaginary quadratic field with class number 1 in section \ref{section:examples}.

Before stating our results, we should summarize our settings.
Let $F$ be an imaginary quadratic field and $\OO_F$ be its ring of integers.
Let $(L, \l\ ,\ \r)$ be a Hermitian lattice of signature $(1,n)$ over $\OO_F$ with $n>1$ and $\U(L)$ be the associated unitary group scheme over $\Z$.
Then, the Hermitian symmetric domain associated with the unitary group $\U(L)(\R)\cong\U(1,n)$ is defined as
\[D_L\defeq\{v\in\mathbb{P}(L\otimes_{\OO_F}\C)\mid \l v,v \r>0\}\]
which is an $n$-dimensional complex ball.
For a finite index subgroup $\Gamma\subset\U(L)(\Z)$, we define the \textit{ball quotient}:
\[\F_L(\Gamma)\defeq D_L/\Gamma.\]

On the other hand, we define the associated quadratic lattice $(L_Q, (\ ,\ ))$ over $\Z$ of signature $(2,2n)$, where $L_Q\defeq L$ as a $\Z$-module and $(\ ,\ )\defeq\Tr_{F/\Q}\l\ ,\ \r$.
Let $\D_{L_Q}$ be the Hermitian symmetric domain associated with $\O^+(L_Q)(\R)\cong\O^+(2,2n)$:
\[\D_{L_Q}\defeq\{v\in\mathbb{P}(L_{Q}\otimes_{\Z}\C)\mid (v,v)=0, (v,\overline{v})>0\}^+.\]
Then, we obtain embeddings $\U(L)\hookrightarrow\SO^+(L_Q)$ and $D_L\hookrightarrow \D_{L_Q}$, as was studied in \cite{Hofmann}.

Now, let us introduce the notion of irregular cusps.
Let $I$ be a rank 1 primitive isotropic sublattice of $L$ and $\Gamma(I)_{\Q}$ be the stabilizer of $I\otimes_{\OO_F} F$.
We denote by $\W(I)_{\Q}$ its unipotent part and $\ZZ(I)_{\Q}$ the center of $\W(I)_{\Q}$.
We say $I$ is \textit{irregular with} $($\textit{at least}$)$ \textit{index $2$} if $\ZZ(I)_{\Q}\cap\Gamma\neq\ZZ(I)_{\Q}\cap\l\Gamma,-\id\r$ holds.
We have to consider whether the cusp corresponding to $I$ branches with  higher index or not for $F=\Q(\sqrt{-1})$ or $\Q(\sqrt{-3})$, but for simplicity, we only concern ourselves with this case here.
At irregular cusps, we have to pay attention to the vanishing order of modular forms and related pluricanonical forms.

Here, we shall state our main result, which is a unitary analog of \cite[Theorem 1.1]{GHS} or \cite[Theorem 8.9]{irregular}.

\begin{thm}[Low slope cusp form trick, Theorem \ref{low_slope_trick_unitary}]
Let $F$ be an imaginary quadratic field and $L$ be a Hermitian lattice of signature $(1,n)$ over $\OO_F$.
For a finite index subgroup $\Gamma\subset\U(L)(\Z)$, we assume that there is a non-zero cusp form $\Psi$ of weight $k$ with respect to $\Gamma$ on $D_L$.
In addition, we make the following assumptions.
\begin{enumerate}
    \item $v_R(\Psi)/k>(r_i-1)/(n+1)$ for every irreducible component $R_i$ of the ramification divisors $D_L\to\F_L(\Gamma)$ with ramification index $r_i$.
    \item $v_I(\Psi)/k>1/(n+1)$ for every regular isotropic sublattice $I\subset L$.
    \item $v_I(\Psi)/k>m_I/(n+1)$ for every $\mathrm{(}$semi-$\mathrm{)}$irregular isotropic sublattice $I\subset L$ with index $m_I$.
    \item $n\geq\mathrm{max}_{i,I}\{r_i-2,m_I-1\}$
    \item $\overline{\F_L(\Gamma)}$ has at worst canonical singularities.
\end{enumerate}
Then the ball quotient $\F_L(\Gamma)$ is of general type.
\end{thm}

\begin{rem}
Assumptions (4) and (5) are satisfied if $n\geq 13$ and $d<-3$ by \cite[Theorem 4]{Behrens}.
\end{rem}
Here, $\overline{\F_L(\Gamma)}$ is the canonical toroidal compactification of $\F_L(\Gamma)$.
For the notion of ``semi-irregular'', see section \ref{section:irregular}.

We also consider the relationship between regular/irregular cusps on $D_L$ and regular/irregular cusps on $\D_{L_Q}$.
Note that irregular cusps on $\D_{L_Q}$ have been studied by Ma \cite{irregular}.
Let $\Gamma_O\subset\O^+(L_Q)(\Z)$ be a finite index subgroup and $\Gamma_U\subset\U(L)(\Z)$ be its restriction to the unitary group.
In the following proposition, regular/irregular cusps on $D_L$ (resp. $\D_{L_Q}$) mean regular/irregular cusps with respect to $\Gamma_U$ (resp. $\Gamma_O$).
\begin{prop}
\label{unitary_to_orthogonal}
\begin{enumerate}
    \item For any imaginary quadratic field $F$, regular cusps on $D_L$ map to regular cusps on $\D_{L_Q}$.
    \item For $F\neq\Q(\sqrt{-1}), \Q(\sqrt{-3})$, irregular cusps on $D_L$ map to irregular cusps on $\D_{L_Q}$.
    \item For $F=\Q(\sqrt{-1})$, irregular cusps with index $2$ or $4$ on $D_L$ map to irregular cusps with index $2$ on $\D_{L_Q}$ and semi-irregular cusps with index $2$ on $D_L$ map to regular cusps on $\D_{L_Q}$.
    \item For $F=\Q(\sqrt{-3})$, irregular cusps with index $2$ or $6$ and semi-irregular cusps with index $2$ on $D_L$ map to irregular cusps with index $2$ on $\D_{L_Q}$ and irregular cusps with index $3$ and semi-irregular cusps with index $3$ on $D_L$ map to regular cusps on $\D_{L_Q}$.
\end{enumerate}

\end{prop}

For the case of discriminant kernel, we completely classify discriminant groups when the lattice may have irregular cusps.
\begin{prop}
If $F$ is an imaginary quadratic field of class number $1$, and the discriminant kernel of a unitary group has an irregular cusp, then the discriminant group of an even Hermitian lattice is one of those listed in Appendix  \ref{app:A}.
\end{prop}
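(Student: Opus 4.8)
My plan is to feed the structure theory of the finite $\OO_F$-module $L^\vee/L$ into the intrinsic irregularity criterion of Section~\ref{section:irregular}. Write $\Gamma$ for the discriminant kernel, i.e. the subgroup of $\U(L)(\Z)$ acting trivially on $L^\vee/L$, and fix a primitive isotropic $I=\OO_F e$. I would first translate the defining condition $U(I)_\Q\cap\Gamma\neq U(I)_\Q\cap\l\Gamma,-\id\r$, together with its higher-index refinements for $F=\Q(\sqrt{-1})$ and $\Q(\sqrt{-3})$, into the following lattice statement: the cusp at $I$ is irregular of index $m_I$ exactly when there is a torsion unit $\zeta\in\OO_F^\times$ of order $m_I$ whose associated automorphism of $L$—scaling the hyperbolic pair $\OO_F e\oplus\OO_F f$ by $\zeta$ for some $f$ with $\l e,f\r\neq0$ and acting compatibly on its orthogonal complement $L_0$—lies in $\Gamma$, i.e. acts trivially on $L^\vee/L$, while acting nontrivially on the cusp neighbourhood. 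All subsequent work is an analysis of when such a $\zeta$ can act trivially on the discriminant group.

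Next I would localise. Since each $F$ under consideration has class number one, $\OO_F$ is a principal ideal domain and $L^\vee/L=\bigoplus_{\mathfrak p}(L^\vee/L)_{\mathfrak p}$ decomposes into $\mathfrak p$-primary summands, each a finite direct sum of cyclic modules $\OO_F/\mathfrak p^{k}$. The requirement that $\zeta$ act trivially on a nonzero primary summand forces $\zeta\equiv1\pmod{\mathfrak p}$ there, so the admissible pairs $(\zeta,\mathfrak p)$ are governed by the factorisation of $\zeta-1$. Computing $N_{F/\Q}(\zeta-1)$ over the finitely many units isolates them: $\zeta=-1$ is available at every prime above $2$ and yields index $2$ for all fields; for $F=\Q(\sqrt{-1})$ the unit $i$ gives the ramified prime $(1-i)$ above $2$ and index $4$; for $F=\Q(\sqrt{-3})$ the unit $\omega$ gives the ramified prime $(\sqrt{-3})$ above $3$ and index $3$, while the joint contribution of the $2$- and $3$-primary parts produces index $6$. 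Every remaining unit has $N_{F/\Q}(\zeta-1)=1$ and is therefore excluded, which is precisely why no other indices occur.

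With the admissible $(\zeta,\mathfrak p)$ fixed, I would read off the shape of $(L^\vee/L)_{\mathfrak p}$ from the triviality of the $\zeta$-action together with the explicit transvection formula $u_\lambda\colon x\mapsto x+\lambda\l x,e\r e$. Because the operator induced by $u_\lambda-\id$ has image contained in the cyclic submodule generated by the classes $\lambda\l L^\vee,e\r e$, matching it against the constraint imposed by $\zeta$ forces $(\zeta-1)(L^\vee/L)_{\mathfrak p}$ to be cyclic and pins down the value of the Hermitian discriminant form on a generator; this in turn limits $(L^\vee/L)_{\mathfrak p}$ to be $(\zeta-1)$-elementary of the appropriate type, possibly extended by a controlled cyclic part. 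Letting $\lambda$ range over the full lattice of integral transvections $U(I)_\Q\cap\U(L)(\Z)$ shows these modules genuinely realise the scalar, so the conditions are sharp, and tracking whether $\zeta$ is realised outright or only after enlarging $\Gamma$ by $-\id$ is exactly what separates the irregular from the special-irregular cusps.

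I expect the main obstacle to be the $2$-adic, and for $\Q(\sqrt{-3})$ also the $3$-adic, bookkeeping of the preceding paragraph: matching the rank-one transvection image against $(\zeta-1)\cdot\id$ is immediate when $\zeta-1$ is a uniformiser, but at the ramified primes and for higher $\mathfrak p$-powers one must rule out extra cyclic summands and exhaust the finitely many Jordan constituents of the Hermitian discriminant form. Once this local classification is complete, it remains to check that the signature $(1,n)$ imposes nothing beyond a length bound on $L^\vee/L$, and then to verify that the resulting finite list of primary types coincides entry-by-entry with the tables of Appendix~\ref{app:A}.
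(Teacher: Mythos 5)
Your second and third paragraphs head in the same general direction as the paper's Section~\ref{discriminant_kernel_case} (a rank-one transvection correction, an ``elementary part plus controlled cyclic extension'' shape fed into a PID structure result like Proposition~\ref{decomp}), but your opening translation is wrong, and the error propagates into the localisation step. By Propositions~\ref{sp-2-irreg}--\ref{2-irreg}, the witness of irregularity of index $m_I$ is $\zeta T_{\sqrt{d}\lambda(e\otimes e)}$ with $\zeta\in\OO_F^{\times}$ torsion and $\lambda\in\Q$: a torsion scalar times an Eichler transvection along $e$, whose image in $\U(I^{\perp}/I_F)\times\GL(I_F)$ is $(\zeta\id,\zeta\id)$. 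This is never your semisimple ``scaling of a hyperbolic pair'' (its $\ord(\zeta)$-th power is the nontrivial transvection $T_{\ord(\zeta)\sqrt{d}\lambda(e\otimes e)}$, and a hyperbolic scaling fixing $L_0$ would have image $(\id,\zeta\id)$ instead), and if you instead read your criterion as $\zeta\id\in\Gamma$ you land in the regular types $R_m$, the opposite of what you want. Worse, since it is this witness, not $\zeta$ itself, that acts trivially on $A_L$, the constraint $\zeta\equiv 1\pmod{\mathfrak{p}}$ is forced only on the image of $I^{\perp}\cap L^{\vee}$ in $A_L$; on classes pairing nontrivially with $e$, such as $e'/\div(I)$, the transvection term $2\sqrt{d}\lambda\langle e',e\rangle e$ cancels the scalar. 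Concretely, for $F=\Q(\sqrt{-3})$ and index $2$ (so $\zeta=-1$, $\zeta-1=-2$), Appendix~\ref{app:-3} lists $A_L\cong(\OO_F/2)^a\times\OO_F/(\sqrt{-3})^2$, a nonzero $3$-primary summand at a prime not dividing $\zeta-1$, which your rule would forbid. What replaces that rule in the paper is bookkeeping in the invariant $\div(I)$, a generator of the ideal $\langle L,e\rangle$, which your plan never introduces: the necessary divisibilities $2/\div(I)$, $(1\pm\sqrt{-1})/\div(I)$, $(1\pm\omega)/\div(I)\in\OO_F$ of Lemma~\ref{discriminant_kernel2}, the converse Lemma~\ref{discriminant_kernel3} (via the integrality Lemma~\ref{discriminant_kernel1}) that prunes candidates which are in fact regular, the finite and partly computer-assisted enumeration of the pairs $(a,b)$ in Propositions~\ref{-1_ab_kouho}, \ref{-3_ab_kouho}, \ref{d_ab_kouho}, \ref{-2_ab_kouho} and their tables, and only then Proposition~\ref{decomp} converting each admissible $\div(I)$ into the module lists. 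That enumeration is the computational heart of the statement, and nothing in your sketch supplies it.

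Your norm test is also internally inconsistent exactly at index $6$: for the order-$6$ unit $\zeta=-\omega$ one has $\zeta-1=-(1+\omega)=\omega^2$, a unit of norm $1$, so your stated exclusion rule (``$N_{F/\Q}(\zeta-1)=1$, therefore excluded'') discards precisely the unit responsible for index-$6$ cusps (Proposition~\ref{6-irreg}). The patch you offer, a ``joint contribution of the $2$- and $3$-primary parts,'' misidentifies the mechanism: index $6$ comes from the single witness $-\omega T_{\sqrt{-3}\lambda(e\otimes e)}$, and the unit-ness of $1+\omega$ is exactly why Lemma~\ref{discriminant_kernel2}(3) forces $\div(I)$ to be essentially a unit and the index-$6$ lists of Appendix~\ref{app:-3} to be nearly trivial, rather than producing interacting $2$- and $3$-torsion. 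Finally, the irregular/special-irregular dichotomy is finer than ``whether $\zeta$ is realised outright or after enlarging $\Gamma$ by $-\id$'': over $\Q(\sqrt{-3})$, type $SI_2$ requires $\omega\id\in\Gamma$ and $-\id\notin\Gamma$ (Proposition~\ref{sp-2-irreg'}) while $SI_3$ requires $-\id\in\Gamma$ and $\omega\id\notin\Gamma$ (Proposition~\ref{sp-3-irreg}), so one must track exactly which torsion scalars lie in $\Gamma$. Until the witness is identified correctly and a $\div(I)$-style enumeration is supplied, the concluding ``entry-by-entry'' comparison with Appendix~\ref{app:A} has no engine behind it.
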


\section{0-dimensional cusps}
Let $F\defeq\Q(\sqrt{d})$ be an imaginary quadratic field, where $d$ is a square-free negative integer.
Let $(L,\l\ ,\ \r)$ be a Hermitian lattice over $\OO_F$ of signature $(1,n)$, where $n>1$ and $V\defeq L\otimes_{\OO_F} F$. 
We will consider integral Hermitian lattices in the sense described in \cite{Hofmann}, that is, 
\[\l\ ,\ \r:L\times L\to \kappa\OO_F\]
where 
\[\kappa\defeq\begin{cases}
\frac{1}{\sqrt{d}}&(d\equiv 1\bmod 4)\\
\frac{1}{2\sqrt{d}}&(d\equiv 2,3\bmod 4).
\end{cases}\]
In this paper, Hermitian forms are complex linear in the first argument  and complex conjugate linear in the second argument.
We also define the dual lattice $L^{\vee}$ of $L$:
\[L^{\vee}\defeq\{v\in L\otimes_{\OO_F} F\mid \l v,w\r\in\kappa\OO_F\ \mathrm{for\ any\ }w\in L\}.\]
This lattice contains $L$ as a finite index lattice, so the \textit{discriminant group} $A_L\defeq L^{\vee}/L$ is a finite $\OO_F$-module.
As an important example of an arithmetic group, the \textit{discriminant kernel} $\widetilde{\U}(L)$ is defined by
\[\widetilde{\U}(L)\defeq\{g\in\U(L)(\Z)\mid g\vert_{A_L}=\id\}.\]

Now, let us recall the toroidal compactification of $\F_L(\Gamma)$ and its cusps.
For a rank 1 primitive isotropic sublattice   $I\subset L$, let $\Gamma(I)_{\Q}\defeq\Stab_{\U(L)(\Q)}(I_F)$ be the stabilizer of $I_F\defeq I\otimes_{\OO_F} F$.
Here, we review the structure of $\Gamma(I)_{\Q}$; see \cite{Behrens} and \cite{looijenga_ball} for details.
Let
\[\W(I)_{\Q}\defeq\Ker(\Gamma(I)_{\Q}\to\U(I^{\perp}/I_F)\times\GL(I_{F}))\]
be the unipotent radical of $\Gamma(I)_{\Q}$ and 
\[\ZZ(I)_{\Q}\defeq\Ker(\Gamma(I)_{\Q}\to\GL(I^{\perp}))\]
be its center.
We fix a generator $e$ of $I$.
By \cite{looijenga_ball}, we define 
\[T_{e\otimes v}(z)\defeq z+\l z,e\r v-\l z,v\r e-\frac{1}{2}\l v,v\r\l z,e\r e\]
for $v\in I^{\perp}$ and $z\in V$.
Then, the following properties hold:
\begin{align*}
\left\{
\begin{array}{ll}
T_{e\otimes\mu v} &= T_{\overline{\mu}e\otimes v}\ (\mu\in F) \\
T_{e\otimes\lambda e} &= \id_V\ (\lambda\in\Q)\\
T_{e\otimes v}T_{e\otimes u}&=T_{e\otimes (v+u+\frac{1}{2}\l v,u\r e)}.
\end{array}
\right.
\end{align*}
Thus, it follows that $T_{e\otimes v}$ depends only on $\overline{I}_F\otimes I^{\perp}/(\overline{I}\otimes I)(\Q)$.
Here,
\[(\overline{I}\otimes I)(\Q)\defeq\{\lambda (e\otimes e)\mid\lambda\in\Q\}.\]
From the definition of $T_{e\otimes v}$, it follows $T_{e\otimes v}=\id_{I^{\perp}}$ for $e\otimes v\in\overline{I}_{F}\otimes I_{F}$ so that 
\begin{equation}
\label{unipotent_isom}
\begin{array}{ccc}
    \overline{I}_F\otimes I_F/(\overline{I}\otimes I)(\Q)=\sqrt{d}(\overline{I}\otimes I)(\Q) &\cong& \ZZ(I)_{\Q} \\
    \sqrt{d}\lambda (e\otimes e)&\mapsto &T_{\sqrt{d}\lambda (e\otimes e)}.
\end{array}
\end{equation}
More directly, by choosing a basis $\{e,b_1,\dots,b_{n-1},e'\}$ of $V$ such that $\{e,b_1,\dots,b_{n-1}\}$ is a basis of  $I^{\perp}$ and $\l e,e'\r=1$, the Hermitian form is given by 
\[\left(
\begin{array}{c|c|c}
  0 &0&1\\
\hline
0&B&0\\
\hline
1& 0 & 0
\end{array}
\right)\]
for some Hermitian matrix $B$, and the center of $\W(I)_{\Q}$ is given by 
\[\ZZ(I)_{\Q}=\left\{\left(
\begin{array}{c|c|c}
  1&0&\lambda\sqrt{d}\\
\hline
0&I_{n-1}&0\\
\hline
0&0 & 1
\end{array}
\right)\middle|\ \lambda\in \Q\right\}.\]
This gives the isomorphism (\ref{unipotent_isom}) more explicitly, 
\[
\begin{array}{ccc}
    \overline{I}_F\otimes I_F/(\overline{I}\otimes I)(\Q)=\sqrt{d}(\overline{I}\otimes I)(\Q) &\cong& \ZZ(I)_{\Q} \\
    \sqrt{d}\lambda (e\otimes e)&\mapsto &\left(
\begin{array}{c|c|c}
  1&0&2\lambda\sqrt{d}\\
\hline
0&I_{n-1}&0\\
\hline
0&0 & 1
\end{array}
\right).
\end{array}
\]
(See \cite[Lemma 12]{Behrens} for a description.)
Now, $\Gamma(I)_{\Q}$ acts on both sides of the equation.
The natural action on the left-hand side coincides with the adjoint action on the right-hand side.
\[T_{\sqrt{d}\lambda\gamma(e\otimes e)}=\gamma^{-1}T_{\sqrt{d}\lambda(e\otimes e)}\gamma\quad (\gamma\in\Gamma(I)_{\Q}).\]
We also have the following isomorphism,
\[\V(I)_{\Q}\cong\overline{I}_F\otimes I^{\perp}/I_F\]
by \cite{looijenga_ball}.
Here, $\V(I)_{\Q}$ is defined in (\ref{exact3}).
For a finite index subgroup $\Gamma\subset\U(L)(\Z)$, we introduce the following notation from \cite{AMRT} and \cite{irregular}:
\[
\Gamma(I)_{\Z}\defeq\Gamma(I)_{\Q}\cap\Gamma,\ \W(I)_{\Z}\defeq \W(I)_{\Q}\cap\Gamma,\ \ZZ(I)_{\Z}\defeq \ZZ(I)_{\Q}\cap\Gamma
\]
\[
\overline{\Gamma(I)}_{\Z}\defeq\Gamma(I)_{\Z}/\ZZ(I)_{\Z},\ \V(I)_{\Z}\defeq \W(I)_{\Z}/\ZZ(I)_{\Z},\ \Gamma_I\defeq\Gamma(I)_{\Z}/\W(I)_{\Z}
\]
\[
\overline{\Gamma(I)}_{\Q}\defeq\Gamma(I)_{\Q}/\ZZ(I)_{\Z},\ \W(I)_{\Q/\Z}\defeq \W(I)_{\Q}/\ZZ(I)_{\Z},\ \ZZ(I)_{\Q/\Z}\defeq \ZZ(I)_{\Q}/\ZZ(I)_{\Z}.
\]
Now we have the following exact sequences:
\begin{align}
\label{exact1}
    0\rightarrow \V(I)_{\Z}&\rightarrow\overline{\Gamma(I)}_{\Z}\rightarrow\Gamma_I\rightarrow 1\\\label{exact2}
    0\rightarrow \W(I)_{\Q/\Z}&\rightarrow\overline{\Gamma(I)}_{\Q}\rightarrow\U(I^{\perp}/I_{F})\times\GL(I_{F})\\\label{exact3}
    0\rightarrow \ZZ(I)_{\Q/\Z}&\rightarrow \W(I)_{\Q/\Z}\rightarrow \V(I)_{\Q}\rightarrow 0.
\end{align}
Note that  $\ZZ(I)_{\Q/\Z}$ is a torsion subgroup of $\T(I)\defeq \ZZ(I)_{\C}/\ZZ(I)_{\Z}$.
Let $c_I\defeq\P(I\otimes_{\OO_F}\C)$ be the cusp corresponding to $I$.
We need a representation of $D_L$ as a Siegel domain of the third kind.
We define $D(I)\defeq \ZZ(I)_{\C}D_L$.
Then, we obtain the following fibration by \cite{AMRT}:
\[
D(I)\cong \ZZ(I)_{\C}\times \V(I)_{\C}\times c_I  \stackrel{\pi_1}{\rightarrow}  D(I)'\defeq D(I)/\ZZ(I)_{\C} \stackrel{\pi_2}{\rightarrow} c_I.
\]
Moreover, from this fibration, we have
\[D_L=\{(z,u)\in D(I)\mid\Im(z)-h\l u,u\r\in \CC(I)\}\]
for a cone $\CC(I)$ in $\ZZ(I)_{\R}$ and some real-bilinear quadratic form $h:\C^{n-1}\times\C^{n-1}\to \ZZ(I)_{\R}$.
Accordingly, we have 
\[\mathcal{X}(I)\defeq D/\ZZ(I)_{\Z}\subset D(I)/\ZZ(I)_{\Z}\stackrel{\overline{\pi}_1}{\rightarrow}D(I)'.\]
Here, the quotient fiber bundle $\overline{\pi}_1$ is a principal fiber bundle under the algebraic torus $\T(I)\defeq \ZZ(I)_{\C}/\ZZ(I)_{\Z}$.
Since $\dim_{\R}(\ZZ(I)_{\R})=1$, there exists a natural toric embedding $\T(I)\hookrightarrow \overline{\T(I)}$.
In accordance with \cite{AMRT}, we define $\overline{\mathcal{X}(I)}$ as the interior of closure of $\mathcal{X}(I)$ in $\mathcal{X}(I)\times_{\T(I)}\overline{\T(I)}$.

Finally, the toroidal compactification of $\F_L(\Gamma)$ is defined by taking the canonical cone decomposition:
\[\overline{\F_L(\Gamma)}\defeq(D_L\cup\bigcup_{I\subset L}\overline{\mathcal{X}(I)})/\sim\]
where $I$ is a rank 1 primitive isotropic sublattice of $L$ and the equivalence relation is defined in \cite{AMRT}.
\begin{rem}
We can also construct the Satake-Baily-Borel compactification $\overline{\F_L(\Gamma)}^{\mathrm{SBB}}$ of a ball quotient  $\F_L(\Gamma)$ as follows.
We define the rational completion $D_L^{\mathrm{SBB}}$ as the union of $D_L$ and 0-dimensional cusps:
\[D_L^{\mathrm{SBB}}\defeq D_L\cup\bigcup_{I\subset L} c_I.\]
Here, $I\subset L$ runs over the rank 1 primitive isotropic sublattices.
Now we define $\overline{\F_L(\Gamma)}^{\mathrm{SBB}}\defeq D_L^{\mathrm{SBB}}/\Gamma$.
\end{rem}

\section{Irregular cusps}
\label{section:irregular}
\subsection{Case of $\Q(\sqrt{-1})$}
Throughout this subsection, we assume $F=\Q(\sqrt{-1})$.
Let us define irregular cusps.
\begin{prop}
\label{sp-2-irreg}
The following are equivalent.
\begin{enumerate}
\item $\ZZ(I)_{\Z}=\ZZ(I)_{\Q}\cap\l\Gamma,-\id\r\neq \ZZ(I)_{\Q}\cap\l\Gamma,\sqrt{-1}\id\r$.
\item $-\id\in\Gamma$, $\sqrt{-1}\id\not\in\Gamma$, and  $\sqrt{-1}T_{\sqrt{-1}\lambda (e\otimes e)}\in\Gamma(I)_{\Z}$ for some $\sqrt{-1}\lambda (e\otimes  e)\in\sqrt{-1}(\overline{I}\otimes I)(\Q)$.
\item $-\id\in\Gamma$, $\sqrt{-1}\id\not\in\Gamma$, and there exists an element $\gamma\in\overline{\Gamma(I)}_{\Z}$ of order $4$, acting on $\ZZ(I)_{\Z}$ and $\V(I)_{\C}$ trivially and $\mathcal{X}(I)$ non-trivially, and whose image in $\U(I^{\perp})\times\GL(I_F)$ is $(\sqrt{-1}\id_{I^{\perp}/I_F},\sqrt{-1}\id_{I_{F}})$.
Moreover, the order of this non-trivial action on $\mathcal{X}(I)$ is $2$.
\end{enumerate}
\end{prop}
\begin{proof}
$(1)\Rightarrow (2)$
Since $\sqrt{-1}\id\not\in\Gamma$, there exists an element  $T_{\sqrt{-1}\lambda (e\otimes e)}\in  \ZZ(I)_{\Q}\cap\l\Gamma,\sqrt{-1}\id\r\setminus \ZZ(I)_{\Z}$ for some $\sqrt{-1}\lambda (e\otimes e)\in\sqrt{-1}(\overline{I}\otimes I)(\Q)$.
Now $\l\Gamma,\sqrt{-1}\id\r=\Gamma\sqcup\sqrt{-1}\Gamma$ so that $T_{\sqrt{-1}\lambda (e\otimes e)}\in\sqrt{-1}\Gamma$.
Combining this with the condition $-\id\in\Gamma$, it follows $\sqrt{-1}T_{\sqrt{-1}\lambda (e\otimes e)}\in\Gamma(I)_{\Z}$.
\newline

$(2)\Rightarrow (1)$
Since $-\id\in\Gamma$, we have $\ZZ(I)_{\Z}=\ZZ(I)_{\Q}\cap\l\Gamma,-\id\r$.
On the other hand, $\sqrt{-1}T_{\sqrt{-1}\lambda (e\otimes e)}\in\Gamma(I)_{\Z}$ and $\sqrt{-1}\id\not\in\Gamma$ together shows that $T_{\sqrt{-1}\lambda (e\otimes e)}\in \ZZ(I)_{\Q}\cap\l\Gamma,\sqrt{-1}\id\r\setminus \ZZ(I)_{\Z}$.
\newline

$(2)\Rightarrow (3)$
Let $\gamma\defeq\sqrt{-1}T_{\sqrt{-1}\lambda e\otimes\lambda e}$ be an order 4 element in $\overline{\Gamma(I)}_{\Z}$.
The element $\gamma$ acts on $\overline{I}$ as $-\sqrt{-1}$-times and $I^{\perp}/I$ as $\sqrt{-1}$-times.
Hence, $\gamma$ acts on $\V(I)_{\C}$ trivially.
By definition, $\sqrt{-1}\id$ and $T_{\sqrt{-1}\lambda (e\otimes e)}$ act on $\ZZ(I)_{\Z}$ trivially, so the same holds for $\gamma$.
We also have the image of $\gamma\in\overline{\Gamma(I)_{\Z}}$ in $\U(I^{\perp})\times\GL(I_F)$ is  $(\sqrt{-1}\id_{I^{\perp}/I_F},\sqrt{-1}\id_{I_F})$.

On the other hand, under the assumption $\sqrt{-1}\not\in\Gamma$, it follows $T_{\sqrt{-1}\lambda (e\otimes e)}\not\in \ZZ(I)_{\Z}$.
This means that $\gamma$ acts on $\mathcal{X}(I)$ non-trivially.
Note that $\ZZ(I)_{\Q}$ acts on $\mathcal{X}(I)\subset T(I)\defeq \ZZ(I)_{\C}/\ZZ(I)_{\Z}$ as a  translation, so the above action is a non-trivial translation.
\newline

$(3)\Rightarrow (2)$
From (\ref{exact2}), we have $\gamma=(\sqrt{-1}\id_{I^{\perp}/I_{F}},\sqrt{-1}\id_{I_{F}},\alpha)$ for some $\alpha\in \W(I)_{\Q/\Z}$.
Since $\gamma$ acts on $\V(I)_{\C}$ trivially, it follows that the image of $\alpha$ in $\V(I)_{\Q}$ is 0 in (\ref{exact3}), so $\alpha\in \ZZ(I)_{\Q/\Z}$.
Hence, $\gamma=(\sqrt{-1}\id_{I^{\perp}/I_{F}},\sqrt{-1}\id_{I_{F}},T_{\sqrt{-1}\lambda (e\otimes e)})$ for some $\sqrt{-1}\lambda (e\otimes  e)\in\sqrt{-1}(\overline{I}\otimes I)(\Q)$.
Now, we have $\sqrt{-1}\id_{L}=(\sqrt{-1}\id_{I^{\perp}/I_{F}},\sqrt{-1}\id_{I_{F}},0)$, so combining this with $\gamma=(\sqrt{-1}\id_{I^{\perp}/I_{F}},\sqrt{-1}\id_{I_{F}},T_{\sqrt{-1}\lambda (e\otimes e)})$, it follows $\sqrt{-1}\gamma=-T_{\sqrt{-1}\lambda( e\otimes e)}\in\Gamma$.
Since we have assumed $-\id\in\Gamma$ so that $\sqrt{-1}T_{\sqrt{-1}\lambda (e\otimes  e)}\in\Gamma$.

\end{proof}
Geometrically, the existence of such a cusp corresponds to the existence of a branch divisor on the boundary of a  ball quotient with branch index 2.
We can show the following propositions in the same way as Proposition \ref{sp-2-irreg}.

\begin{defn}
We say that $I$ is \textit{semi-irregular with index $2$} if the conditions in Proposition \ref{sp-2-irreg}  are satisfied.
Here, we define $\ZZ(I)'_{\Z}\defeq \ZZ(I)_{\Q}\cap\l\Gamma,\sqrt{-1}\id\r$ and $\Gamma(I)'_{\Z}\defeq\l\Gamma(I)_{\Z},\sqrt{-1}\id\r/\l\sqrt{-1}\id\r$.
\end{defn}

Now, let us treat the index 4 case.

\begin{prop}
\label{4-irreg}
The following statements are equivalent.
\begin{enumerate}
\item $\ZZ(I)_{\Z}\neq \ZZ(I)_{\Q}\cap\l\Gamma,-\id\r\neq \ZZ(I)_{\Q}\cap\l\Gamma,\sqrt{-1}\id\r$,  that is, all three are different.
\item $-\id,\sqrt{-1}\id\not\in\Gamma$, and  $-\sqrt{-1}T_{\sqrt{-1}\lambda (e\otimes e)}\in\Gamma(I)_{\Z}$ for some $\sqrt{-1}\lambda (e\otimes  e)\in\sqrt{-1}(\overline{I}\otimes I)(\Q)$.
\item $\sqrt{-1}\id\not\in\Gamma$, and there exists an element $\gamma\in\overline{\Gamma(I)}_{\Z}$ of order $4$ acting on $\ZZ(I)_{\Z}$ and $\V(I)_{\C}$ trivially and $\mathcal{X}(I)$ non-trivially, and whose image in $\U(I^{\perp})\times\GL(I_F)$ is $(\sqrt{-1}\id_{I^{\perp}/I_{F}},\sqrt{-1}\id_{I_{F}})$.
Moreover, the order of this non-trivial action on $\mathcal{X}(I)$ is $4$.
\end{enumerate}
\end{prop}
\begin{proof}
This can be proven in the same way as Proposition \ref{sp-2-irreg}.
\end{proof}

\begin{defn}
We say that $I$ is \textit{irregular with index $4$} if the conditions in Proposition \ref{4-irreg}   are satisfied.
Here, we define $\ZZ(I)'_{\Z}\defeq \ZZ(I)_{\Q}\cap\l\Gamma,\sqrt{-1}\id\r$ and $\Gamma(I)'_{\Z}\defeq\l\Gamma(I)_{\Z},\sqrt{-1}\id\r/\l\sqrt{-1}\id\r$.
\end{defn}

\subsection{Case of $\Q(\sqrt{-3})$}
Throughout this subsection, we assume $F=\Q(\sqrt{-3})$.
Let $\omega$ be a primitive root of unity.

\begin{prop}
\label{sp-2-irreg'}
The following statements are equivalent.
\begin{enumerate}
\item $\ZZ(I)_{\Z}=\ZZ(I)_{\Q}\cap\l\Gamma,\omega\id\r\neq \ZZ(I)_{\Q}\cap\l\Gamma,-\id\r$.
\item $\omega\id\in\Gamma$, $-\id\not\in\Gamma$, and  $-T_{\sqrt{-3}\lambda (e\otimes e)}\in\Gamma(I)_{\Z}$ for some $\sqrt{-3}\lambda (e\otimes  e)\in\sqrt{-3}(\overline{I}\otimes I)(\Q)$.
\item $\omega\id\in\Gamma$, $-\id\not\in\Gamma$, and there exists an element $\gamma\in\overline{\Gamma(I)}_{\Z}$ of order $6$, acting on $\ZZ(I)_{\Z}$ and $\V(I)_{\C}$ trivially and $\mathcal{X}(I)$ non-trivially, and whose image in $\U(I^{\perp})\times\GL(I_F)$ is $(-\id_{I^{\perp}/I_{F}},-\id_{I_{F}})$.
Moreover, the order of this non-trivial action on $\mathcal{X}(I)$ is $2$.
\end{enumerate}
\end{prop}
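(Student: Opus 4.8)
The plan is to establish the three-way equivalence exactly as in Proposition \ref{sp-2-irreg}, since the setting for $F=\Q(\sqrt{-3})$ is structurally identical after replacing the torsion unit $\sqrt{-1}$ (of order $4$) by $\omega$ (of order $6$, a primitive sixth root of unity with $\omega^3=-\id$). The key arithmetic fact driving everything is that the full group of roots of unity in $\OO_F^{\times}$ is $\langle\omega\rangle\cong\Z/6\Z$, so the scalar matrices $\omega\id$ and $-\id=\omega^3\id$ are the relevant elements whose presence or absence in $\Gamma$ governs regularity. First I would prove $(1)\Leftrightarrow(2)$ by the same coset-decomposition argument: the hypothesis $U(I)_{\Z}=U(I)_{\Q}\cap\langle\Gamma,\omega\id\rangle$ together with $\omega\id\in\Gamma$ (forcing $\langle\Gamma,\omega\id\rangle=\Gamma$ on one side) contrasted with $-\id\notin\Gamma$ yields an element of $U(I)_{\Q}\cap\langle\Gamma,-\id\rangle\setminus U(I)_{\Z}$, which lies in $-\Gamma$ because $\langle\Gamma,-\id\rangle=\Gamma\sqcup(-\Gamma)$; multiplying by $-\id$ then produces the element $-T_{\sqrt{-3}\lambda(e\otimes e)}\in\Gamma(I)_{\Z}$. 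The reverse direction reverses this computation.

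For the geometric reformulation $(2)\Leftrightarrow(3)$ I would set $\gamma\defeq -T_{\sqrt{-3}\lambda(e\otimes e)}$ and verify its order is $6$ (since $-\id$ has order $2$ and commutes with the unipotent $T$, while the translation part has infinite order only modulo $U(I)_{\Z}$, so in $\overline{\Gamma(I)}_{\Z}$ the order is $2$; here one must instead use the scalar $\omega$-component coming from the image in $\U(I^{\perp})\times\GL(I_F)$ being $(-\id,-\id)$, and track that $\gamma$ projects to an element of order $6$). The action on $\overline{I}_F\otimes I^{\perp}/I_F\cong V(I)_{\Q}$ is trivial because $-\id$ scales $\overline{I}$ and $I^{\perp}/I$ by the same sign $-1$, so the induced action on the tensor product $V(I)_{\C}$ is trivial; the action on $U(I)_{\Z}$ is trivial because both $-\id$ and $T$ fix the one-dimensional center by the adjoint-action formula $T_{\sqrt{d}\lambda\gamma(e\otimes e)}=\gamma^{-1}T_{\sqrt{d}\lambda(e\otimes e)}\gamma$ recorded in the excerpt. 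The assumption $-\id\notin\Gamma$ is exactly what forces $T_{\sqrt{-3}\lambda(e\otimes e)}\notin U(I)_{\Z}$, giving the nontrivial translation action on $\mathcal{X}(I)\subset T(I)$, which has order $2$ since the scalar $-\id$ part squares to $\id$ on the boundary fiber.

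For $(3)\Rightarrow(2)$ I would run the exact-sequence decomposition: by \eqref{exact2} write $\gamma=(-\id_{I^{\perp}/I_F},-\id_{I_F},\alpha)$ with $\alpha\in W(I)_{\Q/\Z}$, use triviality on $V(I)_{\C}$ together with \eqref{exact3} to conclude $\alpha\in U(I)_{\Q/\Z}$, hence $\alpha=T_{\sqrt{-3}\lambda(e\otimes e)}$, and finally multiply by the known decomposition $-\id_L=(-\id_{I^{\perp}/I_F},-\id_{I_F},0)$ to recover $-\gamma=T_{\sqrt{-3}\lambda(e\otimes e)}\cdot(\text{scalar})$, from which $\omega\id\in\Gamma$ and the integrality of $-T_{\sqrt{-3}\lambda(e\otimes e)}$ follow. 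Since this is word-for-word parallel to Proposition \ref{sp-2-irreg}, in the write-up I would simply assert that the proof is identical after the substitution $\sqrt{-1}\rightsquigarrow\omega$, $4\rightsquigarrow 6$. The main subtlety — and the only place genuine care is needed — is bookkeeping the orders: one must check that the non-trivial action on the boundary fiber $\mathcal{X}(I)$ has order exactly $2$ even though $\gamma$ itself has order $6$ in $\overline{\Gamma(I)}_{\Z}$, the discrepancy being absorbed by the scalar sixth-root-of-unity action that is trivial on the toric boundary. Getting the interplay between the order-$6$ group element and its order-$2$ shadow on $\mathcal{X}(I)$ correct is the part most likely to trip up a hasty argument.
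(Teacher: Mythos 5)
Your overall route is the paper's: the paper proves Proposition \ref{sp-2-irreg'} simply by declaring it ``the same way as Proposition \ref{sp-2-irreg}'', and your $(1)\Leftrightarrow(2)$ argument, the triviality of the action on $U(I)_{\Z}$ (via the adjoint formula) and on $V(I)_{\C}$, the non-triviality on $\mathcal{X}(I)$ from $-\id\notin\Gamma$, and the exact-sequence argument for $(3)\Rightarrow(2)$ all match that template correctly. (You assume $\omega\id\in\Gamma$ when proving $(1)\Rightarrow(2)$ rather than deriving it from (1); the paper's template glosses this in the same way, so I will not press the point.)

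There is, however, a genuine gap at exactly the spot you flag as delicate, and your attempted resolution does not work. Your candidate $\gamma\defeq -T_{\sqrt{-3}\lambda(e\otimes e)}$ \emph{provably has order $2$}, not $6$, in $\overline{\Gamma(I)}_{\Z}$: since $-T_{\sqrt{-3}\lambda(e\otimes e)}\in\Gamma$, we get $\gamma^2=T_{2\sqrt{-3}\lambda(e\otimes e)}\in U(I)_{\Q}\cap\Gamma=U(I)_{\Z}$, which kills $\gamma^2$ in the quotient. More generally, \emph{any} $\gamma\in\overline{\Gamma(I)}_{\Z}$ that is trivial on $V(I)_{\C}$ and has image $(-\id_{I^{\perp}/I_F},-\id_{I_F})$ is of the form $-T'$ with $T'\in U(I)_{\Q}$, hence has order $2$ by the same computation; so the ``scalar $\omega$-component coming from the image being $(-\id,-\id)$'' that you invoke to reach order $6$ does not exist --- $(-\id,-\id)$ contains no $\omega$. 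The element of order $6$ must be built by hand from the hypothesis $\omega\id\in\Gamma$: take $\gamma\defeq(\omega\id)\cdot(-T_{\sqrt{-3}\lambda(e\otimes e)})=-\omega T_{\sqrt{-3}\lambda(e\otimes e)}\in\Gamma(I)_{\Z}$. Then modulo $U(I)_{\Z}$ one has $\gamma^2\equiv\omega^2\id$ and $\gamma^3\equiv-T_{\sqrt{-3}\lambda(e\otimes e)}$, both non-trivial, so $\gamma$ has order exactly $6$; it is trivial on $V(I)_{\C}$ because $\overline{(-\omega)}\cdot(-\omega)=1$ on $\overline{I}_F\otimes I^{\perp}/I_F$; and its action on $\mathcal{X}(I)$ is translation by $T_{\sqrt{-3}\lambda(e\otimes e)}$, of order $2$ because $T\notin U(I)_{\Z}$ (else $-\id\in\Gamma$) and $T^2=(-T)^2\in U(I)_{\Q}\cap\Gamma=U(I)_{\Z}$ --- note this lattice computation, and not your ``the scalar $-\id$ part squares to $\id$'', is what gives the order of the translation. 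The price is that this $\gamma$ has image $(-\omega\id,-\omega\id)$, not $(-\id,-\id)$: the two $\langle\omega\id\rangle$-translates $-T$ (order $2$, image $(-\id,-\id)$) and $-\omega T$ (order $6$) get conflated in condition (3) as printed, and your write-up inherits that tension without resolving it --- the claims ``order $6$'', ``image $(-\id,-\id)$'' and ``action of order $2$ on $\mathcal{X}(I)$'' are jointly unsatisfiable, so any correct proof must pick one of the two normalizations and say so explicitly.
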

\begin{proof}
This can be proven in the same way as Proposition \ref{sp-2-irreg}.
\end{proof}

\begin{defn}
We say that $I$ is \textit{semi-irregular with index $2$} if the conditions in Proposition \ref{sp-2-irreg'}  are satisfied.
Here, we define $\ZZ(I)'_{\Z}\defeq \ZZ(I)_{\Q}\cap\l\Gamma,\omega\id\r$ and $\Gamma(I)'_{\Z}\defeq\l\Gamma(I)_{\Z},\omega\id\r/\l\omega\id\r$.
\end{defn}

\begin{prop}
\label{sp-3-irreg}
The following statements are equivalent.
\begin{enumerate}
\item $\ZZ(I)_{\Z}=\ZZ(I)_{\Q}\cap\l\Gamma,-\id\r\neq \ZZ(I)_{\Q}\cap\l\Gamma,-\omega\id\r$.
\item $-\id\in\Gamma$, $\omega\id\not\in\Gamma$, and  $-T_{\sqrt{-3}\lambda (e\otimes e)}\in\Gamma(I)_{\Z}$ for some $\sqrt{-3}\lambda (e\otimes  e)\in\sqrt{-3}(\overline{I}\otimes I)(\Q)$.
\item $-\id\in\Gamma$, $\omega\id\not\in\Gamma$, and there exists an element $\gamma\in\overline{\Gamma(I)}_{\Z}$ of order $6$, acting on $\ZZ(I)_{\Z}$ and $\V(I)_{\C}$ trivially and $\mathcal{X}(I)$ non-trivially, and whose image in $\U(I^{\perp})\times\GL(I_F)$ is $(\omega\id_{I^{\perp}/I_{F}},\omega\id_{I_{F}})$.
Moreover, the order of this non-trivial action on $\mathcal{X}(I)$ is $3$.
\end{enumerate}
\end{prop}
\begin{proof}
This can be proven in the same way as Proposition \ref{sp-2-irreg}.
\end{proof}

\begin{defn}
We say that $I$ is \textit{semi-irregular with index $3$} if the conditions in Proposition \ref{sp-2-irreg'}  are satisfied.
Here, we define $\ZZ(I)'_{\Z}\defeq \ZZ(I)_{\Q}\cap\l\Gamma,\omega\id\r$ and $\Gamma(I)'_{\Z}\defeq\l\Gamma(I)_{\Z},\omega\id\r/\l\omega\id\r$.
\end{defn}

\begin{prop}
\label{3-irreg}
The following statements are equivalent.
\begin{enumerate}
\item $\ZZ(I)_{\Z}=\ZZ(I)_{\Q}\cap\l\Gamma,-\id\r\neq \ZZ(I)_{\Q}\cap\l\Gamma,\omega\id\r$.
\item $-\id\in\Gamma$, $\omega\id\not\in\Gamma$, and  $\omega T_{\sqrt{-3}\lambda (e\otimes e)}\in\Gamma(I)_{\Z}$ for some $\sqrt{-3}\lambda (e\otimes  e)\in\sqrt{-3}(\overline{I}\otimes I)(\Q)$.
\item $-\id\in\Gamma$, $\omega\id\not\in\Gamma$, and there exists an element $\gamma\in\overline{\Gamma(I)}_{\Z}$ of order $3$, acting on $\ZZ(I)_{\Z}$ and $\V(I)_{\C}$ trivially and $\mathcal{X}(I)$ non-trivially, and whose image in $\U(I^{\perp})\times\GL(I_F)$ is $(\omega\id_{I^{\perp}/I_{F}},\omega\id_{I_{F}})$.
Moreover, the order of this non-trivial action on $\mathcal{X}(I)$ is $3$.
\end{enumerate}
\end{prop}
\begin{proof}
This can be proven in the same way as Proposition \ref{sp-2-irreg}.
\end{proof}

\begin{defn}
We say that $I$ is \textit{irregular with index $3$} if the conditions in Proposition \ref{3-irreg}  are satisfied.
Here, we define $\ZZ(I)'_{\Z}\defeq \ZZ(I)_{\Q}\cap\l\Gamma,\omega\id\r$ and $\Gamma(I)'_{\Z}\defeq\l\Gamma(I)_{\Z},\omega\id\r/\l\omega\id\r$.
\end{defn}

\begin{prop}
\label{6-irreg}
The following statements are equivalent.
\begin{enumerate}
\item $\ZZ(I)_{\Z}\neq \ZZ(I)_{\Q}\cap\l\Gamma,-\id\r\neq \ZZ(I)_{\Q}\cap\l\Gamma,\omega\r$, that is, all three are different.
\item $-\id,\omega\id\not\in\Gamma$, and  $-\omega T_{\sqrt{-3}\lambda (e\otimes  e)}\in\Gamma(I)_{\Z}$ for some $\sqrt{-3}\lambda (e\otimes  e)\in\sqrt{-3}(\overline{I}\otimes I)(\Q)$.
\item $-\id,\omega\id\not\in\Gamma$, and there exists an element $\gamma\in\overline{\Gamma(I)}_{\Z}$ of order $6$, acting on $\ZZ(I)_{\Z}$ and $\V(I)_{\C}$ trivially and $\mathcal{X}(I)$ non-trivially, and whose image in $\U(I^{\perp})\times\GL(I_F)$ is $(-\omega\id_{I^{\perp}/I_{F}},-\omega\id_{I_{F}})$.
Moreover, the order of this non-trivial action on $\mathcal{X}(I)$ is $6$.
\end{enumerate}
\end{prop}
\begin{proof}
This can be proven in the same way as Proposition \ref{sp-2-irreg}.
\end{proof}

\begin{defn}
We say that $I$ is \textit{irregular with index $6$} if the conditions in Proposition \ref{6-irreg}  are satisfied.
Here, we define $\ZZ(I)'_{\Z}\defeq \ZZ(I)_{\Q}\cap\l\Gamma,-\id,\omega\id\r$ and $\Gamma(I)'_{\Z}\defeq\l\Gamma(I)_{\Z},-\id, \omega\id\r/\l-\id,\omega\id\r$.
\end{defn}

\subsection{Other cases}
Let $F$ be any imaginary quadratic field.
\begin{prop}
\label{2-irreg}
The following statements are equivalent.
\begin{enumerate}
\item $\ZZ(I)_{\Z}\neq \ZZ(I)_{\Q}\cap\l\Gamma,-\id\r$.
\item $-\id\not\in\Gamma$, and  $-T_{\sqrt{d}\lambda (e\otimes e)}\in\Gamma(I)_{\Z}$ for some $\sqrt{d}\lambda (e\otimes e)\in\sqrt{d}(\overline{I}\otimes I)(\Q)$.
\item $-\id\not\in\Gamma$, and there exists an element $\gamma\in\overline{\Gamma(I)}_{\Z}$ of order $2$, acting on $\ZZ(I)_{\Z}$ and $\V(I)_{\C}$ trivially and $\mathcal{X}(I)$ non-trivially, and whose image in $\U(I^{\perp})\times\GL(I_F)$ is $(-\id_{I^{\perp}/I_{F}},-\id_{I_{F}})$.
Moreover, the order of this non-trivial action on $\mathcal{X}(I)$ is $2$.
\end{enumerate}
\end{prop}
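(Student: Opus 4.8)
The plan is to mirror the proof of Proposition \ref{sp-2-irreg} verbatim, replacing the order-four scalar $\sqrt{-1}\id$ by the order-two scalar $-\id$. Since for $F\neq\Q(\sqrt{-1}),\Q(\sqrt{-3})$ we have $\OO_F^\times=\{\pm\id\}$, the scalar $-\id$ is the only torsion to keep track of; this shortens the argument and forces every irregular cusp in this case to have index exactly $2$. Concretely I would prove $(1)\Leftrightarrow(2)$ and $(2)\Leftrightarrow(3)$, the basic tool being the decomposition $\l\Gamma,-\id\r=\Gamma\sqcup(-\id)\Gamma$, valid precisely when $-\id\notin\Gamma$.

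For $(1)\Rightarrow(2)$ I would first observe that $(1)$ forces $-\id\notin\Gamma$: otherwise $\l\Gamma,-\id\r=\Gamma$ and both sides of $(1)$ equal $U(I)_\Q\cap\Gamma=U(I)_\Z$. Given $-\id\notin\Gamma$, choose $T_{\sqrt{d}\lambda(e\otimes e)}\in(U(I)_\Q\cap\l\Gamma,-\id\r)\setminus U(I)_\Z$, which has the stated form by (\ref{unipotent_isom}). As $U(I)_\Z=U(I)_\Q\cap\Gamma$, this element is not in $\Gamma$, hence lies in $(-\id)\Gamma$, so $-T_{\sqrt{d}\lambda(e\otimes e)}\in\Gamma(I)_\Z$. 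For the converse $(2)\Rightarrow(1)$, write $T\defeq T_{\sqrt{d}\lambda(e\otimes e)}$; from $-T\in\Gamma$ and $-\id\notin\Gamma$ one gets $T\notin\Gamma$ (else $-\id=(-T)T^{-1}\in\Gamma$), so $T\in(U(I)_\Q\cap\l\Gamma,-\id\r)\setminus U(I)_\Z$ witnesses the strict inequality in $(1)$.

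For $(2)\Rightarrow(3)$ I would set $\gamma\defeq -T_{\sqrt{d}\lambda(e\otimes e)}$ and check the four properties as in Proposition \ref{sp-2-irreg}. Its image in $\U(I^\perp/I_F)\times\GL(I_F)$ is $(-\id_{I^\perp/I_F},-\id_{I_F})$ since $T\in W(I)_\Q$ lies in the kernel of (\ref{exact2}); it acts trivially on $U(I)_\Z$ because $-\id$ is central and $U(I)_\Q$ is abelian; it acts trivially on $V(I)_\C\cong\overline{I}_F\otimes I^\perp/I_F$ because $-\id$ acts there as $(-1)\otimes(-1)=1$ while $T$ is central; and it acts on $\mathcal{X}(I)\subset T(I)=U(I)_\C/U(I)_\Z$ by the translation induced by $T$, which is nontrivial as $T\notin U(I)_\Z$ and of order $2$ since $\gamma^2=T^2\in U(I)_\Q\cap\Gamma=U(I)_\Z$ (here $-\id$ itself acts trivially on $\mathcal{X}(I)$, being a projective scalar). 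The same identity $\gamma^2=T^2\in U(I)_\Z$ with $\gamma\notin U(I)_\Z$ shows $\gamma$ has order $2$ in $\overline{\Gamma(I)}_\Z$. For $(3)\Rightarrow(2)$ I would use (\ref{exact2}) to write $\gamma=(-\id_{I^\perp/I_F},-\id_{I_F},\alpha)$ with $\alpha\in W(I)_{\Q/\Z}$; triviality on $V(I)_\C$ together with (\ref{exact3}) forces the image of $\alpha$ in $V(I)_\Q$ to vanish, so $\alpha\in U(I)_{\Q/\Z}$ equals some $T_{\sqrt{d}\lambda(e\otimes e)}$. Since $-\id_L=(-\id_{I^\perp/I_F},-\id_{I_F},0)$, this gives $\gamma=-T_{\sqrt{d}\lambda(e\otimes e)}$, and lifting $\gamma$ to $\Gamma(I)_\Z\subset\Gamma$ yields $(2)$.

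The only genuinely delicate points, exactly as in Proposition \ref{sp-2-irreg}, are the bookkeeping in the extensions (\ref{exact2})--(\ref{exact3})—in particular the identification of ``$\gamma$ acts trivially on $V(I)_\C$'' with ``$\gamma$ maps to $0$ in $V(I)_\Q$''—and confirming that the induced action on $\mathcal{X}(I)$ is a translation of order exactly $2$ rather than trivial. Both reduce to the single fact that $T_{\sqrt{d}\lambda(e\otimes e)}$ is central in $W(I)_\Q$ with $T^2\in U(I)_\Z$ but $T\notin U(I)_\Z$. I do not expect any obstacle beyond those already present in Proposition \ref{sp-2-irreg}; indeed the absence of roots of unity other than $-1$ eliminates the index-$4$ complication, so this case is strictly simpler.
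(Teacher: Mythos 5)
Your proposal is correct and matches the paper's intent exactly: the paper proves Proposition \ref{2-irreg} by simply asserting it follows ``in the same way as Proposition \ref{sp-2-irreg}'', and your argument is precisely that adaptation, with the decomposition $\l\Gamma,-\id\r=\Gamma\sqcup(-\id)\Gamma$ replacing $\l\Gamma,\sqrt{-1}\id\r=\Gamma\sqcup\sqrt{-1}\Gamma$ and the bookkeeping through (\ref{unipotent_isom}), (\ref{exact2}), (\ref{exact3}) carried out correctly, including the key verifications that $\gamma^2=T^2\in U(I)_{\Z}$ and $T\notin U(I)_{\Z}$ give the order-$2$ translation on $\mathcal{X}(I)$. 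No gaps.
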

\begin{proof}
This can be proven in the same way as Proposition \ref{sp-2-irreg}.
\end{proof}

\begin{defn} 
We say that $I$ is \textit{irregular with index $2$} if the following holds.
If $F\neq\Q(\sqrt{-1}),\Q(\sqrt{-3})$, then the conditions in  Proposition \ref{2-irreg} are satisfied.
If $F=\Q(\sqrt{-1})$, then the conditions in  Proposition \ref{2-irreg} are satisfied and the conditions in  Proposition \ref{4-irreg} are not satisfied.
If $F=\Q(\sqrt{-3})$, then the conditions in  Proposition \ref{2-irreg} are satisfied and the conditions in Proposition \ref{sp-2-irreg'} and Proposition \ref{6-irreg} are not satisfied.
In these cases, we define $\ZZ(I)'_{\Z}\defeq \ZZ(I)_{\Q}\cap\l\Gamma,-\id\r$ and $\Gamma(I)'_{\Z}\defeq\l\Gamma(I)_{\Z},-\id\r/\l-\id\r$.
\end{defn}

\begin{defn}
We say that $I$ is \textit{regular} if $I$ is not irregular or semi-irregular in the sense of the above definitions.
\end{defn}

\subsection{Relation with irregular cusps on orthogonal modular varieties}
Now, let us give another description of regular or irregular cusps.
We define
\[\ZZ(I)^{\star}_{\Z}\defeq\begin{cases}
(\{\pm1,\pm\sqrt{-1}\}\ZZ(I)_{\Q})\cap\Gamma & (F=\Q(\sqrt{-1}))\\
(\{\pm1,\pm\omega,\pm\omega^2\}\ZZ(I)_{\Q})\cap\Gamma & (F=\Q(\sqrt{-3}))\\
(\{\pm1\}\ZZ(I)_{\Q})\cap\Gamma & (F\neq\Q(\sqrt{-1}, \Q(\sqrt{-3}))).\\
\end{cases}\]
We can classify irregular cusps according to the structure of $\ZZ(I)_{\Z}^{\star}/\ZZ(I)_{\Z}$.

For $F=\Q(\sqrt{-1})$, 
\[\ZZ(I)_{\Z}^{\star}/\ZZ(I)_{\Z}\cong\begin{cases}
1 & (\mathrm{type}\ R_1) \\
\l-\id\r\cong\Z/2\Z & (\mathrm{type}\ R_2) \\
\l\sqrt{-1}\id\r\cong\Z/4\Z & (\mathrm{type}\ R_4) \\
\l-T_{\sqrt{-1}\lambda(e\otimes e)}\r\cong\Z/2\Z & (\mathrm{type}\ I_2) \\
\l-\id,-\sqrt{-1}T_{\sqrt{-1}\lambda(e\otimes e)}\r\cong\Z/2\Z\times\Z/2\Z & (\mathrm{type}\ SI_{2}) \\
\l-\sqrt{-1}T_{\sqrt{-1}\lambda(e\otimes e)}\r\cong\Z/4\Z & (\mathrm{type}\ I_4). \\
\end{cases}\]

For $F=\Q(\sqrt{-3})$, 
\[\ZZ(I)_{\Z}^{\star}/\ZZ(I)_{\Z}\cong\begin{cases}
1 & (\mathrm{type}\ R_1) \\
\l-\id\r\cong\Z/2\Z & (\mathrm{type}\ R_2) \\
\l\omega\id\r\cong\Z/3\Z & (\mathrm{type}\ R_3) \\
\l-\omega\id\r\cong\Z/6\Z & (\mathrm{type}\ R_6) \\
\l-T_{\sqrt{-3}\lambda(e\otimes e)}\r\cong\Z/2\Z & (\mathrm{type}\ I_2) \\
\l-\omega,-T_{\sqrt{-3}\lambda(e\otimes e)}\r\cong\Z/3\Z\times\Z/2\Z\cong\Z/6\Z & (\mathrm{type}\ SI_{2}) \\
\l\omega T_{\sqrt{-3}\lambda(e\otimes e)}\r\cong\Z/3\Z & (\mathrm{type}\ I_3) \\
\l-\id, \omega T_{\sqrt{-3}\lambda(e\otimes e)}\r\cong\Z/2\Z\times\Z/3\Z\cong\Z/6\Z & (\mathrm{type}\ SI_{3}) \\
\l-\omega T_{\sqrt{-3}\lambda(e\otimes e)}\r\cong\Z/6\Z & (\mathrm{type}\ I_6). \\
\end{cases}\]

For $F\neq\Q(\sqrt{-1}), \Q(\sqrt{-3})$, 
\[\ZZ(I)_{\Z}^{\star}/\ZZ(I)_{\Z}\cong\begin{cases}
1 & (\mathrm{type}\ R_1) \\
\l-\id\r\cong\Z/2\Z & (\mathrm{type}\ R_2) \\
\l-T_{\sqrt{d}\lambda(e\otimes e)}\r\cong\Z/2\Z & (\mathrm{type}\ I_2). \\
\end{cases}\]
Here, type $R_{\star}$ corresponds to regular cusps, and type $I_{\star}$  (resp.  $SI_{\star}$) corresponds to irregular (resp. semi-irregular) cusps with index $\star$.

Now we will explicitly show how the type of cusps varies when arithmetic subgroups change, and consider the relationship between unitary cusps and orthogonal cusps.
Figures 1, 2, 3 show whether the cusps with respect to finite index subgroups of $\U(L)(\Z)$ are regular or irregular according to inclusions.
We fix an irregular cusp $I$.
For a finite index subgroup $\Gamma\subset\U(L)(\Z)$, these figures represent the type  candidates of another finite index subgroup $\Gamma'\subset\U(L)(\Z)$ having the inclusion relationship with $\Gamma$.
If $\Gamma\subset\Gamma'$ and $\Gamma$ is type $X$, then $\Gamma'$ is type located above $X$ in the  figures, and if $\Gamma'\subset\Gamma$, then $\Gamma'$ is type located below $X$ in the  figures.
For example, in Figure 1, for $F=\Q(\sqrt{-1})$, let $\Gamma$ be type $R_2$.
Then $\Gamma'\supset\Gamma$ is type $R_2$, $SI_2$ or $R_4$.
On the other hand if $\Gamma'\subset\Gamma$, then $\Gamma'$ is type $R_2$, $I_2$ or $R_1$.
Circle nodes mean regular cusps and diamond nodes mean irregular cusps.

\begin{figure}[H]
  \includegraphics[width=1cm]{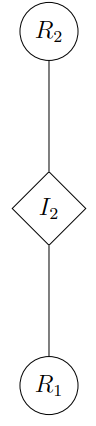}
          \caption{$F\neq\Q(\sqrt{-1}),\Q(\sqrt{-3})$}
\end{figure}
\begin{figure}[H]
  \includegraphics[width=4cm]{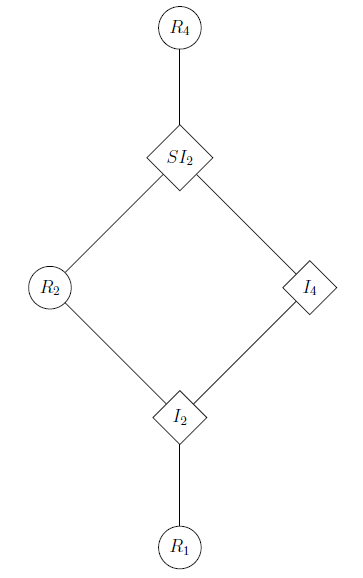}
          \caption{$F=\Q(\sqrt{-1})$}
\end{figure}
\begin{figure}[H]
  \includegraphics[width=10cm]{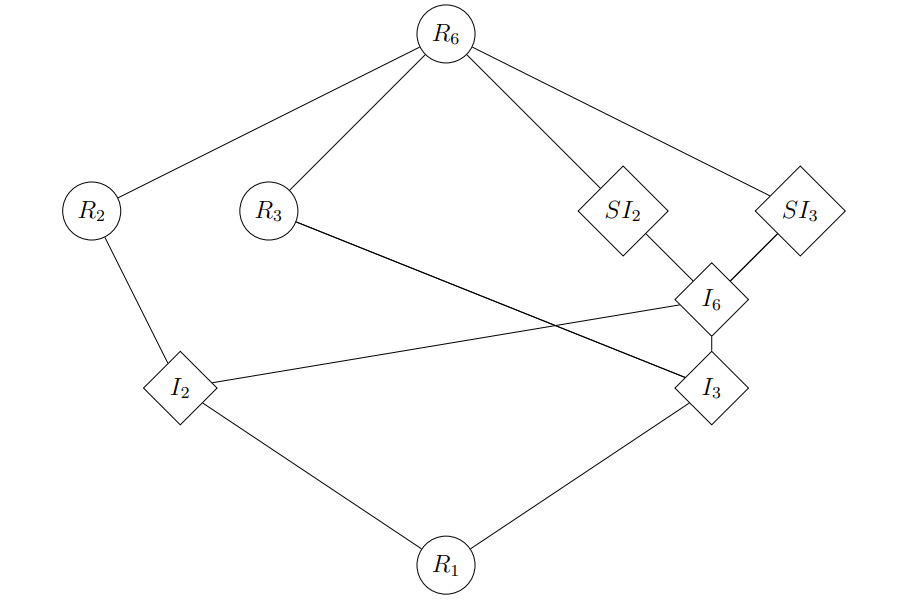}
          \caption{$F=\Q(\sqrt{-3})$}
\end{figure}

Next, let us discuss the relationship between regular/irregular cusps on ball quotients and regular/irregular cusps on orthogonal modular varieties, as studied in \cite{irregular}.
We will borrow a notion of Hermitian forms; i.e., $(L,\l\ ,\ \r)$ denotes a Hermitian lattice of signature $(1,n)$ over $\OO_F$.
We can embed unitary Hermitian symmetric domains (type I) into orthogonal Hermitian symmetric domains (type IV).
In regard to the following discussion on orthogonal modular varieties, the reader may find it informative to consult \cite{Hofmann, Maeda2, MaedaOdaka}.

Let $(L_Q,(\ ,\ ))$ be the associated quadratic lattice over $\Z$ of signature $(2,2n)$, i.e., $L_Q\defeq L$ as a $\Z$-module and $(\ ,\ )\defeq\Tr_{F/\Q}\l\ ,\ \r$.
The associated orthogonal Hermitian symmetric domain is defined by
\[\D_{L_Q}\defeq\{v\in\mathbb{P}(L_Q\otimes\C)\mid (v,v)=0, (v,\overline{v})>0\}^+.\]
Then, we obtain the following embedding:
\begin{equation}
\label{iota}
    \iota:D_L\hookrightarrow \D_{L_Q}.
\end{equation}
By abuse of notation, we also denote by $\iota:\U(L)\hookrightarrow\O^+(L_Q)$.
In this embedding, we identify the unitary group $\U(L)$ with a subgroup of $\O^+(L_Q)$.
Specifically, we get
\[\U(V)=\{\gamma\in\O^+((L_Q)_{\Q})\mid j_d\gamma j_d=d\gamma\}\]
where $j_d\in\O^+((L_Q)_{\Q})$ satisfies $j_d^2=d\id_{L_Q}$.
Explicitly, 
\[j_d\defeq\left(
\begin{array}{cccc}
\left(
\begin{array}{cc}
0&d\\
1&0
\end{array}
\right)&0&0\\
0&\ddots & 0\\
0 & 0 &\left(
\begin{array}{cc}
0&d\\
1&0
\end{array}
\right)
\end{array}
\right).\]

We are concerned whether the image of regular/irregular cusps on ball quotients by $(\ref{iota})$ are regular or irregular on orthogonal modular varieties.
By \cite[Proposition 2]{Hofmann}, a 0-dimensional cusp on $D_L$, corresponding to a rank 1 primitive isotropic sublattice $I\subset L$ maps to a 1-dimensional cusp on $\D_{L_Q}$, corresponding to the rank 2 primitive isotropic sublattice $I_Q\subset L_Q$ spanned by $I$ and $\sqrt{d}I$ (or ($1+\sqrt{d})/2I$ for $d\equiv 1\bmod 4$).
Ma studied irregular cusps on orthogonal modular varieties; here, we will review some of his  results.
In orthogonal cases, only 2-ramifications may occur; they are classified as follows:

\[\ZZ(I_Q)_{\Z}^{\star}/\ZZ(I_Q)_{\Z}\cong\begin{cases}
1 & (\mathrm{type}\ (R_1)_O) \\
\l-\id\r\cong\Z/2\Z & (\mathrm{type}\ (R_2)_O) \\
\l-T_{\sqrt{d}\lambda(e\otimes e)}\r\cong\Z/2\Z & (\mathrm{type}\ (I_2)_O) \\
\end{cases}\]
where $\ZZ(I_Q)_{\Z}$ is the intersection of the center of the unipotent part of the stabilizer of $I_Q$ in $\O^+((L_Q)_{\Q})$ and a finite index subgroup $\Gamma_O\subset\O^+(L_Q)(\Z)$ as in our unitary case.
Type $(R_1)_O$ and $(R_2)_O$ (resp. $(I_2)_O$) means that $I_Q$ is regular (resp. irregular with index 2) in $\D_{L_Q}/\Gamma_O$.
Note that the image of $\ZZ(I)_{\Q}$ is precisely $\ZZ(I_Q)_{\Q}$ and the image of the discriminant kernel in the unitary group is a subgroup of the discriminant kernel in the orthogonal group.
By \cite[Corollary 3,6]{irregular}, we obtain Figure 4 in the orthogonal case.

\begin{figure}[H]
  \includegraphics[width=1cm]{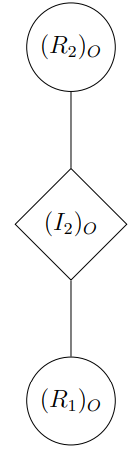}
          \caption{Orthogonal case}
\end{figure}

Now, let us study the image of regular/irregular cusps on orthogonal modular varieties.
Refer to Figures 5, 6, 7.
By \cite[Lemma 2.5]{GHS}, for a 1-dimensional cusp $J\subset L_Q$, the center of the unipotent part of its stabilizer in $\O^+((L_Q)_{\Q})$ is described as
\[\ZZ(J)_{\Q}=\left\{\left(
\begin{array}{cccc}
  I_2&0&\left(
\begin{array}{cc}
0&e\lambda\\
\lambda&0
\end{array}
\right)\\
0&I_{2n-2} & 0\\
0 & 0 & I_2
\end{array}
\right)\middle|\ \lambda\in \Q\right\}\]
for some $e\in\Q$.
For a 2-dimensional $\Q$-isotropic subspace $J_{\Q}\subset (L_Q)_{\Q}$, if we consider it to be a subset of $V$, it defines an $F$-subspace of $V$ if and only if $e=d$.
In that case, the corresponding subspace $I_F$ is a 1-dimensional $F$-isotropic subspace of $V$ and hence corresponds to a 0-dimensional cusp.
This shows that when $e=d$, $\iota(\ZZ(I)_{\Q})=\ZZ(J)_{\Q}$.
We also have $\iota(-\id)=-\id$, $\iota(\sqrt{-1}\id)=j_{-1}$ and 
\[\iota(\omega\id)=\left(
\begin{array}{cccc}
\left(
\begin{array}{cc}
0&1\\
-1&-1
\end{array}
\right)&0&0\\
0&\ddots & 0\\
0 & 0 &\left(
\begin{array}{cc}
0&1\\
-1&-1
\end{array}
\right)
\end{array}
\right).\]
In this situation, consider the following problem.
Let $J\subset L_Q$ be a 1-dimensional cusp and $e=d$ as above.
Let $I\subset L$ be the corresponding 0-dimensional cusp.
Note that $\iota(\ZZ(I)_{\Q})=\ZZ(J)_{\Q}$ holds.
We assume $J$ is a regular or an irregular cusp in the sense of \cite[Definition 6.2]{irregular} with respect to a finite index subgroup $\Gamma_{O}\subset\O^+(L_Q)(\Z)$.
We shall determine whether the corresponding cusp $I$ is regular or irregular in the sense of the above definitions with respect to $\Gamma_U\defeq\iota^{-1}(\Gamma_O)$.

If $J$ is irregular, then $\Gamma_O$ is type $(I_2)_O$.
In this case, since $-\id\not\in\Gamma_O$, we have $-\id\not\in\Gamma_U$; moreover, from the fact $\iota(\ZZ(I)_{\Q})=\ZZ(J)_{\Q}$, it follows that $I$ is irregular and $\Gamma_U$ is type $I_2$, $I_4$, $SI_2$ or $I_6$.
On the other hand, if $J$ is irregular, then $\Gamma_O$ is type $(R_1)_O$ or $(R_2)_O$.
In the first case, since $-\id\in\Gamma_O$, it follows that $-\id\in\Gamma_U$, so we have that  $\Gamma_U$ is type $R_1$, $R_3$, or $I_3$.
In the second case, since $-\id\not\in\Gamma_U$, it follows that $\Gamma_U$ is type $R_2$, $R_4$, $SI_2$ or $SI_3$.

In the following figures, star nodes mean that regular cusps in unitary groups become irregular cusps in orthogonal groups.
These figures show what the type of $\Gamma_O\subset\O^+(L_Q)(\Z)$ is when   $\Gamma_{U}\subset\U(L)(\Z)$ is a certain type.
For example, for $F=\Q(\sqrt{-1})$, if $\Gamma_U\subset\U(L)(\Z)$ is type $R_4$, then the corresponding 1-dimensional cusp is type $(R_2)_O$.
Indeed, regular cusps on $D_L$ map to regular cusps on $\D_{L_Q}$. 
On the other hand, for $F=\Q(\sqrt{-3})$, if $\Gamma\subset\U(L)(\Z)$ is type $SI_3$, i.e, semi-irregular with index 3, then the corresponding 1-dimensional cusp is regular (type $(R_2)_O$).

\begin{figure}[H]
  \includegraphics[width=8cm]{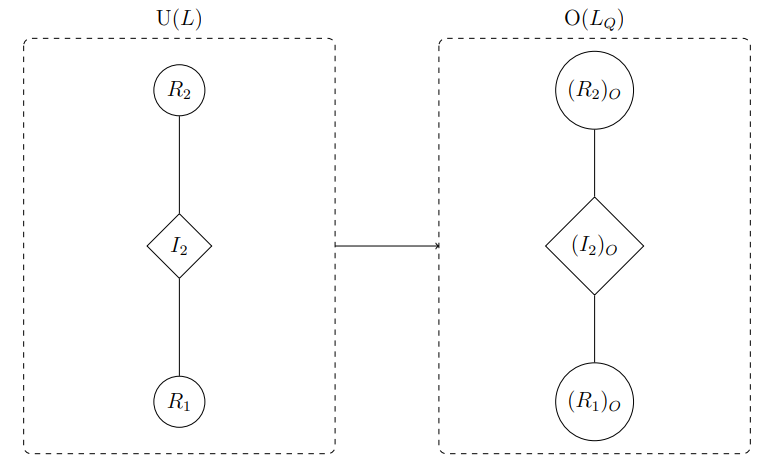}
          \caption{Relationship between unitary and orthogonal for $F\neq\Q(\sqrt{-1}),\Q(\sqrt{-3})$}
\end{figure}
\begin{figure}[H]
  \includegraphics[width=8cm]{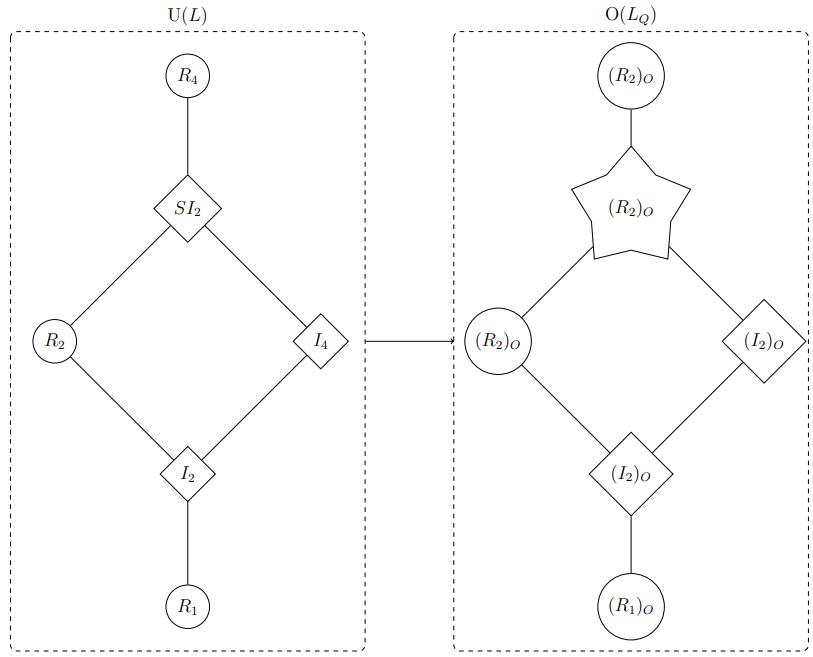}
          \caption{Relationship between unitary and orthogonal for $F=\Q(\sqrt{-1})$}
\end{figure}
\begin{figure}[H]
  \includegraphics[width=8cm]{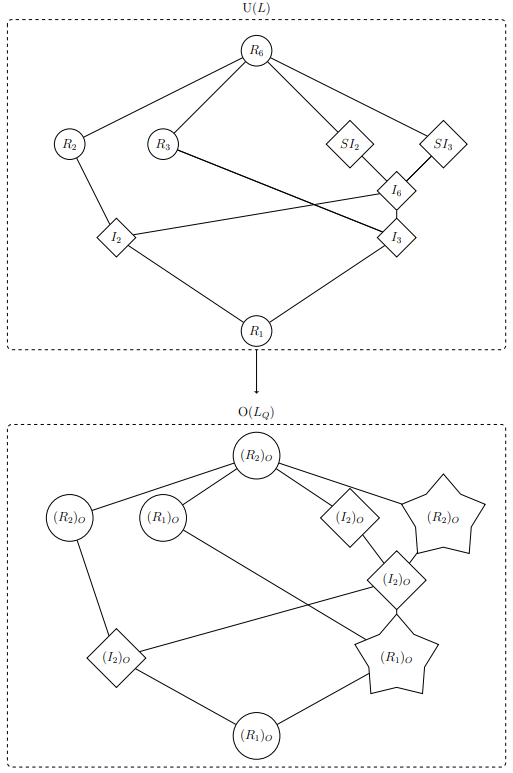}
          \caption{Relationship between unitary and orthogonal for $F=\Q(\sqrt{-3})$}
\end{figure}

From the figures 5, 6, 7, we obtain the following proposition.
Let $\Gamma_O\subset\O^+(L_Q)(\Z)$ be a finite index subgroup and $\Gamma_U\subset\U(L)(\Z)$ be its restriction.
Here, regular/irregular cusps on $D_L$ (resp. $\D_{L_Q}$) mean regular/irregular cusps with respect to $\Gamma_U$ (resp. $\Gamma_O$).
\begin{prop}
\label{unitary_to_orthogonal}
\begin{enumerate}
    \item For any imaginary quadratic field $F$, regular cusps on $D_L$  map to regular cusps on $\D_{L_Q}$.
    \item For $F\neq\Q(\sqrt{-1}), \Q(\sqrt{-3})$, irregular cusps on $D_L$ map to irregular cusps on $\D_{L_Q}$.
    \item For $F=\Q(\sqrt{-1})$, irregular cusps with index $2$ or $4$ on $D_L$ map to irregular cusps with index $2$ on $\D_{L_Q}$, and semi-irregular cusps with index $2$ on $D_L$ map to regular cusps on $\D_{L_Q}$.
    \item For $F=\Q(\sqrt{-3})$, irregular cusps with index $2$ or $6$ and semi-irregular cusps with index 2 on $D_L$ map to irregular cusps with index $2$ on $\D_{L_Q}$, and irregular cusps with index 3 and semi-irregular cusps with index $3$ on $D_L$ map to regular cusps on $\D_{L_Q}$.
\end{enumerate}

\end{prop}

\section{Discriminant kernel case}
\label{discriminant_kernel_case}
Here, we shall show a structure theorem of the discriminant group when the discriminant kernel may have irregular cusps.
In this section, we assume that the class number of $F$ is 1.
For a rank 1 primitive isotropic sublattice $I$ of $L$ and a generator $e$ of $I$, the quantity $\div(I)$ denotes a generator of the principal ideal $\{\l\ell,e\r\mid\ell\in L\}$.
Note that, unlike the orthogonal case, there is no canonical choice of this quantity.
Let $\Gamma\subset \U(L)(\Z)$ be a finite index subgroup.

In this section and Appendix \ref{app:A}, we assume that $L$ is \textit{even}, that is, $\l\ell,\ell\r\in\Z$ for any $\ell\in L$ in the sense of \cite{Hofmann}.
Note that this implies that the associated quadratic lattice is even.
This corresponds to the assumption in \cite[subsection 4.1]{irregular}.
Let $a,b\in\Z$ be integers with $a\neq 0$ or $b\neq 0$.
This section uses the following notation:
\[\div(I)=
\begin{cases}
\frac{2a+(1+\sqrt{d})b}{2\sqrt{d}} & (d\equiv 1\bmod 4) \\
\frac{a+b\sqrt{d}}{2\sqrt{d}} & (d\equiv 2,3\bmod 4).
\end{cases}\]

\subsection{Preparation}
\begin{lem}[c.f. {\cite[Lemma 4.1]{irregular}}]
\label{discriminant_kernel1}
Assuming $\widetilde{\U}(L)\subset\Gamma$, we have $\sqrt{d}(\overline{I}\otimes I)(\Z)\subset \ZZ(I)_{\Z}$.
Here, 
\[\sqrt{d}(\overline{I}\otimes I)(\Z)\defeq\{\sqrt{d}\lambda (e\otimes e)\mid\lambda\in\Z\}.\]
\end{lem}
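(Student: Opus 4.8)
The plan is to prove the slightly stronger statement that $T_{\sqrt{d}\lambda(e\otimes e)}\in\widetilde{\U}(L)$ for every $\lambda\in\Z$. Since $\widetilde{\U}(L)\subset\Gamma$ by hypothesis and $T_{\sqrt{d}\lambda(e\otimes e)}\in U(I)_{\Q}$ by the isomorphism (\ref{unipotent_isom}), this immediately yields $T_{\sqrt{d}\lambda(e\otimes e)}\in U(I)_{\Q}\cap\Gamma=U(I)_{\Z}$, which is exactly the asserted inclusion $\sqrt{d}(\overline{I}\otimes I)(\Z)\subset U(I)_{\Z}$. Thus the whole lemma reduces to checking membership in the discriminant kernel, i.e. two things: that $T_{\sqrt{d}\lambda(e\otimes e)}$ preserves $L$, and that it acts trivially on $A_L=L^{\vee}/L$.

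First I would record the explicit action. Taking $v=\sqrt{d}\lambda e$ in the transvection of \cite{looijenga_ball} and using that $e$ is isotropic (so $\l e,e\r=0$ and the quadratic term drops out) together with $\overline{\sqrt{d}}=-\sqrt{d}$, one computes
\[T_{\sqrt{d}\lambda(e\otimes e)}(z)=z+2\sqrt{d}\,\lambda\l z,e\r e,\]
which is consistent with the matrix in (\ref{unipotent_isom}) carrying $2\lambda\sqrt{d}$ in the upper-right corner. Hence for every $z$ the correction term is a scalar multiple of $e\in L$, and the entire question collapses to whether $2\sqrt{d}\lambda\l z,e\r\in\OO_F$.

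Next I would run this through the two integrality inputs. For preservation of $L$, take $z\in L$: by integrality of the Hermitian form $\l z,e\r\in\partial\OO_F$. For triviality on $A_L$, take $z\in L^{\vee}$: by the definition of the dual lattice $\l z,e\r\in\partial\OO_F$ as well (since $e\in L$). In both cases I would split on $d\bmod 4$: when $d\equiv1$ we have $\partial=1/\sqrt{d}$, so $\sqrt{d}\l z,e\r\in\OO_F$ and thus $2\sqrt{d}\lambda\l z,e\r\in\OO_F$; when $d\equiv 2,3$ we have $\partial=1/(2\sqrt{d})$, so $2\sqrt{d}\l z,e\r\in\OO_F$ and again $2\sqrt{d}\lambda\l z,e\r\in\OO_F$ because $\lambda\in\Z$. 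Therefore the correction term lies in $\OO_F\cdot e\subset L$, which gives $T_{\sqrt{d}\lambda(e\otimes e)}(L)\subseteq L$ (and symmetrically for $T^{-1}$, which is of the same form with $-\lambda$) as well as $T_{\sqrt{d}\lambda(e\otimes e)}\equiv\id$ on $A_L$.

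The calculation is short, and the only genuine subtlety — the ``main obstacle'' in an otherwise routine verification — is the bookkeeping of the factor $2\sqrt{d}$ against the denominator of $\partial$: one must confirm that in the case $d\equiv 2,3\bmod 4$, where $\partial$ carries an extra factor $1/2$, this is precisely cleared by the $2$ produced in the explicit action, and that $\lambda\in\Z$ (rather than merely $\Q$) is what keeps the scalar integral. Since $T_{e\otimes v}$ is a unitary transvection by construction, no separate check that it preserves the form is required, and we conclude $T_{\sqrt{d}\lambda(e\otimes e)}\in\widetilde{\U}(L)\subset\Gamma$, completing the argument.
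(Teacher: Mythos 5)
Your proof is correct and takes essentially the same route as the paper: the paper's own (one-line) argument likewise reduces the lemma to checking that $T_{\sqrt{d}\lambda(e\otimes e)}$ lies in the discriminant kernel $\widetilde{\U}(L)\subset\Gamma$, whence the inclusion into $U(I)_{\Z}=U(I)_{\Q}\cap\Gamma$. Your explicit formula $T_{\sqrt{d}\lambda(e\otimes e)}(z)=z+2\sqrt{d}\lambda\l z,e\r e$ together with the case split on $d\bmod 4$ (clearing the denominator of $\partial$ against the factor $2\sqrt{d}$, with $\lambda\in\Z$) merely supplies the verification the paper leaves implicit.
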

\begin{proof}
For $\sqrt{d}\lambda (e\otimes e)\in\sqrt{d}(\overline{I}\otimes I)(\Z)$, we can show that $T_{\sqrt{d}\lambda (e\otimes e)}$ preserves the discriminant group and this gives the inclusion $\sqrt{d}(\overline{I}\otimes I)(\Z)\subset \ZZ(I)_{\Z}$.
\end{proof}

\begin{lem}[c.f. {\cite[Lemma 4.3]{irregular}}]
\label{discriminant_kernel2}
Let  $\Gamma=\widetilde{\U}(L)$.
\begin{enumerate}
    \item For any imaginary quadratic field $F$ with class number $1$, if $I$ is irregular with index $2$, then $2/\div(I)$ is an element of $\OO_F$.
    \item For $F=\Q(\sqrt{-1})$, if $I$ is semi-irregular with index $2$ $\mathrm{(}$resp. irregular index $4\mathrm{)}$, then $(1-\sqrt{-1})/\div(I)$ $\mathrm{(}$resp. $(1+\sqrt{-1})/\div(I)\mathrm{)}$ is an element of $\OO_F$.
    \item For $F=\Q(\sqrt{-3})$, if $I$ is semi-irregular with index $2$ $\mathrm{(}$resp. $\mathrm{(}$semi-$\mathrm{)}$irregular with index $3$, irregular with index $6\mathrm{)}$, then $2/\div(I)$ $\mathrm{(}$resp. $(1-\omega)/\div(I)$,  $(1+\omega)/\div(I)\mathrm{)}$ is an element of $\OO_F$.
\end{enumerate}
\end{lem}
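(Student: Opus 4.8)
The plan is to read the divisibility off a single vector of the dual lattice on which the unipotent translation occurring in the irregularity condition acts trivially, so that only the scalar root of unity contributes. First I would record what each hypothesis supplies: in every case the corresponding Proposition (\ref{sp-2-irreg}, \ref{4-irreg}, \ref{sp-2-irreg'}, \ref{sp-3-irreg}, \ref{3-irreg}, \ref{6-irreg} or \ref{2-irreg}) produces an element
\[
g=\zeta\,T_{\sqrt{d}\lambda(e\otimes e)}\in\Gamma(I)_{\Z}=\widetilde{\U}(L)\cap\Gamma(I)_{\Q},
\]
with $T_{\sqrt{d}\lambda(e\otimes e)}\in U(I)_{\Q}$ and $\zeta\in\OO_F^{\times}$ the root of unity attached to the type: $\zeta=-1$ in the index $2$ cases for all fields and for special-irregular index $2$ over $\Q(\sqrt{-3})$; $\zeta=\sqrt{-1}$ resp. $-\sqrt{-1}$ for special-irregular index $2$ resp. irregular index $4$ over $\Q(\sqrt{-1})$; and $\zeta=\omega$ resp. $-\omega$ for the index $3$ resp. index $6$ cases over $\Q(\sqrt{-3})$. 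Because the class number of $F$ is $1$, the fractional ideal $\{\l\ell,e\r\mid\ell\in L\}$ is principal, so $\div(I)$ is an honest generator, well defined up to a unit; the assertions are thus divisibility statements insensitive to this ambiguity.

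Next I would introduce the test vector
\[
e^{\ast}\defeq\frac{1}{\overline{\div(I)}}\,e\in L\otimes_{\OO_F}F
\]
and check that $e^{\ast}\in L^{\vee}$. By the definition of $\div(I)$ one has $\l\ell,e\r\in\div(I)\OO_F$ for every $\ell\in L$, hence $\l e^{\ast},\ell\r=\overline{\div(I)}^{-1}\,\overline{\l\ell,e\r}\in\OO_F\subseteq\partial\OO_F$, the last inclusion holding since $1/\partial\in\{\sqrt{d},2\sqrt{d}\}\subset\OO_F$. Therefore $e^{\ast}$ lies in $L^{\vee}$ and defines a class in $A_L=L^{\vee}/L$, which is the object carrying the constraint.

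The heart of the argument is the evaluation of $g$ on $e^{\ast}$. Since $T_{\sqrt{d}\lambda(e\otimes e)}$ lies in the centre $U(I)_{\Q}=\Ker(\Gamma(I)_{\Q}\to\GL(I^{\perp}))$, it fixes every vector of $I^{\perp}$, in particular $e$ and hence $e^{\ast}$; the translation is thus invisible to $e^{\ast}$ and $g(e^{\ast})=\zeta\,e^{\ast}$. As $g\in\widetilde{\U}(L)$ acts trivially on $A_L$, we obtain $(\zeta-1)e^{\ast}\in L$, that is
\[
\frac{\zeta-1}{\overline{\div(I)}}\,e\in L.
\]
Primitivity of $I=\OO_F e$ forces $\dfrac{\zeta-1}{\overline{\div(I)}}\in\OO_F$, and conjugating gives $\dfrac{1-\overline{\zeta}}{\div(I)}\in\OO_F$.

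Finally I would match $\frac{1-\overline{\zeta}}{\div(I)}$ with the stated quantities. For $\zeta=-1$ this is $2/\div(I)$. Over $\Q(\sqrt{-1})$ the number $1-\overline{\zeta}$ equals $1+\sqrt{-1}$ or $1-\sqrt{-1}$, and since $(1+\sqrt{-1})=(1-\sqrt{-1})$ as ideals the two cases reproduce the claimed divisibilities. Over $\Q(\sqrt{-3})$ one has $1-\overline{\omega}=1-\omega^{2}$, generating the same ideal as $1-\omega$, while $1-\overline{(-\omega)}=1+\omega^{2}=-\omega$ is a unit, matching the generator $1+\omega=-\omega^{2}$. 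I expect the only delicate bookkeeping to be this last comparison of generators up to units, together with the verification that the pairing ideal is exactly $\div(I)\OO_F$, which is precisely where the class number $1$ hypothesis is used; the conceptual core---that the unipotent part drops out on $e^{\ast}$, leaving a pure scalar constraint in the $e$-direction of the discriminant group---is immediate.
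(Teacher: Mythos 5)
Your proof is correct and is essentially the paper's argument: both feed a dual test vector to the element $\zeta T_{\sqrt{d}\lambda(e\otimes e)}\in\widetilde{\U}(L)$ supplied by the irregularity propositions and use that the discriminant kernel acts trivially on $A_L=L^{\vee}/L$, reducing everything to a scalar constraint in the $e$-direction. If anything, your version is tidier than the printed one: your test vector $e/\overline{\div(I)}$ visibly lies in $I^{\perp}\cap L^{\vee}$ and is honestly fixed by the translation $T_{\sqrt{d}\lambda(e\otimes e)}$, whereas the paper derives $2v\in L$ for $v$ orthogonal to $I$ and then substitutes $v=e'/\div(I)$, which is not in $I^{\perp}$ (evidently a slip for a vector along $e$); likewise your reading of the index-$3$ cases via $\omega T_{\sqrt{-3}\lambda(e\otimes e)}\in\Gamma$, consistent with the types $I_3$ and $SI_3$ in the classification, and your bookkeeping of the generators $1\pm\sqrt{-1}$, $1\pm\omega$ up to units (the only ambiguity in $\div(I)$) are exactly the intended content of the lemma.
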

\begin{proof}
(1) Assume $-T_{\sqrt{d}\lambda (e\otimes e)}\in\Gamma=\widetilde{\U}(L)$ for some $\sqrt{d}\lambda (e\otimes e)\in\sqrt{d}(\overline{I}\otimes I)(\Q)$.
Then, for any $v\in I^{\perp}\cap I^{\vee}$, we have
\[-T_{\sqrt{d}\lambda (e\otimes e)}(v)=-v\in v+L\]
because $-T_{\sqrt{d}\lambda (e\otimes e)}$ acts on the discriminant group of $L$ trivially.
This implies that $2v\in L$.
By substituting $v=e'/\div(I)$, we find that $2/\div(I)\in\OO_F$.
We can prove (2) and (3) similarly by calculating $\pm\sqrt{-1}T_{\sqrt{-1}\lambda (e\otimes e)}$ and $\pm\omega T_{\sqrt{-3}\lambda (e\otimes e)}$.

\end{proof}

\begin{lem}[c.f. {\cite[Lemma 4.2]{irregular}}]
\label{discriminant_kernel3}
Let $\widetilde{\U}(L)\subset\Gamma$.
Assume that the following holds for any  $\lambda\in F$; if $2\sqrt{d}\cdot\overline{\div(I)}\lambda$ is an element of $\OO_F$, then $\lambda$ is an element of $\Z$.
Then, $I$ is regular.
\end{lem}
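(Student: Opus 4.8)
The plan is to prove Lemma \ref{discriminant_kernel3} by contraposition: assuming $I$ is irregular, I will produce a $\lambda\in F\setminus\Z$ for which $2\sqrt{d}\l e',e\r\lambda\in\OO_F$, contradicting the hypothesis. Note that $\l e',e\r=\overline{\div(I)}$ up to a unit, so the quantity $2\sqrt{d}\l e',e\r\lambda$ is essentially $2\sqrt{d}\,\overline{\div(I)}\,\lambda$; I would first pin down this identification precisely using the normalization $\l e,e'\r=\div(I)$ fixed at the start of the section.

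\textbf{Extracting a denominator from irregularity.} The key input is Lemma \ref{discriminant_kernel2}: if $I$ is irregular (with any of the indices $2,3,4,6$), then a specific element of the form $u/\div(I)$ lies in $\OO_F$, where $u\in\{2,\,1\pm\sqrt{-1},\,1\pm\omega,\,1-\omega\}$ depending on the field and the index. The heart of the argument is that the inclusion $\widetilde{\U}(L)\subset\Gamma$ lets one run the computation of Lemma \ref{discriminant_kernel2} in reverse. Concretely, irregularity means some $\zeta\, T_{\sqrt{d}\lambda(e\otimes e)}\in\Gamma(I)_{\Z}$ for a root of unity $\zeta$ with $\zeta\neq\pm\id$ (or $\zeta=-\id$ with $-\id\notin\Gamma$), and evaluating this element on the discriminant group — exactly as in the proof of Lemma \ref{discriminant_kernel2}, substituting $v=e'/\div(I)$ — forces a nontrivial translation $T_{\sqrt{d}\lambda(e\otimes e)}$ with $\lambda\notin\Z$ to preserve $L$. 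This $\lambda$ is the candidate witness.

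\textbf{Closing the contradiction.} Having produced such a $\lambda\notin\Z$, I must verify that $2\sqrt{d}\l e',e\r\lambda\in\OO_F$. The computation is the same Hermitian-form bookkeeping that appears in Lemma \ref{discriminant_kernel2}: the condition that $T_{\sqrt{d}\lambda(e\otimes e)}$ (or its twist by $\zeta$) acts trivially on $A_L=L^\vee/L$ translates, via the formula $T_{e\otimes v}(z)=z+\l z,e\r v-\l z,v\r e-\tfrac12\l v,v\r\l z,e\r e$, into an integrality statement on the coefficient $\sqrt{d}\lambda\,\l\,\cdot\,,e\r$ paired against $L^\vee$, and choosing the test vector dual to $e'$ yields precisely $2\sqrt{d}\l e',e\r\lambda\in\OO_F$. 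Combined with $\lambda\notin\Z$, this contradicts the hypothesis, so $I$ must be regular.

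\textbf{Expected obstacle.} The main difficulty is purely organizational rather than conceptual: because the definition of ``irregular'' splits into the several cases catalogued in Section \ref{section:irregular} (indices $2,3,4,6$, plus the special-irregular variants, with the root of unity $\zeta$ differing across $\Q(\sqrt{-1})$, $\Q(\sqrt{-3})$, and the generic field), I expect to either invoke Lemma \ref{discriminant_kernel2} uniformly as a black box — reading off the relevant $u/\div(I)\in\OO_F$ and tracking that the induced $\lambda$ is genuinely non-integral — or else handle the root-of-unity twist carefully so that its contribution to the action on $A_L$ does not cancel the translation part. I would phrase the argument so that Lemma \ref{discriminant_kernel2} does the case-splitting work, leaving only the single clean deduction $2\sqrt{d}\l e',e\r\lambda\in\OO_F$ with $\lambda\notin\Z$.
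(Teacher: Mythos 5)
Your skeleton --- take an irregular witness $\zeta T_{\sqrt{d}\lambda(e\otimes e)}\in\Gamma(I)_{\Z}$, substitute $e'$, deduce $2\sqrt{d}\l e',e\r\lambda\in\OO_F$, and contradict the hypothesis --- is the paper's argument run contrapositively, but both mechanisms you cite for the two key steps fail under the lemma's actual hypothesis. First, you extract the integrality $2\sqrt{d}\l e',e\r\lambda\in\OO_F$ by ``evaluating on the discriminant group, exactly as in the proof of Lemma \ref{discriminant_kernel2}.'' That computation requires the witness to act trivially on $A_L=L^{\vee}/L$, which is available in Lemma \ref{discriminant_kernel2} only because there $\Gamma=\widetilde{\U}(L)$; Lemma \ref{discriminant_kernel3} assumes merely $\widetilde{\U}(L)\subset\Gamma$, and an element of a larger $\Gamma$ need not fix $A_L$ at all. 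So Lemma \ref{discriminant_kernel2} cannot be invoked here, neither as a black box (its hypothesis is $\Gamma=\widetilde{\U}(L)$, and its conclusion is a divisibility constraint on $\div(I)$, not the witness $\lambda$ you need) nor via its internal computation. The correct mechanism is simpler: $\zeta T_{\sqrt{d}\lambda(e\otimes e)}\in\Gamma\subset\U(L)(\Z)$ preserves $L$, hence so does $T_{\sqrt{d}\lambda(e\otimes e)}$, and applying it to $e'$ gives $T_{\sqrt{d}\lambda(e\otimes e)}(e')=e'+2\sqrt{d}\l e',e\r\lambda e\in L$; primitivity of $e$ then yields $2\sqrt{d}\l e',e\r\lambda\in\OO_F$ with no discriminant group involved --- this is exactly what the paper does.

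Second, and more seriously, your contradiction needs $\lambda\notin\Z$, which you assert (``forces a nontrivial translation \dots\ with $\lambda\notin\Z$'') but never derive: irregularity only supplies some $\lambda\in\Q$. The missing input is Lemma \ref{discriminant_kernel1}, which you never mention: since $\widetilde{\U}(L)\subset\Gamma$, one has $\sqrt{d}(\overline{I}\otimes I)(\Z)\subset U(I)_{\Z}$, so if $\lambda\in\Z$ then $T_{\sqrt{d}\lambda(e\otimes e)}\in U(I)_{\Z}\subset\Gamma$, whence $\zeta\id\in\Gamma$, contradicting the conditions defining (special-)irregularity (e.g.\ $-\id\notin\Gamma$ in the index-$2$ case). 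This is precisely where the hypothesis $\widetilde{\U}(L)\subset\Gamma$ does its work in the paper's proof, which runs as a reductio: the lemma's arithmetic hypothesis forces $\lambda\in\Z$, then Lemma \ref{discriminant_kernel1} puts $T_{\sqrt{d}\lambda(e\otimes e)}$ in $\Gamma$ and hence $\zeta\id\in\Gamma$, a contradiction. Your proposal attaches that hypothesis to Lemma \ref{discriminant_kernel2} instead, so as written the argument does not close. Once these two repairs are made, the case-splitting over the roots of unity $\zeta$ goes through uniformly, as you anticipate.
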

\begin{proof}
For a fixed $\div(I)$, we take an $e'\in L$ such that $\l e,e'\r=\div(I)$.
Now, we shall prove that we can take $e'$ to be an isotropic vector.

For simplicity, we only consider the case of $d\equiv 2,3\bmod 4$.
We assume $\l e',e'\r\neq 0$.
Let $f\defeq (p+q\sqrt{d})e+e'$ for some integers $p,q\in\Z$.
Note that $\l e,f\r=\div(I)$.
Then, since we have
\[\l e,e'\r=\div(I)=\frac{a+b\sqrt{d}}{2\sqrt{d}},\]
it follows that $\l f,f\r=0$ holds if and only if
\begin{align}
\label{eq:even}
    aq+bp=-\l e',e' \r.
\end{align}
Here, $-\l e',e'\r$ is in $\Z$ from the condition that $L$ is even.
On the other hand, by our assumption in lemma, the greatest common divisor of $a$ and $b$ is $1$ so that there exist some integers $p'$ and $q'$ that make the equation $(\ref{eq:even})$ hold.
Hence, it suffices to replace $e'$ with $(p'+q'\sqrt{d})e+e'$.
The same discussion holds for the case of $d\equiv 1\bmod 4$.
Below, we take $e'$ to be an isotropic vector.

First, suppose $I$ is irregular with index 2.
Equivalently, we can assume $-\id\not\in\Gamma$ and $-T_{\sqrt{d}\lambda (e\otimes  e)}\in\Gamma$. 
Since $T_{\sqrt{d}\lambda (e\otimes e)}$ preserves $L$, we have 
\[T_{\sqrt{d}\lambda (e\otimes e)}(e')=e'+2\sqrt{d}\l e',e\r\lambda e\in L.\]
By assumption, $\lambda\in\OO_F$ so that $\sqrt{d}\lambda (e\otimes e)\in \sqrt{d}(\overline{I}\otimes I)(\Z)$.
By Lemma \ref{discriminant_kernel1}, $T_{\sqrt{d}\lambda (e\otimes e)}\in U(I)_{\Z}$, so we obtain  $T_{\sqrt{d}\lambda (e\otimes e)}\in\Gamma$.
This implies $-\id\in\Gamma$, which is a contradiction.

We can give similar proofs for other irregular lattices $I$.
\end{proof}

For analysis of the structures of discriminant groups, we need some invariant decomposition theorem of finitely generated modules over a principal ideal domain.

\begin{prop}
\label{decomp}
Let $\OO$ be a principal ideal domain, $N$ be a finite module over $\OO$ and $p\neq0$ be a prime element in $\OO$.
We assume that an exact sequence
\[0\to\OO/p^m\to N\to \bigoplus_{i=1}^{s}(\OO/p^i)^{\oplus a_i}\to 0\]
exists for some non-negative integers $m,s,a_1,\dots,a_s\in\Z$.
Then, the isomorphism class of $N$ satisfying the above exact sequence corresponds to the pair $(i_0,\dots,i_k,m_0,\dots,m_k)$ such that 
\[\begin{cases}
i_0<\dots<i_k\\
a_{i_{\ell}}>0&(0<\ell\leq k)\\
a_{i_0}>0&(\mathrm{if}\ i_0>0)\\
m_0+\dots+m_k=m&(m_i>0\ \mathrm{for\ any}\ i)\\
0<m_{\ell}<i_{\ell+1}-i_{\ell}&(0\leq\ell<k).
\end{cases}\]
Moreover, 
\[N\cong\begin{cases}
    \OO/p^{m_0}\oplus\displaystyle{\bigoplus_{\ell=1}^k}\Bigl\{(\OO/p^{i_{\ell}})^{\oplus (a_{i_{\ell}}-1)}\oplus\OO/p^{m_{\ell}+i_{\ell}}\Bigr\}\oplus\bigoplus_{\substack{{j\neq i_t} \\\mathrm{for\ any\ }t}}(\OO/p^j)^{\oplus a_j}&(i_0=0)\\
     \displaystyle{\bigoplus_{\ell=0}^k}\Bigl\{(\OO/p^{i_{\ell}})^{\oplus (a_{i_{\ell}}-1)}\oplus\OO/p^{m_{\ell}+i_{\ell}}\Bigr\}\oplus\bigoplus_{\substack{{j\neq i_t}\\ \mathrm{for\ any\ }t}}(\OO/p^j)^{\oplus a_j}&(i_0>0).
\end{cases}\]

\end{prop}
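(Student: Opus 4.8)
The plan is to reduce the assertion to a purely combinatorial statement about partitions and then match that combinatorics to the stated tuples. The extension exhibits $N$ as built from $\OO/p^m$ and $Q\defeq\prod_{i=1}^{s}(\OO/p^i)^{a_i}$, both of which are finite and annihilated by a power of $p$; hence so is $N$, and the structure theorem for finitely generated modules over the principal ideal domain $\OO$ gives a (non-canonical) isomorphism $N\cong\prod_{j\ge1}(\OO/p^j)^{b_j}$. The isomorphism class of $N$ is therefore encoded by the multiplicities $(b_j)_j$, equivalently by the partition $\lambda=\lambda(N)$ in which $j$ occurs $b_j$ times. Writing $\nu=\lambda(Q)$ for the partition in which $i$ occurs $a_i$ times, the problem becomes: decide exactly which $\lambda$ admit a short exact sequence $0\to\OO/p^m\to N\to Q\to0$, and translate those $\lambda$ into the parameters $(i_0,\dots,i_k,m_0,\dots,m_k)$.

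First I would introduce the invariants $S_k(M)\defeq\dim_{\OO/p}(M/p^{k+1}M)$, which recover $\lambda(M)$ since $S_k(M)-S_{k-1}(M)$ is the number of parts of $\lambda(M)$ that are $\ge k+1$. Applying the right exact functor $M\mapsto M/p^{k+1}M$ to the sequence and using $S_k(\OO/p^m)=\min(k+1,m)$ gives the two-sided estimate $S_k(Q)\le S_k(N)\le S_k(Q)+\min(k+1,m)$ for every $k$; in particular $\nu\subseteq\lambda$ and $|\lambda|-|\nu|=m$. The decisive refinement is that the kernel of $N/p^{k+1}N\to Q/p^{k+1}Q$ is the image of the cyclic module $\OO/p^{\min(k+1,m)}$, hence cyclic, so the dimension jump $S_k(N)-S_k(Q)$ increases by at most one as $k$ grows. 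This is exactly the statement that $\lambda/\nu$ is a horizontal strip of size $m$ (at most one new box per column), which is the classical Pieri-type description of a module possessing a cyclic submodule with prescribed quotient; I would prove the sufficiency of this condition by an explicit construction, lifting generators of the summands of $Q$ and threading a single generator of $\OO/p^m$ through the enlarged summands.

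Next I would translate horizontal strips into tuples. Because at most one box may be added per column, at most one copy of each part value may be enlarged, and the enlargement of the distinct value $i_\ell$ to $i_\ell+m_\ell$ occupies the columns $i_\ell+1,\dots,i_\ell+m_\ell$; disjointness of these column ranges for successive enlarged values is precisely the interlacing $i_\ell+m_\ell<i_{\ell+1}$, i.e.\ $0<m_\ell<i_{\ell+1}-i_\ell$, while $\sum_\ell m_\ell=m$ records the size. The requirement $a_{i_\ell}>0$ (for $\ell\ge1$, and for $\ell=0$ when $i_0>0$) says that a copy of $\OO/p^{i_\ell}$ is actually present to be enlarged; the case $i_0=0$ is the degenerate one in which the strip creates a genuinely new smallest part $\OO/p^{m_0}$ rather than enlarging an existing summand. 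Reading off which summands survive, which single copy of $\OO/p^{i_\ell}$ is replaced by $\OO/p^{i_\ell+m_\ell}$, and treating $i_0=0$ separately, yields verbatim the two displayed formulas for $N$.

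Finally, the correspondence must be a bijection. For injectivity, the strict inequalities $i_\ell+m_\ell<i_{\ell+1}$ force the removed values $i_0<\dots<i_k$ and the created values $i_\ell+m_\ell$ to interleave into a single strictly increasing chain $i_0<i_0+m_0<i_1<i_1+m_1<\cdots$, so from $\lambda$ and the fixed $\nu$ one recovers the pairing, hence each $i_\ell$ and $m_\ell$, unambiguously; distinct tuples thus give non-isomorphic $N$. Surjectivity onto the admissible $\lambda$ is the construction already indicated. I expect the main obstacle to be the sufficiency half of the horizontal-strip characterization, namely producing for each admissible $\lambda$ an honest cyclic submodule $\OO/p^m\subset N$ with quotient $Q$: the lone generator of the cyclic piece must be spread across several enlarged summands so as to have order exactly $p^m$ while the quotient is exactly $Q$, and the careful separation of the new-part case $i_0=0$ from the enlargement cases is the remaining delicate bookkeeping.
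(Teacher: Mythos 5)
The paper states Proposition \ref{decomp} bare, with no proof at all (the text passes directly to the next subsection), so there is nothing of the author's to compare your argument against; your proposal must stand on its own, and in essence it does. Your reduction to the classical Pieri-type fact --- an exact sequence $0\to\OO/p^m\to N\to Q\to 0$ exists if and only if the partition $\lambda$ of $N$ contains the partition $\nu$ of $Q$ with $\lambda/\nu$ a horizontal strip of size $m$ --- is exactly the content of the proposition (note the displayed exponent $a_{i_{\ell}-1}$ in the paper is a typo for $a_{i_\ell}-1$, which you read correctly). Two small repairs are needed in your necessity step: $M/p^{k+1}M$ is not an $\OO/p$-vector space, so $S_k(M)$ should be the length $\ell_{\OO}(M/p^{k+1}M)$ (or $\dim_{\OO/p}p^kM/p^{k+1}M$); and the containment $\nu\subseteq\lambda$ does not follow from your two-sided estimate alone (domination of partial sums of conjugates is weaker than containment) but from monotonicity of $d_k\defeq S_k(N)-S_k(Q)=\ell\bigl(C/(C\cap p^{k+1}N)\bigr)$, the same cyclic-kernel refinement that gives $d_{k+1}-d_k\le 1$ once you add the one-line observation that $(C\cap p^{k+1}N)/(C\cap p^{k+2}N)$ is cyclic and killed by $p$.

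Two further points deserve care. In the translation step, column-disjointness of the enlargements forces only $i_\ell+m_\ell\le i_{\ell+1}$: the configuration $i_\ell+m_\ell=i_{\ell+1}$ is still a horizontal strip, and the strict inequality in the proposition is a normalization, since such a description re-encodes (at the level of the multiplicity differences $b_j-a_j\in\{-1,0,1\}$, whose partial sums from above lie in $\{0,1\}$) as a single longer enlargement; your interleaving-chain recovery of the tuple from $\lambda$ is precisely the uniqueness of this normal form, but the sentence identifying disjointness with the strict inequality conflates the two, and surjectivity onto admissible $\lambda$ should be argued by this alternation of signs, not by pointing back to the existence construction. Finally, in the sufficiency step the naive thread $(p^{i_0}g_0,\dots,p^{i_k}g_k)$ has order $p^{\max_\ell m_\ell}$, not $p^m$; one must take $x=(p^{c_\ell}g_\ell)_\ell$ with $c_\ell=i_\ell-(m_0+\cdots+m_{\ell-1})$, which is nonnegative exactly because $m_t<i_{t+1}-i_t$, and then a Smith normal form computation identifies $N/\langle x\rangle$ with $\prod_\ell(\OO/p^{i_\ell})\times(\text{untouched summands})$. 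This is the ``delicate bookkeeping'' you honestly flagged, and it is genuinely where the normal-form inequalities enter; with these adjustments your plan is correct and complete.
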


Below, we especially compute the case of class number 1 and discriminant kernels.
In the rest of this section, let $\Gamma=\widetilde{\U}(L)$.
Combining these calculation, it will be possible to narrow down the list of candidates of discriminant groups; see Appendix \ref{app:A} for the classification of $A_L$.

\subsection{Case of $\Q(\sqrt{-1})$}
Let $F=\Q(\sqrt{-1})$.
\begin{prop}
\label{-1_ab_kouho}
\begin{enumerate}
    \item If $I$ is irregular with index $2$, then $\div(I)=\pm1,\pm\sqrt{-1},   \pm1\pm\sqrt{-1}, \pm2, \pm2\sqrt{-1}$.
    \item If $I$ is semi-irregular with index $2$, then $\div(I)=\pm 1\pm\sqrt{-1}$.
    \item If $I$ is irregular with index $4$, then $\div(I)=\pm 1\pm\sqrt{-1}$.
\end{enumerate}
\end{prop}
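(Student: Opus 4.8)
The plan is to use the integrality conditions from Lemma~\ref{discriminant_kernel2} together with the concrete parametrization of $\div(I)$ given at the start of Section~\ref{discriminant_kernel_case}. Since $F=\Q(\sqrt{-1})$, we have $d=-1\equiv 3 \bmod 4$, so the notation reads $\div(I)=\frac{a+b\sqrt{-1}}{2\sqrt{-1}}$ for integers $a,b$ not both zero. First I would rewrite this in the standard form $\div(I)=\frac{-b+a\sqrt{-1}}{2}\cdot(\text{unit})$ by clearing the $\sqrt{-1}$ in the denominator (multiplying numerator and denominator by $-\sqrt{-1}$), so that $\div(I)=\frac{b-a\sqrt{-1}}{-2\sqrt{-1}}$ becomes $\frac{-a - b\sqrt{-1}}{2}$ up to the explicit computation; the upshot is that $\div(I)$ is a Gaussian number whose ``denominator'' is controlled by $2$. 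The key is that for each of the three irregularity types, Lemma~\ref{discriminant_kernel2} forces a specific element of $\OO_F=\Z[\sqrt{-1}]$ to be integral, and this pins down the finitely many possible norms (hence the finitely many $\div(I)$ up to units).

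For part (1), irregular with index $2$, Lemma~\ref{discriminant_kernel2}(1) gives $2/\div(I)\in\OO_F$, i.e. $\div(I)\mid 2$ in $\Z[\sqrt{-1}]$. Since $2=-\sqrt{-1}(1+\sqrt{-1})^2$ is (up to a unit) the square of the prime $1+\sqrt{-1}$, the divisors of $2$ up to units are exactly $1$, $1+\sqrt{-1}$, and $2$. Multiplying each by the four units $\pm 1,\pm\sqrt{-1}$ produces precisely the list $\pm 1,\pm\sqrt{-1},\pm 1\pm\sqrt{-1},\pm 2,\pm 2\sqrt{-1}$ claimed in (1). (Note $\pm 1\pm\sqrt{-1}$ already absorbs all four unit multiples of $1+\sqrt{-1}$, and $\pm 2,\pm 2\sqrt{-1}$ the unit multiples of $2$.) For parts (2) and (3) I would instead invoke Lemma~\ref{discriminant_kernel2}(2): special-irregular index $2$ forces $(1-\sqrt{-1})/\div(I)\in\OO_F$ and irregular index $4$ forces $(1+\sqrt{-1})/\div(I)\in\OO_F$. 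In both cases $\div(I)$ divides $1\pm\sqrt{-1}$, and since $1\pm\sqrt{-1}$ is a prime (of norm $2$) its divisors up to units are just $1$ and $1\pm\sqrt{-1}$ itself.

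The remaining point, and the one requiring genuine care rather than routine divisor-counting, is to \emph{exclude} $\div(I)=\pm 1,\pm\sqrt{-1}$ (the unit values) in parts (2) and (3). I would argue that if $\div(I)$ were a unit, then by Lemma~\ref{discriminant_kernel3} the integrality hypothesis ``$2\sqrt{d}\l e',e\r\lambda\in\OO_F\Rightarrow\lambda\in\Z$'' would be satisfied: when $\div(I)=\l e,e'\r$ is a unit, $2\sqrt{-1}\l e',e\r=2\sqrt{-1}\,\overline{\div(I)}$ is a unit times $2$, so $2\sqrt{-1}\l e',e\r\lambda\in\OO_F$ forces $2\lambda\in\OO_F$, and a short check of the possible classes of $\lambda$ in the relevant $\OO_F/2\OO_F$ quotient shows $\lambda\in\Z$ whenever the finer index-$4$ or special-index-$2$ condition is in play. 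By Lemma~\ref{discriminant_kernel3} this would make $I$ regular, contradicting the irregularity assumption. Hence the unit values are eliminated and only $\div(I)=\pm 1\pm\sqrt{-1}$ survives, as stated.

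I expect the main obstacle to be this exclusion step: one must confirm that the specific integrality condition of Lemma~\ref{discriminant_kernel3} really is triggered by a unit value of $\div(I)$, which amounts to checking that no $\lambda\in F\setminus\Z$ can satisfy the relevant congruence when $\div(I)$ is a unit. This is where the distinction between the weaker ``index $2$'' condition (where $\div(I)$ can be a unit, as in part (1)) and the stronger ``special-irregular / index $4$'' conditions (where it cannot) becomes visible, and the argument must be sensitive to which $\pm\sqrt{-1}$-twist of $T_{\sqrt{-1}\lambda(e\otimes e)}$ lies in $\Gamma$.
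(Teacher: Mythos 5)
Your two pillars are the right ones (Lemma \ref{discriminant_kernel2} for the necessary integrality, Lemma \ref{discriminant_kernel3} for pruning), and this is indeed the paper's architecture; but there is a genuine gap already in part (1). You read the condition $2/\div(I)\in\OO_F$ as ``$\div(I)$ divides $2$ in $\Z[\sqrt{-1}]$'', which presupposes $\div(I)\in\OO_F$. In fact $\div(I)$ only lies in $\partial\OO_F=\frac{1}{2\sqrt{-1}}\OO_F$ --- you observe the denominator $2$ yourself in your first paragraph and then drop it --- so the condition has the additional \emph{fractional} solutions $\div(I)=\pm\tfrac12,\ \pm\tfrac{\sqrt{-1}}{2},\ \tfrac{\pm1\pm\sqrt{-1}}{2}$, i.e. the pairs $(a,b)=(\pm1,0),(0,\pm1),(\pm1,\pm1)$ in the paper's enumeration: for instance $2\big/\tfrac{1+\sqrt{-1}}{2}=2(1-\sqrt{-1})\in\OO_F$. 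These are genuine candidates, and the paper disposes of them precisely by checking that for these pairs the hypothesis of Lemma \ref{discriminant_kernel3} holds (for $\lambda\in\Q$, $(a+b\sqrt{-1})\lambda\in\OO_F$ already forces $\lambda\in\Z$), so such $I$ is regular. Your part (1) never invokes Lemma \ref{discriminant_kernel3} at all, so the half-integral values are not excluded and the stated list is not established; your divisor count lands on the correct answer only because the candidates your integrality assumption hides are exactly the ones the missing elimination step removes. The same omission affects (2) and (3): the fractional candidates also pass the $(1\mp\sqrt{-1})/\div(I)$ test and need the same pruning, which the paper performs with ``the same calculation''.

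The second, more serious problem is your unit-exclusion mechanism in (2),(3). You propose that a unit value of $\div(I)$ triggers the hypothesis of Lemma \ref{discriminant_kernel3}; it does not. If $\div(I)$ is a unit then $2\sqrt{-1}\l e',e\r\lambda\in\OO_F$ only forces $\lambda\in\tfrac12\Z$ (the relevant $\lambda$ is rational, coming from the parametrization of $U(I)_{\Q}$), and $\lambda=\tfrac12$ is a direct counterexample to the hypothesis, so the lemma is simply inapplicable --- no check of classes in $\OO_F/2\OO_F$ can repair this, because the lemma's hypothesis is a statement about \emph{all} such $\lambda$ and it is false. That no argument through Lemma \ref{discriminant_kernel3} alone can work is confirmed by part (1) itself: $\pm1,\pm\sqrt{-1}$ occur in the index-$2$ list, so a unit $\div(I)$ is compatible with irregularity, and Lemma \ref{discriminant_kernel3} is insensitive to which twist $\pm\id,\pm\sqrt{-1}\id$ of $T_{\sqrt{-1}\lambda(e\otimes e)}$ lies in $\Gamma$. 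In the paper the units are excluded at the Lemma \ref{discriminant_kernel2} stage, via its asserted candidate list $(\pm1,0),(0,\pm1),(\pm1,\pm1),(\pm2,\pm2)$ for the special-irregular/index-$4$ cases; note that bare integrality of $(1-\sqrt{-1})/\div(I)$ would not visibly exclude the unit pairs $(\pm2,0),(0,\pm2)$ either (e.g. $(1-\sqrt{-1})/(-\sqrt{-1})=1+\sqrt{-1}\in\OO_F$), so what is really needed here is a finer analysis of the action of $\pm\sqrt{-1}\,T_{\sqrt{-1}\lambda(e\otimes e)}$ on the discriminant group, going beyond both lemmas as you use them. Your proposal correctly senses that the unit exclusion is the delicate point, but the argument offered for it would fail.
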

\begin{proof}
(1) We have
\[\frac{2}{\div(I)}=\frac{4(b+a\sqrt{-1})}{a^2+b^2}.\]
Hence, $2/\div(I)\in\OO_F$ implies $(a,b)=(\pm 1,0), (0,\pm 1), (\pm 1,\pm 1),  (\pm2,0)$,$(0,\pm2)$, $(\pm 2,\pm 2)$,  $(\pm4,0), (0,\pm4)$ and these pairs are the candidates for irregular $I$ with index 2 by Lemma \ref{discriminant_kernel2}.
On the other hand, 
\[2\sqrt{-1}\l e',e\r\lambda=(a+b\sqrt{-1})\lambda.\]
If $(a+b\sqrt{-1})\lambda\in\OO_F$ implies $\lambda\in\Z$, then $I$ is regular by Lemma \ref{discriminant_kernel3}.
In this case, the pairs $(\pm 1,0),  (0,\pm 1), (\pm 1,\pm 1)$ satisfy the condition in Lemma \ref{discriminant_kernel3}; that is, if $(a,b)$ is one of these pairs, then $I$ is regular.
Hence, from the above discussion, if $I$ is irregular, then $(a,b)=(\pm2,0), (0,\pm2),$ $(\pm 2,\pm 2), (\pm4,0),$ $(0,\pm4)$ so that $\div(I)=\pm1,\pm\sqrt{-1},   \pm1\pm\sqrt{-1}, \pm2, \pm2\sqrt{-1}$.

(2) We have
\[\frac{1-\sqrt{-1}}{\div(I)}=\frac{2(a+b)+2(a-b)\sqrt{-1}}{a^2+b^2}.\]
Hence, $(1-\sqrt{-1})/\div(I)\in\OO_F$ implies $(a,b)=(\pm1,0), (0,\pm 1),  (\pm1,\pm1), (\pm2,\pm2)$ and these pairs are the candidates for semi-irregular $I$ with index 2 by Lemma \ref{discriminant_kernel2}.
By performing the same calculation, if $I$ is irregular, then $(a,b)=(\pm 2,\pm 2)$ so that $\div(I)=(\pm 2\pm 2\sqrt{-1})/2=  \pm1\pm\sqrt{-1}$.

We can prove (3) in the same way. 
\end{proof}

\subsection{Case of $\Q(\sqrt{-3})$}
Let $F=\Q(\sqrt{-3})$.
See subsection \ref{anyimagirreg} for the (semi-)index 2 case.
\begin{prop}
\label{-3_ab_kouho}
\begin{enumerate}
    \item     If $I$ is $\mathrm{(}$semi-$\mathrm{)}$irregular with index $3$, then 
    \[\div(I)=\frac{2a+(1+\sqrt{-3})b}{2\sqrt{-3}}\]
    has the candidates listed in Table $\ref{irregular_table1}$.
    \item If $I$ is irregular with index $6$, then 
    \[\div(I)=\frac{2a+(1+\sqrt{-3})b}{2\sqrt{-3}}\]
    has the candidates listed in Table $\ref{irregular_table2}$.
\end{enumerate}

\end{prop}
\begin{proof}
These also follow from a direct calculation.
\end{proof}
\begin{table}[htbp]
    \centering
    \begin{tabular}[t]{|c||c|c|c|c|c|c|c|c|c|c|c|c|c|c|c|c|c|c|c|} \hline
      $a$&$-3$&$-3$&$-2$&$-1$&$-1$&$-1$&$-1$&$0$&$0$&$0$&$0$&$1$&$1$&$1$&$1$&$2$&$3$&$3$\\\hline
      $b$&$0$&$3$&$1$&$-1$&$0$&$1$&$2$&$-3$&$-1$&$1$&$3$&$-2$&$-1$&$0$&$1$&$-1$&$-3$&$0$\\ \hline
    \end{tabular}
        \caption{Candidates for $\div(I)$ for (semi-)irregular $I$ with index 3}
    \label{irregular_table1}
    \end{table}
    
    \begin{table}[hbtp]
    \centering
    \begin{tabular}[t]{|c||c||c|c|c|c|c|c|c|c|c|c|c|c|c|c} \hline
       $a$&$-2$&$-1$&$-1$&$-1$&$-1$&$0$&$0$&$1$&$1$&$1$&$1$&$2$\\ \hline
       $b$&$1$&$-1$&$0$&$1$&$2$&$-1$&$1$&$-2$&$-1$&$0$&$1$&$-1$\\ \hline
    \end{tabular}
    \caption{Candidates for $\div(I)$ for irregular $I$ with index 6}
    \label{irregular_table2}
\end{table}

\subsection{Other cases}
\label{anyimagirreg}
Let $F\neq\Q(\sqrt{-1})$ be an imaginary quadratic field with class number 1, that is, $F=\Q(\sqrt{-2})$, $\Q(\sqrt{-3})$, $\Q(\sqrt{-7})$, $\Q(\sqrt{-11})$, $\Q(\sqrt{-19})$, $\Q(\sqrt{-43})$, $\Q(\sqrt{-67})$, $\Q(\sqrt{-163})$.
Then, by performing a similar calculation to the one above, we can prove the following proposition by using  a computer.
\begin{prop}
\label{d_ab_kouho}
\begin{enumerate}
    \item Let $d\equiv 1\bmod 4$.
If $I$ is irregular with index $2$, then 
\[\div(I)=\frac{2a+(1+\sqrt{d})b}{2\sqrt{d}}\]
has the candidates listed in Table $\ref{irregular_table3}$.
\item Let $d=-2$.
If $I$ is irregular with index $2$, then 
\[\div(I)=\frac{a+b\sqrt{-2}}{2\sqrt{-2}}\]
has the candidates listed in Table $\ref{irregular_table4}$.
\end{enumerate}

\end{prop}
\begin{proof}
These also follow from a direct calculation.
\end{proof}

\begin{table}[h]
\centering
      \begin{tabular}[t]{|c|c||c|c|c|c|c|c|c|c|c|c|c|c|c|c|}
        \cline{1-1}
    $d$ \\ \hline
       \multirow{2}{*}{$-3$}&$a$&$-4$&$-2$&$-2$&$-2$&$-2$&$-2$&$-1$&$-1$&$-1$&$-1$&$0$&$0$&$0$&$0$\\ \cline{2-16}
        &$b$&$2$&$-2$&$0$&$1$&$2$&$4$&$-1$&$0$&$1$&$2$&$-2$&$-1$&$1$&$2$\\ \cline{2-16}
       &$a$&$1$&$1$&$1$&$1$&$2$&$2$&$2$&$2$&$2$&$4$\\ \cline{2-12}
        &$b$&$-2$&$-1$&$0$&$1$&$-4$&$-2$&$-1$&$0$&$2$&$-2$\\ \cline{1-16}
               \multirow{2}{*}{$-7$}&$a$&$-4$&$-3$&$-2$&$-2$&$-1$&$-1$&$-1$&$0$&$0$&$1$&$1$&$1$&$2$&$2$\\ \cline{2-16}
        &$b$&$1$&$-1$&$0$&$4$&$0$&$1$&$2$&$-1$&$1$&$-2$&$-1$&$0$&$-4$&$0$\\ \cline{2-16}
        &$a$&$3$&$4$\\ \cline{2-4}
        &$b$&$1$&$-1$\\ \cline{1-10}
               \multirow{2}{*}{$-11,-19,-43,-67,-163$}&$a$&$-2$&$-2$&$-1$&$-1$&$1$&$1$&$2$&$2$\\ \cline{2-10}
        &$b$&$0$&$4$&$0$&$2$&$-2$&$0$&$-4$&$0$\\ \cline{1-10}
    \end{tabular}
        \caption{Candidates for $\div(I)$ for irregular $I$ with index 2 and $d\equiv 1\bmod 4$}
    \label{irregular_table3}
    \end{table}

\begin{table}[h]

    \begin{tabular}[t]{|c||c|c|c|c|c|c|c|c|c|} \hline
       $a$&$2$&$-2$&$4$&$-4$&$0$&$0$&$0$&$0$\\ \hline
       $b$&$0$&$0$&$0$&$0$&$2$&$-2$&$4$&$-4$\\ \hline
    \end{tabular}
    \vspace{3mm}
    \caption{Candidates for $\div(I)$ for irregular $I$ with index 2 and $d=-2$}
    \label{irregular_table4}
\end{table}

\section{Ramification divisors and canonical singularities}
Now, we consider how irregular cusps affect the geometry of $\overline{\F_L(\Gamma)}$.
The essence of this section is due to \cite[section 7]{irregular}.
\begin{cor}
Let $I$ be a rank $1$ primitive isotropic sublattice of $L$.
Then, $I$ is an irregular with index $m$ if and only if the map $\overline{\mathcal{X}(I)}\to\overline{\mathcal{X}(I)}/\overline{\Gamma(I)_{\Z}}$ ramifies along the unique boundary divisor with ramification index $m$.
Moreover, if we take the quotient $\ZZ(I)^{\star}_{\Z}/\ZZ(I)_{\Z}$, then $\overline{D_L/\ZZ(I)_{\Z}^{\star}}\to\overline{\F_L(\Gamma)}$ does not ramify along the unique boundary divisor.
\end{cor}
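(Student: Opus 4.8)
The plan is to translate the algebraic characterisation of irregularity recorded in Propositions \ref{sp-2-irreg}--\ref{2-irreg} into a statement about ramification in the one-dimensional toric direction transverse to the boundary divisor. First I would recall the local structure near the cusp. Since $\dim_{\R}U(I)_{\R}=1$, the torus $T(I)=U(I)_{\C}/U(I)_{\Z}$ is a copy of $\C^{\times}$, and after the toric embedding $T(I)\hookrightarrow\overline{T(I)}\cong\C$ the space $\overline{\mathcal{X}(I)}$ is, near the unique boundary divisor, fibred over the base $D(I)'$ with transverse coordinate $t\in\overline{T(I)}$; the boundary divisor is cut out by $t=0$. Writing $t=\exp(2\pi\sqrt{-1}\,w)$ for a coordinate $w$ on $U(I)_{\C}$ normalised so that $U(I)_{\Z}$ is generated by $w\mapsto w+1$, an element $T_{\sqrt{d}\lambda(e\otimes e)}\in U(I)_{\Q}$ acts by $w\mapsto w+q$ for some $q\in\Q$, hence on the transverse coordinate by $t\mapsto\exp(2\pi\sqrt{-1}\,q)\,t$. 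Recall also that $F_I=\P(I\otimes_{\OO_F}\C)$ is a single point, as $I$ has rank $1$, so the base $D(I)'$ is just the $V(I)_{\C}$-fibre.

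The heart of the argument is to compute, for a generic point of the boundary divisor, its stabiliser in $\overline{\Gamma(I)}_{\Z}$ together with the induced action on $t$; the ramification index of $\overline{\mathcal{X}(I)}\to\overline{\mathcal{X}(I)}/\overline{\Gamma(I)}_{\Z}$ along the divisor is exactly the order of the image of this action in $\C^{\times}$. I would argue that an element fixes a generic point of the divisor precisely when it acts trivially on all of $D(I)'$, i.e. trivially on $V(I)_{\C}$; by the sequence (\ref{exact2}) this forces its image in $\U(I^{\perp}/I_{F})\times\GL(I_{F})$ to be a scalar $(\zeta\id_{I^{\perp}/I_{F}},\zeta\id_{I_{F}})$ with $\zeta$ a root of unity (so that the induced action $\overline{\zeta}\zeta=1$ on $V(I)_{\C}\cong\overline{I}_F\otimes I^{\perp}/I_F$ is trivial), and its $W$-part, mapping to $0$ in $V(I)_{\Q}$ by (\ref{exact3}), lies in $U(I)_{\Q/\Z}$. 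These are exactly the elements $\gamma$ singled out in condition $(3)$ of the relevant proposition. Since the scalar $\zeta$ acts trivially on $D_L\subset\P(L\otimes\C)$, the action of $\gamma$ on $t$ is induced solely by its unipotent translation part $T_{\sqrt{d}\lambda(e\otimes e)}$, and the requirement that $\gamma$ have order $m$ on $\mathcal{X}(I)$ while fixing $U(I)_{\Z}$ says exactly that this is a $1/m$-translation, whence $t\mapsto\zeta_m t$ for a primitive $m$-th root of unity $\zeta_m$. Thus the generic stabiliser maps onto $\mu_m$, and the quotient ramifies along the boundary with index $m$; conversely, if $I$ is regular then by the definition of regularity no such $\gamma$ exists, the generic stabiliser acts trivially on $t$, and no ramification occurs. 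Combining the two directions with Propositions \ref{sp-2-irreg}--\ref{2-irreg} yields the stated equivalence.

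For the final assertion I would factor the quotient as the tower
\[\overline{\mathcal{X}(I)}\xrightarrow{\,a\,}\overline{D_L/U(I)^{\star}_{\Z}}\xrightarrow{\,b\,}\overline{\F_L(\Gamma)},\]
which is legitimate because $U(I)_{\Z}\subset U(I)^{\star}_{\Z}\subset\Gamma(I)_{\Z}$, so that the ramification indices of $a$ and $b$ along the boundary divisor multiply to $m$. The group $G\defeq U(I)^{\star}_{\Z}/U(I)_{\Z}$ consists of roots of unity times elements of $U(I)_{\Q}$; the roots of unity act trivially on $D_L$, so $G$ acts on the transverse coordinate $t$ through a character $\chi\colon G\to\C^{\times}$ whose kernel is the subgroup of scalar classes (acting trivially on all of $\mathcal{X}(I)$) and whose image is precisely $\mu_m$, as one reads off directly from the type classification of $U(I)^{\star}_{\Z}/U(I)_{\Z}$ given in Section \ref{section:irregular}. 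Hence $a$ is ramified along the boundary with index $m$, which already accounts for the entire ramification index $m$ of the composite, and by multiplicativity $b$ is unramified along the boundary, as claimed.

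The step I expect to be the main obstacle is the precise identification of the transverse action: one must verify that the unipotent part $T_{\sqrt{d}\lambda(e\otimes e)}$ of the irregularity element induces rotation by a \emph{primitive} $m$-th root of unity, and dually that the generic-point stabiliser is exhausted by the $\gamma$-type elements modulo the scalars acting trivially on $D_L$. Both points reduce to the explicit matrix description of $U(I)_{\Q}$ and the adjoint action recalled in Section 2, together with the order computations already contained in Propositions \ref{sp-2-irreg}--\ref{2-irreg}. The special-irregular cases require the extra observation that the scalar factor raises the group-order of $\gamma$ above $m$ without changing the order of its action on $\mathcal{X}(I)$, so that the ramification index is governed by the latter; this is exactly why $G$ may be strictly larger than $\mu_m$ (for instance of type $SI_2$ or $SI_3$) while $a$ still ramifies with index only $m$.
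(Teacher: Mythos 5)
Your proposal is correct and takes essentially the same route as the paper: the first claim is exactly what condition (3) of Propositions \ref{sp-2-irreg}--\ref{2-irreg} encodes (your generic-stabilizer and transverse-rotation computation is the content of those propositions, combined with the identification of the unique boundary divisor with $V(I)_{\C}$), which is all the paper invokes. Your treatment of the second claim via the intermediate quotient by $U(I)^{\star}_{\Z}/U(I)_{\Z}$ and multiplicativity of ramification indices is likewise the argument the paper imports from Ma's orthogonal case \cite[Proposition 7.2(2)]{irregular}, here carried out explicitly.
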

\begin{proof}
The first claim follows from Propositions \ref{sp-2-irreg}, \ref{4-irreg}, \ref{sp-2-irreg'}, \ref{sp-3-irreg}, \ref{3-irreg} \ref{6-irreg} and \ref{2-irreg}, and the fact that the unique boundary divisor is $\V(I)_{\C}$.
The second claim follows in the same way as \cite[Proposition 7,2 (2)]{irregular}.
\end{proof}
\begin{rem}
Note that, in the adjoint case, Ma \cite{boundary} proved there is no branch divisor on the boundary of any toroidal compactification of modular varieties.
\end{rem}

Now, let us treat the canonical singularities on the boundary divisors on ball quotients.

\begin{prop}
\label{can_sing_boundary}
If $n\geq 13$ and $d<-3$, then the canonical toroidal compactification $\overline{\F_L(\Gamma)}$ has canonical singularities at the boundary points.
\end{prop}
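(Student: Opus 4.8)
The plan is to reduce the statement to a local computation at the boundary and then apply the Reid--Tai criterion. The key point is that $\overline{\F_L(\Gamma)}$ is locally a quotient of the toroidal model $\overline{\mathcal{X}(I)}$ by the finite group $\overline{\Gamma(I)}_{\Z}$ acting near the boundary divisor, together with the interior singularities coming from the action of $\Gamma$ on $D_L$ itself. Since we already know from \cite[Theorem 4]{Behrens} (cited in the Remark) and the standard theory that for $n\geq 13$ and $d<-3$ the interior singularities are canonical, the work is entirely at the cusps. So first I would set up, for a fixed rank $1$ primitive isotropic $I\subset L$, the local chart: near the boundary divisor the space $\overline{\mathcal{X}(I)}$ is smooth, and the relevant finite group acting is the stabilizer $\Gamma_I = \Gamma(I)_{\Z}/W(I)_{\Z}$ together with the torsion of $U(I)_{\Q/\Z}$ and $V(I)_{\Z}$, all acting on the local coordinates (the torus coordinate in the $T(I)$-direction plus the fiber coordinates of $V(I)_{\C}$ and the base $F_I$).

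\medskip

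Next I would make the Reid--Tai sum explicit. For a finite-order element $g$ acting on the local coordinates with eigenvalues $e^{2\pi i a_j}$, $a_j\in[0,1)$, the canonical (in fact terminal) condition for the quotient to have canonical singularities is $\sum_j a_j \geq 1$ for every non-identity $g$ whose fixed locus meets the relevant stratum. The crucial simplification, which the hypothesis $d<-3$ provides, is that the only roots of unity in $\OO_F^{\times}$ are $\pm 1$; hence the scalar automorphisms that could create small Reid--Tai sums — namely $\sqrt{-1}\id$ for $\Q(\sqrt{-1})$ and $\omega\id$ for $\Q(\sqrt{-3})$ — are absent, and the most dangerous torsion elements of high order do not occur. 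I would therefore enumerate the possible torsion elements of $\overline{\Gamma(I)}_{\Z}$ acting on the boundary chart, noting that their images in $\U(I^{\perp}/I_F)\times\GL(I_F)$ govern the eigenvalues on the $n-1$ fiber directions and the one torus direction. The boundary divisor itself sits in dimension $n$, so there are $n$ eigenvalues to sum; the condition $n\geq 13$ is exactly what forces $\sum_j a_j \geq 1$ once the number of directions contributing a nonzero rotation angle is large enough.

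\medskip

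The main obstacle, and the step I would spend the most care on, is controlling elements $g$ of small order (order $2$) whose action rotates only a few coordinates: for such elements the Reid--Tai sum can be as small as a few halves, so I must verify that the number of nonzero $a_j$ is never too small relative to $n$. Concretely, an involution acting as $-\id$ on $I^{\perp}/I_F$ contributes $a_j=1/2$ in each of the $n-1$ fiber directions, giving a sum $\geq (n-1)/2 \geq 1$ as soon as $n\geq 3$; the genuinely delicate cases are elements fixing the boundary divisor pointwise but rotating in the transverse torus direction together with a reflection in few fiber coordinates. Here I would use the fact that, after passing to $U(I)^{\star}_{\Z}$ and by the Corollary just proved, the transverse ramification has been absorbed, so the remaining quotient map $\overline{D_L/U(I)^{\star}_{\Z}}\to\overline{\F_L(\Gamma)}$ is unramified along the boundary divisor; thus any problematic element acts with a genuine rotation in at least two fiber directions, and the bound $n\geq 13$ comfortably yields $\sum_j a_j\geq 1$. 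I expect the bookkeeping of eigenvalues for each class of torsion element — matching the classification of Propositions \ref{sp-2-irreg}--\ref{2-irreg} against the Reid--Tai inequality — to be the technical heart of the argument, while the overall structure follows the orthogonal case in \cite[Section 7]{irregular} closely.
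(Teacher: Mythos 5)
Your plan contains the paper's one essential new idea --- absorbing the rotation transverse to the boundary divisor before testing canonicity --- but it then diverges and becomes far heavier than the actual proof. The paper's argument is two lines: if no irregular $I\subset L$ exists, canonicity at the boundary is exactly \cite[Theorem 4]{Behrens}; if an irregular $I$ exists, one has, as in \cite[Proposition 7.4]{irregular}, the isomorphism
\[
\overline{(D_L/U(I)_{\Z})}/\overline{\Gamma(I)_{\Z}}\cong \overline{(D_L/U(I)'_{\Z})}/(\Gamma(I)'_{\Z}/U(I)'_{\Z}),
\]
which replaces the irregular datum by a regular one (the same mechanism as your passage to $U(I)^{\star}_{\Z}$ via the unramifiedness corollary), and then Behrens applies again. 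The point you miss is that \cite[Theorem 4]{Behrens} is not an interior statement: it already proves canonical singularities at the boundary points of the toroidal compactification in the regular situation. Having used it only for the interior, you set out to reprove its boundary half by a Reid--Tai analysis, and that is where your proposal stops being a proof: the ``bookkeeping of eigenvalues for each class of torsion element'', which you defer, is precisely the content of Behrens' theorem, not a routine check.

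Two specific points in the deferred part are shaky. First, the claim that after absorbing the transverse rotation ``any problematic element acts with a genuine rotation in at least two fiber directions'' is not automatic: an element of $\Gamma(I)'_{\Z}$ can act on $V(I)_{\C}\cong\overline{I}_F\otimes I^{\perp}/I_F$ as a quasi-reflection in a single fiber coordinate (its image on $I^{\perp}/I_F$ being $\mathrm{diag}(-\mu,\mu,\dots,\mu)$ with scalar $\mu$ on $I_F$); such elements come from interior ramification divisors whose closures meet the boundary, are excluded from the Reid--Tai test, and require the separate treatment they receive in \cite{Behrens}. Relatedly, the fiber eigenvalues are twisted by the $I_F$-action, so ``$-\id$ on $I^{\perp}/I_F$'' alone does not determine them --- $-\id_L$ itself acts trivially on everything. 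Second, Reid--Tai gives canonical for $\sum_j a_j\geq 1$ and terminal for the strict inequality, so ``canonical (in fact terminal)'' is backwards. None of this makes your strategy unworkable --- the estimates you assert are true --- but as written the technical heart is missing, whereas the paper closes it with the displayed reduction plus a citation.
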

\begin{proof}
If there is no irregular primitive isotropic sublattice $I\subset L$, then the claim follows from \cite{Behrens}.
Otherwise, in the same way as \cite[Proposition 7.4]{irregular}, we have 
\[\overline{(D_L/\ZZ(I)_{\Z})}/\overline{\Gamma(I)_{\Z}}\cong \overline{(D_L/\ZZ(I)'_{\Z})}/(\Gamma(I)'_{\Z}/\ZZ(I)'_{\Z}).\]
The claim is proved combining this with \cite{Behrens}.
\end{proof}

\section{Low slope cusp form trick}
Let $\L\defeq\OO(-1)\vert_{D_L}$ and $\chi$ be a character of $\Gamma$.
A $\Gamma$-invariant section $\Psi$ of $\L^{\otimes k}\otimes\chi$ is called a modular form of weight $k$ with character $\chi$.
We consider $D_L$ as a Siegel domain of the third kind.
In our setting, for any rank 1 primitive isotropic sublattice $I\subset L$, the corresponding cusp $c_I$ is a point, so we will omit this in the Siegel domain of the third kind and consider $D_L\subset D(I)=\ZZ(I)_{\C}\times \V(I)_{\C}$.
Here, $z$ and $u=(u_1,\dots,u_{n-1})$ denote the local coordinates of $\ZZ(I)_{\C}$ and $\V(I)_{\C}$, respectively.
We take a nowhere vanishing section $s_I$ of $\L$ with respect to $I$ in the same way as in \cite{irregular}.
Then when we write $\Psi=f s_I^{\otimes k}\otimes 1$, the holomorphic function $f$ on $D_L$ satisfies the following modularity condition:
\[f(\gamma[v])=\chi(\gamma)j(\gamma,[v])^{\otimes k}f([v])\quad (\gamma\in\Gamma, [v]\in D_L)\]
where $j(\gamma,[v])$ is the automorphy factor.
We assume $\chi\vert_{\ZZ(I)_{\Z}}=1$ so that $f$ descends to a function on $D_L/\ZZ(I)_{\Z}$.
Then the Fourier expansion of $f$ is 
\[f(z,u)=\sum_{\rho\in \ZZ(I)^{\vee}_{\Z}}\varphi_{\rho}(u)\exp(2\pi\sqrt{-1}\l\rho,z\r).\]

For a generator $w_I$ of $\CC(I)$, we define the vanishing order $v_{I}(\Psi)$ as
\[v_I(\Psi)\defeq\mathrm{min}\{\l\ell,w_I\r\mid\ell\in \ZZ(I)^{\vee}_{\Z},\varphi_{\rho}(\ell)\neq 0\}.\]
Moreover, we define the geometric vanishing order $v_{I,\mathrm{geom}}(\Psi)$ as
\[v_{I,\mathrm{geom}}(\Psi)\defeq
\begin{cases}
v_I(\Psi) & (I:\mathrm{regular})\\
\frac{1}{m}v_I(\Psi) & (I:\mathrm{(semi}\mathchar`-\mathrm{)irregular}\ \mathrm{with}\ \mathrm{index}\ m).
\end{cases}
\]

Then, we can give these vanishing orders a geometrical interpretation.
\begin{prop}[{\cite[Proposition 8.4, 8.5, 8.6]{irregular}}]
\begin{enumerate}
    \item $v_I(\Psi)$ is the vanishing order of $\Psi$ over $\overline{\mathcal{X}(I)}$ along the unique boundary divisor $\V(I)_{\C}$.
    \item If $s_I^{\otimes k}\vert_{\ZZ(I)^{\star}_{\Z}}=1$, then $v_{I,\mathrm{geom}}(\Psi)$ is the vanishing order of $\Psi$ over $\overline{\mathcal{X}(I)'}$ along the unique boundary divisor $V(I)_{\C}'$.
        \item $\L^{\otimes n+1}\otimes\det\cong K_{\overline{\mathcal{X}(I)'}}+\V(I)'_{\C}$ over $\overline{\mathcal{X}(I)'}$.
\end{enumerate}
\end{prop}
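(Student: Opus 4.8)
The plan is to establish the three assertions by reducing each to the corresponding statement for the orthogonal case proved in \cite{irregular}, exploiting that the local geometry at a $0$-dimensional unitary cusp is formally identical to the local geometry of \cite{irregular} once we pass to the one-dimensional torus $T(I)$ and the line bundle $\L$ restricted to it.

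For assertion (1), I would unwind the definition of $v_I(F)$ and the Fourier expansion. The quotient $\mathcal{X}(I)=D_L/U(I)_{\Z}$ maps to $D(I)'$ as a principal $T(I)$-bundle, and since $\dim_{\R}U(I)_{\R}=1$ the torus $T(I)$ is one-dimensional, so the toric compactification $\overline{T(I)}$ adds a single boundary divisor, namely the fiber over the puncture, which descends to $V(I)_{\C}$ inside $\overline{\mathcal{X}(I)}$. Writing $q=\exp(2\pi\sqrt{-1}\l\rho_0,z\r)$ for the minimal $\rho_0$ and expanding $f$ in the coordinate $q$ with respect to a generator $w_I$ of $C(I)$, the order of vanishing of the extended section $F=f s_I^{\otimes k}\otimes 1$ along $V(I)_{\C}$ is precisely $\min\{\l\ell,w_I\r\mid \varphi_{\rho}(\ell)\neq 0\}=v_I(F)$, because $s_I$ is nowhere vanishing by construction. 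This is the direct transcription of \cite[Proposition 8.4]{irregular}.

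For assertion (2), the key point is that passing from $U(I)_{\Z}$ to the larger lattice $U(I)^{\star}_{\Z}$ rescales the torus coordinate. When $I$ is (special-)irregular with index $m$, the index $[U(I)^{\star}_{\Z}:U(I)_{\Z}]$ together with the explicit generators computed in Propositions \ref{sp-2-irreg}, \ref{4-irreg}, \ref{sp-2-irreg'}, \ref{sp-3-irreg}, \ref{3-irreg}, \ref{6-irreg} and \ref{2-irreg} shows that the boundary divisor $V(I)_{\C}$ maps to $V(I)'_{\C}$ with ramification index $m$, exactly as recorded in the Corollary preceding this Proposition. The hypothesis $s_I^{\otimes k}\vert_{U(I)^{\star}_{\Z}}=1$ guarantees that $F$ actually descends to a section over $\overline{\mathcal{X}(I)'}$; its order along $V(I)'_{\C}$ is then $v_I(F)/m=v_{I,\mathrm{geom}}(F)$, since the $q$-coordinate downstairs is an $m$-th root of the $q$-coordinate upstairs. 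I would follow \cite[Proposition 8.5]{irregular} verbatim here.

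For assertion (3), the isomorphism $\L^{\otimes n+1}\otimes\det\cong K_{\overline{\mathcal{X}(I)'}}+V(I)'_{\C}$ is an adjunction computation. I would first establish it over the open part $D_L$: the relative tangent bundle of $D_L\hookrightarrow\P(L\otimes_{\OO_F}\C)$ is $\L^{\vee}\otimes(\text{quotient})$, and taking determinants yields $K_{D_L}\cong\L^{\otimes -(n+1)}\otimes\det^{-1}$ by the standard identification of the canonical bundle of the $n$-dimensional ball with a power of $\OO(-1)$ twisted by the determinant character. Then I would extend across the boundary using the toroidal structure, where the extra factor $V(I)'_{\C}$ appears from the logarithmic pole picked up along the single boundary divisor in the one-dimensional torus embedding $T(I)\hookrightarrow\overline{T(I)}$. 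The main obstacle I anticipate is bookkeeping the determinant character $\det$ correctly under the descent to $\overline{\mathcal{X}(I)'}$, ensuring the normalization $s_I^{\otimes k}\vert_{U(I)^{\star}_{\Z}}=1$ is compatible with the trivialization used in the adjunction formula; this is precisely the place where the unitary case could differ from \cite[Proposition 8.6]{irregular}, but the one-dimensionality of $T(I)$ keeps the toric correction term to a single divisor, so the argument should transfer.
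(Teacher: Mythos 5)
Your proposal is correct and takes essentially the same route as the paper: the paper offers no independent argument for this Proposition, deducing all three parts directly from \cite[Propositions 8.4, 8.5, 8.6]{irregular} on the grounds that the local structure at a $0$-dimensional cusp (one-dimensional torus $T(I)$, Fourier expansion over $U(I)_{\Z}^{\vee}$, the index-$m$ covering $\overline{\mathcal{X}(I)}\to\overline{\mathcal{X}(I)'}$, adjunction on the ball) transfers verbatim from the orthogonal setting, which is precisely the reduction you carry out. Two small slips in your sketch are worth fixing, though neither affects the conclusions: in (2) the coordinate relation is inverted — since $U(I)'_{\Z}\supset U(I)_{\Z}$ has index $m$, the downstairs coordinate satisfies $q'=q^m$, so it is the upstairs coordinate that is an $m$-th root of the downstairs one, still giving order $v_I(F)/m$ along $V(I)'_{\C}$; and in (3) the exponent is misstated — since $\L=\OO(-1)\vert_{D_L}$ and $K_{\P^n}=\OO(-n-1)$, the open-part identification is $K_{D_L}\cong\L^{\otimes(n+1)}$ twisted by the determinant character, not $\L^{\otimes-(n+1)}\otimes\det^{-1}$, which is the dual of what the Proposition asserts and is internally inconsistent with your own remark that $K_{D_L}$ is a power of $\OO(-1)$.
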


The vanishing orders of canonical forms are measured in $\overline{\F_L(\Gamma)}$.
Now, the projection $\overline{\mathcal{X}(I)'}\to\overline{\F_L(\Gamma)}$ does not ramify, so we can measure the order of canonical forms by pulling back to $\overline{\mathcal{X}(I)'}$, i.e., for a modular form $\Psi$ of weight $(n+1)k$ and a corresponding $k$-canonical form $\omega_{\Psi}$, 
\[v_{\V(I)_{\C}}(\omega_{\Psi})=v_{\V(I)'_{\C}}(\pi^{\star}(\omega_{\Psi}))=v_{I, \mathrm{geom}}(\Psi)-k.\]
On the other hand, the projection $\overline{\mathcal{X}(I)}\to\overline{\mathcal{X}(I)'}$ ramifies with index $m$ if $I$ is (semi-)irregular with index $m$ so that 
\[v_{\V(I)_{\C}}(\omega_{\Psi})=\frac{1}{m}v_I(\Psi)-k.\]

\begin{prop} The $k$-canonical form corresponding to a modular form $\Psi$ of weight $(n+1)k$ extends holomorphically over the regular locus of $\overline{\F_L(\Gamma)}$ if and only if the following conditions hold:
    \begin{enumerate}
        \item $v_R(\Psi)\geq (r_i-1)k$ for every irreducible component $R_i$ of the ramification divisors $D_L\to\F_L(\Gamma)$ with ramification index $r_i$. 
        \item $v_I(\Psi)\geq k$ for every regular isotropic sublattice $I\subset L$.
        \item $v_I(\Psi)\geq m_Ik$ for every $\mathrm{(}$semi-$\mathrm{)}$irregular isotropic sublattice $I\subset L$ with index $m_I$.
\end{enumerate}
\end{prop}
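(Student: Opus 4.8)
The plan is to reduce the holomorphic extension of the $k$-canonical form $\omega_F$ over the regular locus of $\overline{\F_L(\Gamma)}$ to local vanishing-order inequalities along each prime divisor of $\overline{\F_L(\Gamma)}$, and then to match those inequalities with conditions (1)--(3). The divisors fall into two groups: the interior ramification divisors $R_i$ coming from the branch locus of $D_L \to \F_L(\Gamma)$ (fixed loci of reflections, with ramification index $r_i$), and the boundary divisors $V(I)_{\C}$ attached to the rank $1$ primitive isotropic sublattices $I \subset L$. A $k$-canonical form extends holomorphically over the regular locus exactly when its divisor is effective there, i.e.\ when $v_E(\omega_F)\geq 0$ for every prime divisor $E$, so I would verify this one divisor-type at a time.

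First I would treat the interior ramification divisors. Over such a divisor the local picture is a cyclic quotient by a reflection of order $r_i$, and the standard Reid--Tai / pullback computation (as used in \cite{Maeda1} and \cite{GHS}) gives that a modular form of arithmetic weight $(n+1)k$, i.e.\ of ordinary weight reflecting the isomorphism $\L^{\otimes(n+1)}\otimes\det\cong K$, yields a $k$-canonical form whose order along $R_i$ is $v_R(F)-(r_i-1)k$; demanding this be $\geq 0$ is precisely condition (1). Here I would use that pulling back the canonical form across a ramification of index $r_i$ contributes a discrepancy of $(r_i-1)k$, which is the source of the $(r_i-1)$ factor.

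Next I would handle the boundary divisors, and this is where the earlier machinery of the excerpt does the real work. For a regular cusp $I$ the map $\overline{\mathcal{X}(I)}\to\overline{\F_L(\Gamma)}$ is unramified along $V(I)_{\C}$, so by the displayed adjunction $\L^{\otimes n+1}\otimes\det\cong K_{\overline{\mathcal{X}(I)'}}+V(I)'_{\C}$ together with the identity $v_{V(I)_{\C}}(\omega_F)=v_{I,\mathrm{geom}}(F)-k$ from the preceding discussion, holomorphic extension along $V(I)_{\C}$ is equivalent to $v_I(F)=v_{I,\mathrm{geom}}(F)\geq k$, which is condition (2). For a $($special-$)$irregular cusp $I$ of index $m_I$ the projection $\overline{\mathcal{X}(I)}\to\overline{\mathcal{X}(I)'}$ ramifies with index $m_I$, so the displayed relation $v_{V(I)_{\C}}(\omega_F)=\tfrac{1}{m_I}v_I(F)-k$ shows that extension is equivalent to $\tfrac{1}{m_I}v_I(F)\geq k$, i.e.\ $v_I(F)\geq m_I k$, which is condition (3). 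The point is that the geometric vanishing order $v_{I,\mathrm{geom}}$ was defined precisely to absorb the factor $m_I$, so each boundary divisor contributes exactly one of the listed inequalities.

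The main obstacle I anticipate is bookkeeping the boundary adjunction correctly in the irregular case: one must be sure that the canonical form is measured on $\overline{\F_L(\Gamma)}$ itself (where the relevant branch divisor has already been quotiented out, by the Corollary stating that $\overline{D_L/U(I)^{\star}_{\Z}}\to\overline{\F_L(\Gamma)}$ is unramified along the boundary) rather than on the intermediate space $\overline{\mathcal{X}(I)}$, and that the two displayed formulas for $v_{V(I)_{\C}}(\omega_F)$ are applied to the right cover. Concretely, I would check that the hypothesis $s_I^{\otimes k}\vert_{U(I)^{\star}_{\Z}}=1$ needed to invoke part (2) of the preceding proposition is harmless, or reduce to it after passing to a suitable power, so that $v_{I,\mathrm{geom}}(F)$ genuinely computes the order on $\overline{\mathcal{X}(I)'}$. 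Once the divisor-by-divisor equivalences are established, the equivalence in the statement follows since $\omega_F$ extends over the whole regular locus if and only if it extends along every prime divisor meeting it.
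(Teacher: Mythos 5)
Your proposal is correct and follows essentially the same route as the paper: the paper's proof is the one-line instruction to combine the preceding discussion (the boundary vanishing-order formulas $v_{V(I)_{\C}}(\omega_F)=v_{I,\mathrm{geom}}(F)-k$ and $v_{V(I)_{\C}}(\omega_F)=\tfrac{1}{m}v_I(F)-k$, together with the adjunction $\L^{\otimes n+1}\otimes\det\cong K_{\overline{\mathcal{X}(I)'}}+V(I)'_{\C}$) with Ma's Corollary 8.8, and your divisor-by-divisor argument is exactly that combination written out, with the interior ramification divisors handled by the standard $(r_i-1)k$ discrepancy computation as in \cite{GHS}. Your added care about the hypothesis $s_I^{\otimes k}\vert_{U(I)^{\star}_{\Z}}=1$ and about measuring orders on $\overline{\F_L(\Gamma)}$ via the unramified cover $\overline{\mathcal{X}(I)'}$ is consistent with, and makes explicit, what the paper leaves implicit.
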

\begin{proof}
To conclude the proof, combine the above discussion and \cite[Corollary 8.8]{irregular}.
\end{proof}
\begin{thm}[Low slope cusp form trick]
\label{low_slope_trick_unitary}
Let $F$ be an imaginary quadratic field and $L$ be a Hermitian lattice of signature $(1,n)$ over $\OO_F$.
For a finite index subgroup $\Gamma\subset\U(L)(\Z)$, we assume that there is a non-zero cusp form $\Psi$ of weight $k$ with respect to $\Gamma$ on $D_L$.
In addition, we make the following assumptions.
\begin{enumerate}
    \item $v_R(\Psi)/k>(r_i-1)/(n+1)$ for every irreducible component $R_i$ of the ramification divisors $D_L\to\F_L(\Gamma)$ with ramification index $r_i$.
    \item $v_I(\Psi)/k>1/(n+1)$ for every regular isotropic sublattice $I\subset L$.
    \item $v_I(\Psi)/k>m_I/(n+1)$ for every $\mathrm{(}$semi-$\mathrm{)}$irregular isotropic sublattice $I\subset L$ with index $m_I$.
    \item $n\geq\mathrm{max}_{i,I}\{r_i-2,m_I-1\}$
    \item $\overline{\F_L(\Gamma)}$ has at worst canonical singularities.
\end{enumerate}
Then the ball quotient $\F_L(\Gamma)$ is of general type.
\end{thm}

\begin{rem}
By \cite[Theorem 4]{Behrens}, assumptions (4) and (5) are satisfied if $n\geq 13$ and $d<-3$.
\end{rem}

\begin{proof}
By taking some power of $\Psi$, we may assume that $\Psi$ has trivial character.
Note that $r_i$ is at most 6 by \cite[Corollary 3]{Behrens}.
First, let us assume that $k$ is not divisible by $n+1$.
Let $m\defeq\mathrm{max}_I\{m_I\}\leq 6$ and $r\defeq\mathrm{max}_i\{r_i\}\leq 6$.
By taking some power of $F$, since $n\geq\mathrm{max}\{r-2,m-1\}$, we may assume that
\[\frac{k}{n+1}\geq[\frac{k}{n+1}]+\frac{m-1}{m},\quad  \frac{k}{n+1}\geq[\frac{k}{n+1}]+\frac{r-2}{r-1}.\]
Then, for every ramification divisor with ramification index $r_i$ and every (semi-)irregular isotropic sublattice $I$ with index $m_I$, we have
\[[\frac{m_Ik}{n+1}]=m_I[\frac{k}{n+1}]+1,\quad [\frac{(r_i-1)k}{n+1}]=(r_i-1)[\frac{k}{n+1}]+1.\]
Hence, for $N_0\defeq[\frac{k}{n+1}]+1$, we have
\begin{enumerate}
    \item $v_R(\Psi)\geq (r_i-1)N_0$ for every irreducible component $R_i$ of the ramification divisors $D\to\F_L(\Gamma)$ with ramification index $r_i$.
    \item $v_I(\Psi)\geq N_0$ for every regular isotropic sublattice $I\subset L$.
    \item $v_I(\Psi)\geq m_IN_0$ for every (semi-)irregular isotropic sublattice $I\subset L$ with index $m_I$.
\end{enumerate}

Now we have 
\[V_{\ell}\defeq \Psi^{\ell}M_{((n+1)N_0-k)\ell}(\Gamma)\hookrightarrow M_{(n+1)N_0\ell}(\Gamma).\]
From the above discussion, any element in $V_{\ell}$ holomorphically extends the $\ell N_0$-canonical form over the regular locus of $\overline{\F_L(\Gamma)}$.
On the other hand, Behrens \cite[Theorem 4]{Behrens} showed the canonical singularities of $\overline{\F_L(\Gamma)}$.
Combining this result and Proposition \ref{can_sing_boundary}, we find that $\ell N_0$-canonical forms holomorphically extend over the desingularization of $\overline{\F_L(\Gamma)}$; that is, we can calculate the Kodaira dimension of $\F_L(\Gamma)$ using some desingularization of $\F_L(\Gamma)$.
By Hirzebruch's proportionality principle, the dimension of $V_{\ell}$ grows like   $\ell^{n+1}$ and hence $\F_L(\Gamma)$ is of general type.

Second, we assume that $k$ is divisible by $n+1$.
In this case, we can take $N_0$ in the above discussion to be $k/(n+1)$.
\end{proof}

\begin{rem}
\begin{enumerate}
\item One can construct a non-zero cusp form for $n<13$, which satisfies (1)-(4) in Theorem \ref{low_slope_trick_unitary}, by using a restriction of quasi-pull back of the Borcherds form for $F=\Q(\sqrt{-1}), \Q(\sqrt{-3})$.
    \item  It is known that unitary groups of unimodular Hermitian lattices have no reflections for $F\neq\Q(\sqrt{-1}), \Q(\sqrt{-3})$ \cite{MaedaOdaka, WW}.
Hence, if there exists a cusp form of weight less than $n+1$ which vanishes on irregular cusps with higher order, then $\F_L(\Gamma)$ is of general type in this situation.
\end{enumerate}

\end{rem}

\section{A ball quotient of non-negative Kodaira dimension}
\label{section:non_negative}
To prove that ball quotients are of general type, we need to construct a cusp form of low weight which vanishes on branch divisors with appropriate order by Theorem \ref{low_slope_trick_unitary}.
For the orthogonal modular varieties case, this was done by using Borcherds lift \cite{GHS, Kondo2, irregular}.
For the unitary case, it seems to be difficult to construct a low slope cusp form satisfying Theorem \ref{low_slope_trick_unitary} (5), by using unitary Borcherds lift \cite{Hofmann} because the Borcherds form exists on a 13-dimensional ball.
However, the existence of a cusp form with weaker conditions imposed implies that the Kodaira dimension is non-negative by Freitag's criterion \cite{Freitag}.
In this section, we shall construct a cusp form of canonical weight on a ball quotient and conclude that it has non-negative Kodaira dimension.
Note that in the notation of this paper, the canonical weight is $n+1$.

Let $L_{U\oplus U}$ be an even unimodular  Hermitian lattice of signature $(1,1)$ over $\OO_{\Q(\sqrt{-2})}$ defined by the matrix
\[
\frac{1}{2\sqrt{-2}}
\begin{pmatrix}
0 & 1 \\
-1 & 0 \\
\end{pmatrix}.
\]
Then its associated quadratic lattice $(L_{U\oplus U})_Q$ is $U\oplus U$.

Let $L_{E_8(-1)}$  be an even unimodular  Hermitian lattice of signature $(0,4)$ over $\OO_{\Q(\sqrt{-2})}$ defined by the matrix
\[
-\frac{1}{2}\begin{pmatrix}
2 & 0 & \sqrt{-2}+1&\frac{1}{2}\sqrt{-2}\\
0 & 2 & \frac{1}{2}\sqrt{-2} & 1-\sqrt{-2}\\
1-\sqrt{-2} & -\frac{1}{2}\sqrt{-2} & 2 & 0\\
-\frac{1}{2}\sqrt{-2}& \sqrt{-2}+1 & 0 & 2\\
\end{pmatrix}.
\]
Then its associated quadratic lattice $(L_{E_8(-1)})_Q$ is $E_8(-1)$.

Let $L_{\l-2\r\oplus\l-4\r}$ be an even unimodular Hermitian lattice of signature $(0,1)$ over $\OO_{\Q(\sqrt{-2})}$ defined by the matrix
\[(-1).\]
Then its associated quadratic lattice $(L_{\l-2\r\oplus\l-4\r})_Q$ is $\l-2\r\oplus\l-4\r$.
We define $L_{(\l-2\r\oplus\l-4\r)^{\perp}}$ be the orthogonal complement of $L_{\l-2\r\oplus\l-4\r}$ in $L_{E_8(-1)}$.
Let $L\defeq L_{U\oplus U}\oplus L_{E_8(-1)}\oplus L_{E_8(-1)}\oplus L_{(\l-2\r\oplus\l-4\r)^{\perp}}$ be a Hermitian lattice of signature $(1,12)$ over $\OO_{\Q(\sqrt{-2})}$ whose associated quadratic lattice is $U\oplus U\oplus E_8(-1)\oplus E_8(-1)\oplus(\l -2\r\oplus\l-4\r)^{\perp}$.

For $II_{2,26}\defeq U\oplus U\oplus E_8(-1)\oplus E_8(-1)\oplus E_8(-1)$, we embed $L_Q\hookrightarrow II_{2,26}$ by Nikulin's theorem.
On the Hermitian symmetric domain $\D_{II_{2,26}}$, there exists the Borcherds form $\Phi_{12}$, a modular form of weight 12 with respect to $\O^+(II_{2,26})$ with character $\det$.
This is obtained by using the Borcherds lift of the inverse of Ramanujan's tau function.

\begin{prop}
There exists a non-zero cusp form $\Psi_{13}$ of weight $13$ with respect to $\widetilde{\U}(L)$ with character $\det$.
\end{prop}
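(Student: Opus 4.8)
The plan is to obtain $F_{13}$ as the quasi-pullback of the Borcherds form $\Phi_{12}$ along the chain $D_L\hookrightarrow\D_{L_Q}\hookrightarrow\D_{II_{2,26}}$, reading its weight off the roots of the orthogonal complement $K\defeq(L_Q)^{\perp}$ of $L_Q$ in $II_{2,26}$. First I would identify $K$: from the given decompositions of $L_Q$ and $II_{2,26}$ one gets $K\cong\langle-2\rangle\oplus\langle-4\rangle$. Counting its $(-2)$-vectors, a vector $au+bv$ with $u^2=-2$, $v^2=-4$ has norm $-2$ exactly when $a^2+2b^2=1$, forcing $(a,b)=(\pm1,0)$; thus $K$ contains precisely the two roots $\pm u$, hence a single positive root. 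By the quasi-pullback construction of Borcherds and Gritsenko--Hulek--Sankaran \cite{GHS}, dividing $\Phi_{12}|_{\D_{L_Q}}$ by the linear form $(\,\cdot\,,u)$ yields a modular form $F$ on $\O^+(L_Q)$ whose weight is $12+1=13$, and which is a cusp form because the orthogonal complement contains a root.

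Next I would restrict $F$ to the ball via $\iota$ of \eqref{iota}, setting $F_{13}\defeq\iota^{\star}F$. Since $\iota^{\star}\L_{\D_{L_Q}}\cong\L_{D_L}$, this is again of weight $13$, and by the comparison of automorphy factors between $\O^+(L_Q)$ and $\U(L)$ worked out in \cite{Hofmann} and \cite{Maeda1} its character is the determinant character $\det$ of $\U(L)$. The non-vanishing of the restriction is in fact automatic: a point of $D_L$ spans a line in the maximal isotropic subspace $L\otimes_{\OO_F}\C\subset L_Q\otimes\C$, which is its own orthogonal complement, so no nonzero $\mu\in L_Q$ is orthogonal to all of $D_L$; hence $D_L$ lies in no Heegner divisor $\mu^{\perp}$ of $\div(F)$, and $F_{13}\neq0$ provided $F\neq0$. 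That $F\neq0$ follows from the standard criterion \cite{GHS}, since the only $(-2)$-vectors of $II_{2,26}$ orthogonal to $L_Q$ are the two lying in $K$.

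I would stress that weight $13=n+1$ together with character $\det$ is exactly the datum distinguished by the formula $\L^{\otimes n+1}\otimes\det\cong K$ recorded above, so that $F_{13}$ is literally a holomorphic section of the canonical bundle; this is the whole point of the example, and it is why the signature $(1,12)$ lattice and the weight-$12$ input $\Phi_{12}$ are chosen, the $+1$ weight bump from the pair of roots in $K$ bringing the weight to exactly $n+1$.

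The step I expect to be the main obstacle is pinning down the character, namely verifying that the orthogonal-to-unitary restriction lands on $\det$ and not on the trivial character. This is a genuine bookkeeping issue because the orthogonal determinant restricts trivially to the real points of $\U(L)$, so the determinant must instead be produced through the interaction of the weight with the complex scalars: evaluating the transformation law on the scalar matrices $\zeta\id$, $\zeta\in\OO_{\Q(\sqrt{-2})}^{\times}=\{\pm1\}$, one uses the coincidence $k=n+1=13$ to match $\zeta^{k}$ with $\det(\zeta\id)=\zeta^{n+1}$. The remaining point, that the cusp-form property survives restriction, I would handle through the cusp correspondence $I\mapsto I_Q$ of \cite{Hofmann}: the vanishing of $F$ along the one-dimensional cusp $I_Q$ forces the vanishing of $\iota^{\star}F$ at the corresponding zero-dimensional cusp $I$ of $D_L$.
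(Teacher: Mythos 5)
Your construction coincides with the paper's own proof: quasi-pullback of $\Phi_{12}$ along $L_Q\hookrightarrow II_{2,26}$ (the paper cites \cite[Theorem 8.2]{GHS4}; the complement has exactly two $(-2)$-vectors, so the weight is $12+2/2=13$ and cuspidality is automatic), followed by restriction along $\iota$. Your explicit identification $K\cong\langle-2\rangle\oplus\langle-4\rangle$ with the root count $a^2+2b^2=1$, and especially your verification that $\iota^{\star}f_{13}\not\equiv 0$ (the image of $D_L$ is open in the projectivization of the maximal isotropic subspace $L\otimes_{\OO_F}\C$, which is its own orthogonal complement and meets $L_Q\otimes\R$ only in $0$, so $D_L$ lies in no rational quadratic divisor $\mu^{\perp}$) are correct and go beyond the paper, which asserts that the restriction is a non-zero cusp form without argument. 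One slip of phrasing: $\Phi_{12}\vert_{\D_{L_Q}}$ is identically zero, since $\D_{L_Q}\subset u^{\perp}$; one divides $\Phi_{12}$ by $(\,\cdot\,,u)$ on $\D_{II_{2,26}}$ first and then restricts.

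The genuine gap is the character, and your proposed mechanism does not close it. As you yourself note (and as the paper records via $\U(L)\hookrightarrow\SO^+(L_Q)$), the orthogonal determinant pulls back trivially: for \emph{every} $g\in\widetilde{\U}(L)$, not only for scalars, one has $\det(\iota_{*}g)=N_{F/\Q}(\det\nolimits_{\C}g)=1$, and the image of $\widetilde{\U}(L)$ lies in the orthogonal discriminant kernel, so on the affine cone $\iota^{\star}f_{13}(gv)=\det(\iota_{*}g)\,\iota^{\star}f_{13}(v)=\iota^{\star}f_{13}(v)$. With the paper's definition of a modular form as a $\Gamma$-invariant section of $\L^{\otimes k}\otimes\chi$, this computation says the pullback has \emph{trivial} character. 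Testing the transformation law on the scalars $\zeta\id$, $\zeta\in\OO_F^{\times}=\{\pm1\}$, cannot upgrade this: a character of $\widetilde{\U}(L)$ is not determined by its values on scalar matrices, and for any hypothetical $g\in\widetilde{\U}(L)$ with $\det_{\C}(g)=-1$ the law you want, $F_{13}(gv)=\det(g)F_{13}(v)=-F_{13}(v)$, directly contradicts the cone computation above. Hence the assertion ``character $\det$'' amounts to the claim that $\det_{\C}$ is trivial on $\widetilde{\U}(L)$ for this particular $L$ (or must come from a genuinely finer comparison of automorphy factors); neither your argument nor, to be fair, the paper's --- whose proof of this point is the same one-line assertion --- supplies this, and it matters, because the character is precisely what makes $F_{13}$ a section of $\L^{\otimes 13}\otimes\det\cong K$ in the ensuing application of Freitag's criterion.
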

\begin{proof}
Since the complement of $L_Q$ in $II_{2,26}$ has exactly two  $(-2)$-vectors, by \cite[Theorem 8.2]{GHS4}, the quasi-pull back $f_{13}$ of $\Phi_{12}$ is a cusp form of weight 12+2/2=13 with respect to $\widetilde{\O}^+(L_Q)$ with character $\det$.
Then by restricting $f_{13}$ to $D_L$, we obtain a cusp form $\Psi_{13}\defeq\iota^{\star}f_{13}$ of weight 13 with respect to $\widetilde{\U}(L)$ with character $\det$ on a 12-dimensional ball $D_L$.
\end{proof}
Therefore, since the canonical bundle on $D_L$ is isomorphic to $\OO(-13)$, by Freitag's criterion \cite{Freitag}, 
 we have the following.
\begin{prop}
The ball quotient $\F_L(\widetilde{\U}(L))$ has non-negative Kodaira dimension.
\end{prop}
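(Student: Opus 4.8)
The plan is to read off non-negativity of the Kodaira dimension from the mere existence of the cusp form $F_{13}$ constructed in the preceding proposition. Since $L$ has signature $(1,12)$ we have $n=12$, so the weight $13$ of $F_{13}$ is exactly the canonical weight $n+1$. The first step is to invoke the correspondence $K\cong\L^{\otimes(n+1)}\otimes\det$ between canonical forms and modular forms recorded in the low slope discussion: a weight $(n+1)$ modular form carrying the character $\det$ is the same datum as a $\widetilde{\U}(L)$-invariant holomorphic top-degree form on $D_L$. Concretely, since $K_{D_L}\cong\OO(-13)$ and $F_{13}$ is a section of $\L^{\otimes 13}\otimes\det$, the form $F_{13}$ descends to a non-zero holomorphic canonical form $\omega$ on the smooth locus of the open variety $\F_L(\widetilde{\U}(L))$.

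The second and decisive step is to extend $\omega$ holomorphically to a smooth projective model. Here I would apply Freitag's criterion \cite{Freitag}: because $F_{13}$ is a cusp form it vanishes along every boundary divisor of the toroidal compactification $\overline{\F_L(\widetilde{\U}(L))}$, and this vanishing is precisely what is needed for $\omega$ to extend holomorphically across the boundary rather than acquire a pole there. The interior quotient singularities and branch divisors are controlled by the choice of the determinant character together with the canonical weight $n+1$, which is the content of Freitag's extension theorem for cusp forms of canonical weight. The upshot is a non-zero global holomorphic canonical form on a desingularization $\widetilde{X}$ of $\overline{\F_L(\widetilde{\U}(L))}$.

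Finally, the existence of one non-zero element of $H^0(\widetilde{X},K_{\widetilde{X}})$ shows that the geometric genus is positive, whence the Kodaira dimension of $\F_L(\widetilde{\U}(L))$ is non-negative. The main obstacle is the extension in the second step: one must ensure that no poles are introduced either at the cusps or along the interior ramification locus. The boundary poles are excluded exactly by the cusp-form hypothesis, while the interior behaviour is governed by Freitag's criterion in weight $n+1$ with character $\det$; checking that this criterion applies to the specific lattice $L$ of signature $(1,12)$ over $\OO_{\Q(\sqrt{-2})}$ is where the real content lies. I emphasize that, unlike the general type statement of Theorem \ref{low_slope_trick_unitary}, here a single canonical form already suffices, so no growth estimate on the space of forms is required and the numerical hypotheses $n\geq 13$, $d<-3$ of Proposition \ref{can_sing_boundary} need not be invoked.
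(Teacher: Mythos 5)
Your proposal is correct and follows essentially the same route as the paper, which likewise observes that $K_{D_L}\cong\OO(-13)$, so that the weight-$13$ cusp form $F_{13}$ with character $\det$ yields, via Freitag's criterion, a holomorphic canonical form on $\overline{\F_L(\widetilde{\U}(L))}$ and hence non-negative Kodaira dimension. Your additional remarks---that a single canonical form suffices and that the numerical hypotheses $n\geq 13$, $d<-3$ of the general-type machinery are never invoked (indeed they fail here, since $n=12$ and $d=-2$)---simply make explicit what is implicit in the paper's brief argument.
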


\section{Examples}
\label{section:examples}
In this section, we give, as examples, the irregular cusps with any branch indices for any imaginary quadratic fields with class number 1.

\subsection{Case of $\Q(\sqrt{-1})$}
Let $\eta\defeq 1+\sqrt{-1}$.
\begin{ex}
Let $a=2b+1$ be an integer with $b\geq0$ and $L$ be a Hermitian lattice of signature $(1,b+1)$ defined by 

\[\l-1\r^{\oplus b}\oplus
\begin{pmatrix}
0 & \eta^a \\
\overline{\eta}^a & 0 \\
\end{pmatrix}.
\]
Then, we have 
\[A_L\cong (\OO_{\Q(\sqrt{-1})}/\eta^2)^{\oplus b}\oplus(\OO_{\Q(\sqrt{-1})}/\eta^{a+2})^{\oplus 2}.\]
We put 
\[M\defeq\begin{pmatrix}
0 & \eta^a \\
\overline{\eta}^a & 0 \\
\end{pmatrix}.
\]
We take a generator $e_1,\dots,e_b$ of $\l-1\r^b$ and 
$v,w$ of $M$.
In other words, $\l e_i,e_j\r=-\delta_{ij}$ and $\l v,v\r=\l w,w\r=0$, $\l v,w\r=\eta^a$.
We define $A_v$ to be the subgroup of $A_{M}$ generated by $v/\eta^{a+2}$.

Now we take an isotropic vector
\[\ell\defeq e_1+\dots+e_b+v+w.\]

Let 
\[\Gamma\defeq\widetilde{\U}(L)^v\defeq\{g\in\U(L)(\Z)\mid g\vert_{A_v}=\id\}\] .
Then, we have
\[\begin{cases}
-\id\in\Gamma,\sqrt{-1}\id\not\in\Gamma&(a=-1)\\
-\id,\sqrt{-1}\id\not\in\Gamma&(a\geq 0).\\
\end{cases}\]

Now for $\lambda\defeq 1/2^{b+1}$, we can show 
\[-\sqrt{-1}T_{\lambda\sqrt{-1}(\ell\otimes\ell)}\in\Gamma\]
by our assumption on $a$ and $b$, that is, 
\[-\sqrt{-1}T_{\lambda\sqrt{-1}(\ell\otimes\ell)}(\frac{v}{\eta^{a+2}})=\frac{v}{\eta^{a+2}}\in A_v,\]
\[-\sqrt{-1}T_{\lambda\sqrt{-1}(\ell\otimes\ell)}(e_i)\in L,\  -\sqrt{-1}T_{\lambda\sqrt{-1}(\ell\otimes\ell)}(w)\in L.\]
Hence, $\ell$ defines an irregular sublattice of $L$ with index 4.
\end{ex}

\begin{ex}
Let  $L$ be a Hermitian lattice of signature $(1,3)$ defined by 

\[\l-\frac{1}{2}\r^{\oplus 2}\oplus
\begin{pmatrix}
0 & \frac{\overline{\eta}}{2} \\
\frac{\eta}{2} & 0 \\
\end{pmatrix}.
\]
Then we have 
\[A_L\cong (\OO_{\Q(\sqrt{-1})}/\eta)^{\oplus 2}.\]

We put 
\[M_1\defeq \l-\frac{1}{2}\r^{\oplus 2},\ M_2\defeq \begin{pmatrix}
0 & \frac{\overline{\eta}}{2} \\
\frac{\eta}{2} & 0 \\
\end{pmatrix}.\]

We take a generator $e,f$ of $M_1$ and $v, w$ of $M_2$.
We define $A_v$ to be the subgroup of $A_{L}$ generated by $v/\eta$.

Now we take an isotropic vector
\[\ell\defeq e+f+v+w.\]

Let 
\[\Gamma\defeq\widetilde{\U}(L)^v\defeq\{g\in\U(L)(\Z)\mid g\vert_{A_v}=\id\}.\]
We put $\lambda\defeq -1$.
Then, we have 
\[-\id\in\Gamma,\ \sqrt{-1}\id\not\in\Gamma,\ \sqrt{-1}T_{-\sqrt{-1}(\ell\otimes\ell)}\in\Gamma.\]
Hence, $\ell$ defines an semi-irregular sublattice of $L$ with index 2.
\end{ex}

\subsection{Case of $\Q(\sqrt{-3})$}
Let $\omega\defeq(-1+\sqrt{-3})/2$.
\begin{ex}
Let $L$ be a Hermitian lattice of signature $(1,2)$ defined by 

\[\l-1\r\oplus
\begin{pmatrix}
0 & \omega \\
\overline{\omega} & 0 \\
\end{pmatrix}.
\]
Then we have 
\[A_L\cong (\OO_{\Q(\sqrt{-3})}/\sqrt{-3})^{\oplus 3}.\]

We take a generator $e, v,w$ of $L$ with $\l e,e\r=-1$, $\l v,v\r=\l w,w\r=0$ and $\l v,w\r=\omega$.
We define $A_w$ to be the subgroup of $A_L$ generated by $w/\sqrt{-3}$.

Now we take an isotropic vector
\[\ell\defeq e+v+w.\]
Let 
\[\Gamma\defeq\widetilde{\U}(L)^w\defeq\{g\in\U(L)(\Z)\mid g\vert_{A_w}=\id\}.\]
Then, we have
\[\omega\id\not\in\Gamma.\]

Now for $\lambda\defeq -1/2$, we can show 
\[\omega T_{\lambda\sqrt{-3}(\ell\otimes\ell)}\in\Gamma,\ -\omega T_{\lambda\sqrt{-3}(\ell\otimes\ell)}\not\in\Gamma.\]
Hence, $\ell$ defines an irregular sublattice of $L$ with index 3.
\end{ex}

\begin{ex}
Let $L$ be a Hermitian lattice of signature $(1,4)$ defined by 

\[\l-1\r^{\oplus 3}\oplus
\begin{pmatrix}
0 & \frac{3+\sqrt{-3}}{2} \\
\frac{3-\sqrt{-3}}{2} & 0 \\
\end{pmatrix}.
\]
We have 
\[A_L\cong (\OO_{\Q(\sqrt{-3})}/\sqrt{-3})^{\oplus 3}\oplus (\OO_{\Q(\sqrt{-3})}/3)^{\oplus 2}.\]

We take a generator $e_1,e_2,e_3,v,w$ of $L$ with $\l e_i,e_j\r=-\delta_{ij}$, $\l v,v\r=\l w,w\r=0$ and $\l v,w\r=(3+\sqrt{-3})/2$.
We define $A_v$ to be the subgroup of $A_L$ generated by $v/3$.

Now we take an isotropic vector
\[\ell\defeq e_1+e_2+e_3+f+v+w.\]
Let 
\[\Gamma\defeq\widetilde{\U}(L)^v\defeq\{g\in\U(L)(\Z)\mid g\vert_{A_v}=\id\}.\]
Then, we have
\[-\id,\omega\id\not\in\Gamma.\]

Now for $\lambda\defeq -1/6$, we can show 
\[-\omega T_{\lambda\sqrt{-3}(\ell\otimes\ell)}\in\Gamma.\]
Hence, $\ell$ defines an irregular sublattice of $L$ with index 6.
\end{ex}

\subsection{General case}
In this subsection, let $F=\Q(\sqrt{d})$ be an imaginary quadratic field with $d\neq -1$ and  $\eta\defeq\sqrt{d}$.
\begin{ex}
Let $L$ be a Hermitian lattice of signature $(1,1)$ defined by 

\[
\begin{pmatrix}
0 & \eta \\
\overline{\eta} & 0 \\
\end{pmatrix}.
\]

We take a generator $v,w$ of $L$.
We define $A_v$ to be the subgroup of $A_{L}$ generated by

\[\begin{cases}
\frac{v}{2\eta^2}& (d\equiv 2,3\bmod 4) \\
\frac{v}{\eta^2}& (d\equiv 1\bmod 4) .
\end{cases}\]

Now we take an isotropic vector
\[\ell\defeq e+f+v+w.\]
Let 
\[\Gamma\defeq\widetilde{\U}(L)^v\defeq\{g\in\U(L)(\Z)\mid g\vert_{A_v}=\id\}.\]
Then, we have
\[-\id\not\in\Gamma\]
if $d\neq -1$.

Now for $\lambda\defeq -1/d$, we can show 
\[-T_{\lambda\sqrt{d}(\ell\otimes\ell)}\in\Gamma.\]
Hence, $\ell$ defines an irregular sublattice of $L$ with index 2.
\end{ex}

\appendix
\section{Classification of discriminant groups}
\label{app:A}
Below, for simplicity, we use the following concise notation for $\OO_F$-modules.
For $\eta_1,\eta_2\in\OO_F$ and $a,b,c,d\in\Z_{\geq 0}$, we write
\[a\cdot\eta^b\oplus c\cdot\eta^d\]
to denote the $\OO_F$-module
\[(\OO_F/\eta^b)^{\oplus a}\oplus (\OO_F/\eta^d)^{\oplus c}.\]

Here, we give the candidates for discriminant groups when the discriminant kernel may have irregular cusps, over any imaginary quadratic fields with class number 1.
We use the notations and assumptions in Section \ref{discriminant_kernel_case}.
Below, for each quantity $\div(I)$, we list possible candidates for $A_L$.
\subsection{Case of $\Q(\sqrt{-1})$}
\label{app:-1}
Let $\eta\defeq 1+\sqrt{-1}$ and $a,b$ be non-negative integers.
\subsubsection{$\mathbf{Index\ 2\ case}$}
Let $I$ be an irregular isotropic sublattice of $L$ with index 2 with respect to $\widetilde{\U}(L)$.
Then, by Proposition \ref{-1_ab_kouho}, we have $\div(I)\equiv 1$, $1+\sqrt{-1}$ or $2$ modulo  $\OO_{\Q(\sqrt{-1})}^{\times}$.
\newline

If $\div(I)\equiv 1$, the candidates are 
\[
a\cdot\eta\oplus b\cdot\eta^2\oplus\eta^{c},\quad 
a\cdot\eta\oplus b\cdot\eta^2\oplus\eta^{d_1}\oplus\eta^{d_2}\]
where $c=3,4,5,6$, $(d_1,d_2)=(3,3), (3,4), (3,5)$.
\newline

If $\div(I)\equiv 1+\sqrt{-1}$, the candidates are 
\[
a\cdot\eta\oplus b\cdot\eta^2\oplus\eta^{c},\quad
a\cdot\eta\oplus b\cdot\eta^2\oplus\eta^{d_1}\oplus\eta^{d_2}\]
where $c=4,5,6,7,8$, $(d_1,d_2)=(1,7), (3,3), (3,4), (3,5), (3,6), (3,7), (4,4),$ $(4,5),$ $(4,6), (5,5)$.
\newline

If $\div(I)\equiv 2$, the candidates are 
\[
a\cdot\eta\oplus b\cdot\eta^2\oplus\eta^{c},\quad  
a\cdot\eta\oplus b\cdot\eta^2\oplus\eta^{d_1}\oplus\eta^{d_2}
\]
where $c=6,7,8,9,10$, $(d_1,d_2)=(3,5), (3,6), (3,7), (3,8), (4,4), (4,5), (4,6)$, $(4,7),$ $(4,8), (5,5)$, $(5,6),$ $(5,7),$  $(6,6)$.

\subsubsection{$\mathbf{Semi\mathchar`-irregular\ with\ index\ 2\ or\ index\ 4\ case}$}
Let $I$ be a semi-irregular isotropic sublattice of $L$ with index 2 or irregular with index 4 with respect to $\widetilde{\U}(L)$.
Then, by Proposition \ref{-1_ab_kouho}, we have $\div(I)\equiv 1+\sqrt{-1}$ modulo  $\OO_{\Q(\sqrt{-1})}^{\times}$.
\newline

If $\div(I)\equiv 1+\sqrt{-1}$, the candidates are 
\[
a\cdot\eta\oplus\eta^{c},\quad 
a\cdot\eta\oplus\eta^{d_1}\oplus\eta^{d_2}\]
where $c=5,6,7$, $(d_1,d_2)=(1,6), (2,4), (2,5), (2,6), (3,3), (3,4), (3,5), (4,4)$.

\subsection{Case of $\Q(\sqrt{-2})$}
\label{app:-2}
Let $\eta\defeq\sqrt{-2}$ and $a,b$ be non-negative integers.
Let $I$ be an irregular isotropic sublattice of $L$ with index 2 with respect to $\widetilde{\U}(L)$.
Then, by Proposition \ref{d_ab_kouho} (2), we have $\div(I)\equiv 1/\sqrt{-2}$, $1$, $\sqrt{-2}$ or $2$ modulo  $\OO_{\Q(\sqrt{-2})}^{\times}$.
\newline

If $\div(I)\equiv 1/\sqrt{-2}$, the candidates are 
\[
a\cdot\eta\oplus b\cdot\eta^2\oplus\eta^{c},\quad 
a\cdot\eta\oplus b\cdot\eta^2\oplus\eta^{d_1}\oplus\eta^{d_2}\]
where $c=3,4,5,6$, $(d_1,d_2)=(3,3), (3,4), (3,5)$.
\newline

If $\div(I)\equiv 1$, the candidates are 
\[
a\cdot\eta\oplus b\cdot\eta^2\oplus\eta^{c},\quad
a\cdot\eta\oplus b\cdot\eta^2\oplus\eta^{d_1}\oplus\eta^{d_2}\]
where $c=4,5,6,7,8$, $(d_1,d_2)=(2,7), (3,3), (3,4), (3,5), (3,6), (3,7), (4,4), (4,5), (4,6)$, $(5,5)$.
\newline

If $\div(I)\equiv \sqrt{-2}$, the candidates are 
\[
a\cdot\eta\oplus b\cdot\eta^2\oplus\eta^{c},\quad 
a\cdot\eta\oplus b\cdot\eta^2\oplus\eta^{d_1}\oplus\eta^{d_2}\]
where $c=6,7,8,9,10$, $(d_1,d_2)=(3,5), (3,6), (3,7), (3,8), (4,4), (4,5), (4,6)$, $(4,7)$, $(4,8)$, $(5,5),$  $(5,6), (5,7),  (6,6)$.
\newline

If $\div(I)\equiv 2$, the candidates are 
\[
a\cdot\eta\oplus b\cdot\eta^2\oplus\eta^{c},\quad
a\cdot\eta\oplus b\cdot\eta^2\oplus\eta^{d_1}\oplus\eta^{d_2}\]
where $c=6,7,8,9,10,11,12$, $(d_1,d_2)=(1,11), (3,7), (3,8), (3,9), (3,10)$, $(3,11),$ $(4,6),$ $(4,7),$ $(4,8),$ $(4,9),$ $(4,10),$ $(5,5),$ $(5,6), (5,7), (5,8), (5,9), (6,6), (6,7), (6,8), (7,7)$.

\subsection{Case of $\Q(\sqrt{-3})$}
\label{app:-3}
Let $\eta\defeq\sqrt{-3}$, $\delta\defeq2$ and $a,b$ be  non-negative integers.
\subsubsection{$\mathbf{Index\ 2\ case}$}
Let $I$ be an irregular isotropic sublattice of $L$ with index 2 with respect to $\widetilde{\U}(L)$.
Then, by Proposition \ref{d_ab_kouho} (1), we have $\div(I)\equiv 1/\sqrt{-3}$, $1$, $2/\sqrt{-3}$ or $2$ modulo  $\OO_{\Q(\sqrt{-3})}^{\times}$.
\newline

If $\div(I)\equiv 1/\sqrt{-3}$, then $A_L$ is isomorphic to
$a\cdot\delta$
as $\OO_{\Q(\sqrt{-3})}$-modules.
\newline 

If $\div(I)\equiv 1$, the candidates are 
\[
a\cdot \delta\oplus\eta^2,\quad 
a\cdot\delta\oplus 2\cdot\eta.\]
\newline

If $\div(I)\equiv 2/\sqrt{-3}$, the candidates are 
\[
a\cdot\delta\oplus \delta^{c}, \quad
a\cdot\delta\oplus 2\cdot\delta^{2}\]
where $c=0,2,3$.
\newline

If $\div(I)\equiv2$, the candidates are 
\[a\cdot\delta\oplus\eta^{2},\quad
a\cdot\delta\oplus 2\cdot\eta^{2},\quad 
a\cdot\delta\oplus\delta^{c}\oplus\eta^{2},\quad
a\cdot\delta\oplus\delta^{c}\oplus 2\cdot\eta,\quad  
a\cdot\delta\oplus 2\cdot\delta^2\oplus\eta^2,\quad 
a\cdot\delta\oplus 2\cdot\delta^2\oplus 2\cdot\eta\]
where $c=2, 3$.

\subsubsection{$\mathbf{Index\ 3\ case}$}
Let $I$ be an irregular isotropic sublattice of $L$ with index 3 with respect to $\widetilde{\U}(L)$.
Then, by Proposition \ref{-3_ab_kouho} (1), we have $\div(I)\equiv 1/\sqrt{-3}$, $1$, $\sqrt{-3}$ modulo  $\OO_{\Q(\sqrt{-3})}^{\times}$.
\newline

If $\div(I)\equiv 1/\sqrt{-3}$, then $A_L$ is isomorphic to
$a\cdot \delta$
as $\OO_{\Q(\sqrt{-3})}$-modules.
\newline 

If $\div(I)\equiv 1$, the candidates are 
\[
a\cdot\eta\oplus b\cdot\eta^2\oplus\eta^{c},\quad
a\cdot\eta\oplus b\cdot\eta^2\oplus 2\cdot\eta^3\]
where $c=0, 3, 4$.
\newline

If $\div(I)\equiv\sqrt{-3}$, the candidates are 
\[a\cdot\eta\oplus b\cdot\eta^2\oplus\eta^{c},\quad 
a\cdot\eta\oplus b\cdot\eta^2\oplus 2\cdot\eta^3,\quad
a\cdot\eta\oplus b\cdot\eta^2\oplus\eta^3\oplus\eta^d\]
where $c=0, 3, 4, 5, 6$, $d=4,5$.

\subsubsection{$\mathbf{Index\ 6\ case}$}
Let $I$ be an irregular isotropic sublattice of $L$ with index 6 with respect to $\widetilde{\U}(L)$.
Then, by Proposition \ref{-3_ab_kouho} (2), we have $\div(I)\equiv 1/\sqrt{-3}$, $1$ modulo  $\OO_{\Q(\sqrt{-3})}^{\times}$.
\newline

If $\div(I)\equiv 1/\sqrt{-3}$, then $A_L$ is trivial, that is, $L$ is unimodular lattice.
\newline 

If $\div(I)\equiv\sqrt{-3}$, the candidates are 
\[
\eta^2,\quad
2\cdot\eta.\]

\subsection{Case of $\Q(\sqrt{-7})$}
\label{app:-7}
Let $\eta_1\defeq(1+\sqrt{-7})/2$, $\eta_2\defeq(-1+\sqrt{-7})/2$, 
$\delta\defeq\sqrt{-7}$ and $a,b$ be non-negative integers.
Let $I$ be an irregular isotropic sublattice of $L$ with index 2 with respect to $\widetilde{\U}(L)$.
Then, by Proposition \ref{d_ab_kouho} (1), we have $\div(I)\equiv 1/\sqrt{-7}$, $1$, $\eta_1/\sqrt{-7}$, $\eta_2/\sqrt{-7}$, $\eta_1\eta_2/\sqrt{-7}$, $\eta_1$, $\eta_2$ or $\eta_1\eta_2$ modulo $\OO_{\Q(\sqrt{-7})}^{\times}$.
\newline

If $\div(I)\equiv 1/\sqrt{-7}$, then $A_L$ is isomorphic to 
$a\cdot \eta_1\oplus b\cdot\eta_2$
 as  $\OO_{\Q(\sqrt{-7})}$-modules.
\newline

If $\div(I)\equiv 1$, the candidates are 
\[
a\cdot\eta_1\oplus b\cdot\eta_2\oplus \delta^2,\quad 
a\cdot\eta_1\oplus b\cdot\eta_2\oplus 2\cdot\delta.\]
\newline

If $\div(I)\equiv \eta_1/\sqrt{-7}$, the candidates are 
\[
(a-2)\cdot\eta_1\oplus a\cdot\eta_2\oplus 2\cdot\eta_1^2,\quad
(a-1)\cdot\eta_1\oplus a\cdot\eta_2\oplus\eta_1^3,\quad
a\cdot\eta_1\oplus a\cdot\eta_2\oplus \eta_1^2,\quad
(a+2)\cdot\eta_1\oplus a\cdot\eta_2.
\]
\newline

If $\div(I)\equiv \eta_2/\sqrt{-7}$, the candidates are 
\[
(a-2)\cdot\eta_2\oplus a\cdot\eta_1\oplus 2\cdot\eta_2^2,\quad
(a-1)\cdot\eta_2\oplus a\cdot\eta_1\oplus\eta_2^3,\quad
a\cdot\eta_2\oplus a\cdot\eta_1\oplus \eta_2^2,\quad
(a+2)\cdot\eta_2\oplus a\cdot\eta_1.
\]
\newline

If $\div(I)\equiv\eta_1$, the candidates are 
\[
(a-2)\cdot \eta_1\oplus a\cdot\eta_2\oplus 2\cdot\eta_1^2\oplus \delta^2,\quad
(a-2)\cdot \eta_1\oplus a\cdot\eta_2\oplus  2\cdot\eta_1^2\oplus 2\cdot\delta,\quad
(a-1)\cdot \eta_1\oplus a\cdot\eta_2\oplus\eta_1^3\oplus\delta^2,\]
\[(a-1)\cdot \eta_1\oplus a\cdot\eta_2\oplus\eta_1^3\oplus 2\cdot\delta,\quad
a\cdot \eta_1\oplus a\cdot\eta_2\oplus\eta_1^2\oplus 2\cdot\delta,\quad
a\cdot \eta_1\oplus a\cdot\eta_2\oplus\eta_1^2\oplus\delta^2,\]
\[(a+2)\cdot \eta_1\oplus a\cdot\eta_2\oplus\delta^2,\quad
(a+2)\cdot \eta_1\oplus a\cdot\eta_2\oplus 2\cdot\delta.\]
\newline

If $\div(I)\equiv\eta_2$, the candidates are 
\[
(a-2)\cdot \eta_2\oplus a\cdot\eta_1\oplus 2\cdot\eta_2^2\oplus \delta^2,\quad
(a-2)\cdot \eta_2\oplus a\cdot\eta_1\oplus  2\cdot\eta_2^2\oplus 2\cdot\delta,\quad
(a-1)\cdot \eta_2\oplus a\cdot\eta_1\oplus\eta_2^3\oplus\delta^2,\]
\[(a-1)\cdot \eta_2\oplus a\cdot\eta_1\oplus\eta_2^3\oplus 2\cdot\delta,\quad
a\cdot \eta_2\oplus a\cdot\eta_1\oplus\eta_2^2\oplus 2\cdot\delta,\quad
a\cdot \eta_2\oplus a\cdot\eta_1\oplus\eta_2^2\oplus\delta^2,\]
\[(a+2)\cdot \eta_2\oplus a\cdot\eta_1\oplus\delta^2,\quad
(a+2)\cdot \eta_2\oplus a\cdot\eta_1\oplus 2\cdot\delta.\]
\newline

If $\div(I)\equiv\eta_1\eta_2/\sqrt{-7}$, the candidates are 
\[
(a-2)\cdot\eta_1\oplus (a-2) \cdot\eta_2\oplus 2\cdot\eta_1^2\oplus 2\cdot \eta_2^2,\quad
(a-2)\cdot\eta_1\oplus (a-1) \cdot\eta_2\oplus 2\cdot\eta_1^2\oplus\eta_2^3,\quad
(a-2)\cdot\eta_1\oplus a \cdot\eta_2\oplus \eta_1^2\oplus 2\cdot \eta_2^2,\]
\[(a-2)\cdot\eta_1\oplus a \cdot\eta_2\oplus 2\cdot\eta_1^2\oplus \eta_2^2,\quad
(a-2)\cdot\eta_1\oplus (a+1) \oplus \eta_1^2\oplus\eta_2^3,\quad
(a-2)\cdot\eta_1\oplus (a+2) \oplus \eta_1^2\oplus \eta_2^2,\]
\[(a-1)\cdot\eta_1\oplus (a-2) \cdot\eta_2\oplus\eta_1^3\oplus 2\cdot \eta_2^2,\quad
(a-1)\cdot\eta_1\oplus (a-1)\cdot\eta_2\oplus\eta_1^3\oplus\eta_2^3,\quad
(a-1)\cdot\eta_1\oplus a \cdot\eta_2\oplus\eta_1^3\oplus \eta_2^2,\]
\[a\cdot\eta_1\oplus (a-2) \cdot\eta_2\oplus \eta_1^2\oplus 2\cdot \eta_2^2,\quad
a\cdot\eta_1\oplus (a-2) \cdot\eta_2\oplus 2\cdot\eta_1^2\oplus \eta_2^2,\quad
a\cdot\eta_1\oplus (a-1)\cdot\eta_2\oplus \eta_1^2\oplus\eta_2^3,\]
\[a\cdot\eta_1\oplus a \cdot\eta_2\oplus 2\cdot\eta_1^2,\quad
a\cdot\eta_1\oplus a \cdot\eta_2\oplus 2\cdot \eta_2^2,\quad
a\cdot\eta_1\oplus a \cdot\eta_2\oplus \eta_1^2\oplus \eta_2^2,\]
\[a\cdot\eta_1\oplus (a+1) \cdot\eta_2\oplus\eta_2^3,\quad
a\cdot\eta_1\oplus (a+2) \cdot\eta_2\oplus \eta_1^2,\quad
a\cdot\eta_1\oplus (a+2) \cdot\eta_2\oplus \eta_2^2,\]
\[(a+1)\cdot\eta_1\oplus (a-2) \cdot\eta_2\oplus\eta_1^3\oplus \eta_2^2,\quad
(a+1)\cdot\eta_1\oplus a \cdot\eta_2\oplus\eta_1^3,\quad
(a+2)\cdot\eta_1\oplus (a-2) \cdot\eta_2\oplus \eta_1^2\oplus \eta_2^2,\]
\[(a+2)\cdot\eta_1\oplus a \cdot\eta_2\oplus \eta_1^2,\quad
(a+2)\cdot\eta_1\oplus a \cdot\eta_2\oplus \eta_2^2,\quad
(a+2)\cdot\eta_1\oplus (a+2) \cdot\eta_2.
\]
\newline

If $\div(I)\equiv\eta_1\eta_2$, the candidates are 
\[
(a-2)\cdot\eta_1\oplus (a-2)\cdot \eta_2\oplus 2\cdot\eta_1^2\oplus 2\cdot \eta_2^2\oplus\delta^2,\quad
(a-2)\cdot\eta_1\oplus (a-2)\cdot \eta_2\oplus 2\cdot\eta_1^2\oplus 2\cdot \eta_2^2\oplus 2\cdot\delta,\quad
(a-2)\cdot\eta_1\oplus (a-1)\cdot\eta_2\oplus 2\cdot\eta_1^2\oplus\eta_2^3\oplus\delta^2,\]
\[(a-2)\cdot\eta_1\oplus (a-1)\cdot\eta_2\oplus 2\cdot\eta_1^2\oplus\eta_2^3\oplus 2\cdot\delta,\quad
(a-2)\cdot\eta_1\oplus a\cdot \eta_2\oplus 2\cdot\eta_1^2\oplus\eta_2^2\oplus\delta^2,\quad
(a-2)\cdot\eta_1\oplus a\cdot \eta_2\oplus 2\cdot\eta_1^2\oplus\eta_2^2\oplus 2\cdot\delta,\]
\[(a-2)\cdot\eta_1\oplus (a+2)\cdot \eta_2\oplus 2\cdot\eta_1^2\oplus\delta^2,\quad
(a-2)\cdot\eta_1\oplus (a+2)\cdot \eta_2\oplus 2\cdot\eta_1^2\oplus 2\cdot\delta,\quad
(a-1)\cdot \eta_1\oplus (a-2)\cdot \eta_2\oplus\eta_1^3\oplus 2 \cdot\eta_2^2\oplus\delta^2,\]
\[(a-1)\cdot \eta_1\oplus (a-2)\cdot \eta_2\oplus\eta_1^3\oplus 2 \cdot\eta_2^2\oplus 2\cdot\delta,\quad
(a-1)\cdot \eta_1\oplus (a-1)\cdot\eta_2\oplus\eta_1^3\oplus\eta_2^3\oplus\delta^2,\quad
(a-1)\cdot \eta_1\oplus (a-1)\cdot\eta_2\oplus\eta_1^3\oplus\eta_2^3\oplus 2\cdot\delta\]
\[(a-1)\cdot \eta_1\oplus a\cdot \eta_2\oplus\eta_1^3\oplus\eta_2^2\oplus\delta^2,\quad
(a-1)\cdot \eta_1\oplus a\cdot \eta_2\oplus\eta_1^3\oplus\eta_2^2\oplus 2\cdot\delta,\quad
(a-1)\cdot \eta_1\oplus (a+2)\cdot \eta_2\oplus\eta_1^3\oplus\delta^2\]
\[(a-1)\cdot \eta_1\oplus (a+2)\cdot \eta_2\oplus\eta_1^3\oplus 2\cdot\delta,\quad
a\cdot\eta_1\oplus (a-2)\cdot \eta_2\oplus\eta_1^2\oplus 2 \cdot\eta_2^2\oplus\delta^2,\quad
a\cdot\eta_1\oplus (a-2)\cdot \eta_2\oplus\eta_1^2\oplus 2 \cdot\eta_2^2\oplus 2\cdot\delta,\]
\[a\cdot\eta_1\oplus (a-1)\cdot\eta_2\oplus\eta_1^2\oplus\eta_2^3\oplus\delta^2,\quad
a\cdot\eta_1\oplus (a-1)\cdot\eta_2\oplus\eta_1^2\oplus\eta_2^3\oplus 2\cdot\delta,\quad
a\cdot\eta_1\oplus a\cdot \eta_2\oplus\eta_1^2\oplus\eta_2^2\oplus\delta^2,\]
\[a\cdot\eta_1\oplus a\cdot \eta_2\oplus\eta_1^2\oplus\eta_2^2\oplus 2\cdot\delta,\quad
a\cdot\eta_1\oplus (a+2)\cdot \eta_2\oplus\eta_1^2\oplus\delta^2,\quad
a\cdot\eta_1\oplus (a+2)\cdot \eta_2\oplus\eta_1^2\oplus 2\cdot\delta,\]
\[(a+2)\cdot\eta_1\oplus (a-2)\cdot \eta_2\oplus 2 \cdot\eta_2^2\oplus\delta^2,\quad
(a+2)\cdot\eta_1\oplus (a-2)\cdot \eta_2\oplus 2 \cdot\eta_2^2\oplus 2\cdot\delta,\quad
(a+2)\cdot\eta_1\oplus (a-1)\cdot\eta_2\oplus\eta_2^3\oplus\delta^2,\]
\[(a+2)\cdot\eta_1\oplus (a-1)\cdot\eta_2\oplus\eta_2^3\oplus 2\cdot\delta,\quad
(a+2)\cdot\eta_1\oplus a\cdot \eta_2\oplus\eta_2^2\oplus\delta^2,\quad
(a+2)\cdot\eta_1\oplus a\cdot \eta_2\oplus\eta_2^2\oplus 2\cdot\delta,\]
\[(a+2)\cdot\eta_1\oplus (a+2)\cdot \eta_2\oplus\delta^2,\quad
(a+2)\cdot\eta_1\oplus (a+2)\cdot \eta_2\oplus 2\cdot\delta.\]
\newline

\subsection{Other cases}
\label{app:d}
Let $F=\Q(\sqrt{d})$, where $d=-11$, $-19$, $-43$, $-67$ or $-163$, $\eta\defeq\sqrt{d}$,  
$\delta\defeq2$ and $a,b$ be non-negative integers.
Let $I$ be an irregular isotropic sublattice of $L$ with index 2 with respect to $\widetilde{\U}(L)$.
Then, by Proposition \ref{d_ab_kouho} (1), we have $\div(I)\equiv 1/\sqrt{d}$, $2/\sqrt{d}$, $1$ or $2$ modulo $\OO_{F}^{\times}$.
\newline

If $\div(I)\equiv 1/\sqrt{d}$, then $A_L$ is isomorphic to 
$a\cdot\delta\oplus\eta$
 as  $\OO_{F}$-modules.
\newline

If $\div(I)\equiv 2/\sqrt{d}$, the candidates are 
\[
a\cdot\delta\oplus\delta^c,\quad
a\cdot\delta\oplus 2\cdot\delta^2,\quad
a\cdot\delta\oplus\eta^2,\quad
a\cdot\delta\oplus 2\cdot\eta\]
where $c=0,2,3$.
\newline

If $\div(I)\equiv 1$, the candidates are 
\[
a\cdot\delta\oplus\eta^2,\quad
a\cdot\delta\oplus 2\cdot \eta.\]
 \newline

If $\div(I)\equiv 2$, the candidates are 
\[
a\cdot\delta\oplus\delta^c\oplus\eta^2,\quad
a\cdot\delta\oplus\delta^c\oplus 2\cdot\eta,\quad
a\cdot\delta\oplus 2\cdot\delta^2\oplus\eta^2,\quad
a\cdot\delta\oplus 2\cdot\delta^2\oplus 2\cdot\eta
\]
where $c=2,3$.

\subsection*{Acknowledgements}
The author would like to express his sincere gratitude to Shouhei Ma for helpful discussions and warm encouragement.
He also would like to thank Tetsushi Ito, his advisor, for his constructive suggestions, and Reimi Irokawa and Yuki Yamamoto for their assistance with the complicated calculations in the Appendix of this paper.
 Finally, he is deeply grateful to
the anonymous referees for helpful suggestions to make the exposition more
readable.

This work is supported by JST ACT-X JPMJAX200P.

\end{document}